\documentclass[11pt,leqno]{amsart}
\usepackage[a4paper,margin=1in]{geometry}
\usepackage{amsmath,amsthm,amscd,amssymb,amsfonts,amsbsy,mathtools}
\usepackage{thmtools}
\usepackage{latexsym}
\usepackage{xcolor}
\usepackage{exscale}
\usepackage{graphicx}
\usepackage{bbm}
\usepackage{enumerate}
\usepackage[utf8]{inputenc}
\usepackage[colorlinks,citecolor=blue,linkcolor=blue,hypertexnames=false]{hyperref}
\usepackage[capitalize,nameinlink]{cleveref} 
\usepackage{caption}

\newcommand{\nocontentsline}[3]{}
\newcommand{\tocless}[2]{\bgroup\let\addcontentsline=\nocontentsline#1{#2}\egroup}

\usepackage{tikz}
\usetikzlibrary{arrows.meta, positioning} 
\tikzstyle{ultrathin}=[line width=.5pt]
\tikzstyle{thiner}=[line width=1pt]
\tikzstyle{thin}=[line width=1.5pt]
\tikzstyle{fat}=[line width=2pt]
\tikzstyle{heavier}=[line width=3pt]
\tikzstyle{ultrafat}=[line width=4pt]
\tikzstyle{ultrafat-pt}=[line width=7pt]
\definecolor{myred}{HTML}{E53935}
\definecolor{myblue}{HTML}{1E88E5}
\definecolor{mygreen}{HTML}{43A047}
\definecolor{myyellow}{HTML}{FDD835}
\definecolor{myorange}{HTML}{FB8C00}
\definecolor{mygold}{HTML}{F9A825}
\definecolor{mypurple}{HTML}{8E24AA}
\definecolor{mygray}{HTML}{BDBDBD}
\definecolor{mybrown}{HTML}{6D4C41}
\definecolor{mynavy}{HTML}{1A237E}
\definecolor{mypink}{HTML}{ffbfca}
\definecolor{myseagreen}{HTML}{26A69A}
\definecolor{myviolet}{HTML}{f07ef0}
\definecolor{mydarkblue}{HTML}{0D47A1}
\definecolor{mydarkcyan}{HTML}{E0FFFF}
\definecolor{darkgray}{rgb}{0.66, 0.66, 0.66}
\definecolor{mydarkgreen}{HTML}{1B5E20}
\definecolor{mydarkmagenta}{HTML}{AD1457}
\definecolor{mydarkorange}{HTML}{EF6C00}
\definecolor{lightblue}{rgb}{0.68, 0.85, 0.9}
\definecolor{lightcyan}{rgb}{0.88, 1.0, 1.0}
\definecolor{lightgray}{rgb}{0.83, 0.83, 0.83}
\definecolor{mylightgreen}{HTML}{81C784}
\definecolor{lightyellow}{rgb}{1.0, 1.0, 0.88}
\definecolor{myshadow}{rgb}{0.5, 0.5, 0.5}
\definecolor{pink}{rgb}{1.0, 0.75, 0.8}
\definecolor{violet}{rgb}{0.93, 0.51, 0.93}
\definecolor{myauxcolor}{RGB}{245, 255, 255}
\definecolor{mylightgreen}{RGB}{193, 225, 159}

\usepackage[dvips]{epsfig}
\usepackage{graphicx}
\usepackage{subcaption} 
\usepackage[english]{babel}
\usepackage[thinc]{esdiff}

\usepackage{csquotes} 

\usepackage{enumerate}
\usepackage{enumitem}
\usepackage{multirow}

\usepackage[backgroundcolor=white,bordercolor=blue,linecolor=blue]{todonotes}

\newcounter{step}   
\crefname{step}{Step}{Steps}
\Crefname{step}{Step}{Steps}
\newcommand{\namedstep}[1]{%
	\item[\textbf{Step \the\numexpr\value{step}+1\relax: #1.}]%
	\refstepcounter{step}%
}

\crefformat{enumi}{#2#1#3}
\Crefformat{enumi}{#2#1#3}

\AddToHook{cmd/appendix/before}{%
	\crefalias{section}{appendix}%
	\crefalias{subsection}{appendix}
}

\let\oldtocsection=\tocsection
\let\oldtocsubsection=\tocsubsection
\let\oldtocsubsubsection=\tocsubsubsection
\renewcommand{\tocsection}[2]{\hspace{0em}\oldtocsection{#1}{#2}}
\renewcommand{\tocsubsection}[2]{\hspace{2em} \oldtocsubsection{#1}{\small{#2}}}
\renewcommand{\tocsubsubsection}[2]{\hspace{4em}\oldtocsubsubsection{#1}{\scriptsize{#2}}}

\calclayout
\allowdisplaybreaks

\numberwithin{equation}{section}

\declaretheorem[name=Theorem, numberlike=equation]{theorem}
\declaretheorem[name=Lemma, numberlike=equation]{lemma}
\declaretheorem[name=Corollary, numberlike=equation]{corollary}

\declaretheorem[name=Definition, style=definition, numberlike=equation]{definition}

\declaretheorem[name=Remark, style=remark, numberlike=equation]{remark}

\newcommand{\abs}[1]{\left\lvert #1 \right\rvert}

\newcommand{\norm}[1]{\left\lVert #1 \right\rVert}

\newcommand{\eps}{\varepsilon}

\newcommand{\NN}{{\mathbb{N}}}

\newcommand{\CC}{\mathbb{C}}
\newcommand{\R}{\mathbb{R}}

\renewcommand{\Re}{\operatorname{Re}}
\renewcommand{\Im}{\operatorname{Im}}

\renewcommand{\H}{{\mathcal{H}}}
\renewcommand{\S}{{\mathcal{S}}}
\newcommand{\tgamma}{{\widetilde{\gamma}}}
\newcommand{\tGamma}{{\widetilde{\Gamma}}}

\DeclareMathOperator{\diam}{diam}
\newcommand{\dist}{\operatorname{dist}}

\DeclareMathOperator{\supp}{supp}

\newcommand{\vertiii}[1]{{\left\vert\kern-0.15ex\left\vert\kern-0.15ex\left\vert #1
		\right\vert\kern-0.15ex\right\vert\kern-0.15ex\right\vert}}

\def\Xint#1{\mathchoice
	{\XXint\displaystyle\textstyle{#1}}%
	{\XXint\textstyle\scriptstyle{#1}}%
	{\XXint\scriptstyle\scriptscriptstyle{#1}}%
	{\XXint\scriptscriptstyle%
		\scriptscriptstyle{#1}}%
	\!\int}
\def\XXint#1#2#3{{\setbox0=\hbox{$#1{#2#3}{%
				\int}$ }
		\vcenter{\hbox{$#2#3$ }}\kern-.6\wd0}}
\def\barint{\,\Xint -} 
\def\bariint{\barint_{} \kern-.4em \barint}
\def\bariiint{\bariint_{} \kern-.4em \barint}
\renewcommand{\iint}{\int_{}\kern-.34em \int} 
\renewcommand{\iiint}{\iint_{}\kern-.34em \int} 

\title[Boundary regularity of harmonic functions in \texorpdfstring{$C^1$}{C1} slit domains]{Boundary regularity of\\ harmonic functions in \texorpdfstring{$C^1$}{C1} slit domains}
\author[J. Domingo-Pasarin, P. Hidalgo-Palencia, A. Martínez, and C. Torres-Latorre]{Joan Domingo-Pasarin, Pablo Hidalgo-Palencia, \\ Alejandro Martínez and Clara Torres-Latorre}

\address{Joan Domingo-Pasarin, Departament de Matemàtiques i Informàtica, Universitat de Barcelona, Gran Via de les Corts Catalanes 585, 08007 Barcelona, Spain}
\email{jdomingopasarin@ub.edu}

\address{Pablo Hidalgo-Palencia, Departament de Matemàtiques i Informàtica, Universitat de Barcelona, Gran Via de les Corts Catalanes 585, 08007 Barcelona, Spain}
\email{pablo.hidalgo@ub.edu}

\address{Alejandro Martínez, Departament de Matemàtiques i Informàtica, Universitat de Barcelona, Gran Via de les Corts Catalanes 585, 08007 Barcelona, Spain}
\email{amartinezsanchez@ub.edu}

\address{Clara Torres-Latorre, Instituto de Ciencias Matemáticas, Consejo Superior de Investigaciones Científicas, C/ Nicolás Cabrera, 13-15, 28049 Madrid, Spain}
\email{clara.torres@icmat.es}

\keywords{}
\subjclass[2020]{}

\begin{document}
	
	\allowdisplaybreaks
	
	\begin{abstract}
		We establish precise upper and lower estimates for harmonic functions vanishing on the slit of a $C^1$ slit domain, with no assumption that the slit lies in a hyperplane. The classical $\sqrt{d}$ growth near the edge, $d$ being the distance to the slit, persists in this generality, up to an explicit factor
		\begin{equation*}
			\exp\Big( \pm C \int_\rho^r \omega(s)\, \frac{ds}{s} \Big)
		\end{equation*}
		determined by the $C^1$-modulus of continuity $\omega$ of the slit, where $\rho < r$ are the two scales being compared. The upper estimates allow a right-hand side and non-zero boundary data. The correction factors remain bounded above and below by positive constants as $\rho\to0$ precisely when $\omega$ satisfies the Dini condition. Moduli of continuity beyond the Dini regime, as is the case for the logarithmic moduli arising at singular sets in relevant free boundary problems, were not covered by the previous $C^{1,\alpha}$ theory.
		
		Previously, the $\sqrt{d}$ growth was known for slits contained in a hyperplane, which additionally have a $C^{1,\alpha}$ edge (De Silva, Savin). For Lipschitz slits, there are boundary Harnack principles, but no growth rate is identified precisely.
		
		The main technical ingredient is a change of coordinates flattening a Lipschitz slit domain onto the model half-hyperplane slit, with quantitative estimates up to second order. The construction is geometric and does not use the equation, so we expect it to be useful for other boundary regularity problems.
		
	\end{abstract}

	\maketitle
	
	\setcounter{tocdepth}{1} 
	\tableofcontents

	\section{Introduction}
	
	Slit domains are domains whose boundary is a hypersurface \textit{with boundary} (the so-called slit), meaning the ambient space approaches the surface from both sides and meets at its $(n-2)$-dimensional edge (see \cref{fig:slit}). Harmonic functions vanishing on the slit are classically known to exhibit a precise $1/2$-homogeneous growth near the edge. However, the existing results that identify this precise growth rate assume that the slit lies in a fixed hyperplane, and additionally that the edge is of class $C^{1,\alpha}$ \cite{SilvaSavinBHSlit, Zhang}. Neither of these hypotheses reflects the geometries naturally produced by free boundary problems, which raises the questions: for which slits does this square root behavior survive? Which is the new behavior that appears otherwise?
	
	Indeed, the thin obstacle (or Signorini) problem, a very well known free boundary problem, is itself posed in a slit domain. The solution is constrained to lie above a hypersurface and is harmonic when it lies strictly above it. The coincidence set plays the role of the slit, and its boundary (the free boundary), that of the edge. In this setting, the slit is naturally curved. As for the edge, the free boundary is smooth near regular points, but at singular points the available bounds are logarithmic: every stratum is covered by $C^{1,\log}$ manifolds \cite{ColomboSpolaorVelichkov20}, i.e. $C^{1, \omega}$ manifolds with $\omega(t) = (\log |t^{-1}|)^{-\varepsilon}$ for $\varepsilon > 0$. Similarly, in the classical obstacle problem, for $n \geq 3$, the lower strata of the singular set are only $C^{1,\log}$ \cite{ColomboSpolaorVelichkov18GAFA, FigalliSerra17}. These moduli fall strictly outside the H\"older scale, so no $C^{1,\alpha}$ theory reaches them.
	
	The $C^1$ regime below the H\"older threshold is the borderline case for boundary behavior. Below this regularity, in general Lipschitz slit domains (for $n \ge 3$), the $1/2$ homogeneity fails and the growth exponent becomes dependent on the local geometry. The available boundary Harnack principles \cite{DS20, ClaraBoundaryHarnack}, and \cite{PetrosyanShi14} in the parabolic setting, yield H\"older estimates for quotients but do not identify the precise growth rate. Above the  $C^1$ threshold, one might instead attempt to trade curvature for variable coefficients by flattening the $C^{1,\alpha}$ slit onto a hyperplane, a regime in which variable coefficient results have recently become available \cite{Zhang}. But the trade is not free, and it does not cover every $C^{1,\alpha}$ slit. Flattening a curved slit tilts the normal direction, and the coefficients it produces fall outside the structural assumptions under which such results are proved. Flattening along the normal direction instead would preserve that structure, but this change of coordinates is only $C^{1,\alpha}$ when the slit is $C^{2,\alpha}$ \cite[Section 1.6]{Zhang}; the same loss of one derivative is observed in \cite[Section 1.6]{SilvaSavinThinOnePhase}.
	
	\begin{figure}[h]
		\centering
		\resizebox{0.5\textwidth}{!}{
			\begin{tikzpicture}[scale=0.8]
				
				\draw[dashed, ->] (4.421, 0.134) -- (11.195, 6.907);
				\draw[dashed, ->] (7.808, 0.134) -- (7.808, 8.036);
				\draw[dashed, ->] (2.164, 3.521) -- (12.324, 3.521);
				
				\filldraw[dash dot, fill=lightcyan, opacity=0.5] (4.745, 0.188) .. controls (5.117, 0.424) and (5.142, 0.876) .. (5.237, 1.092) .. controls (5.332, 1.308) and (5.497, 1.288) .. (5.772, 1.414) .. controls (6.047, 1.54) and (6.431, 1.813) .. (6.613, 2.077) .. controls (6.795, 2.342) and (6.775, 2.598) .. (6.882, 2.815) .. controls (6.99, 3.032) and (7.225, 3.21) .. (7.401, 3.321) .. controls (7.577, 3.432) and (7.692, 3.476) .. (8.009, 3.594) .. controls (8.325, 3.712) and (8.841, 3.903) .. (9.044, 4.179) .. controls (9.246, 4.454) and (9.135, 4.814) .. (9.159, 5) .. controls (9.183, 5.187) and (9.342, 5.201) .. (9.507, 5.232) .. controls (9.673, 5.263) and (9.844, 5.311) .. (10.011, 5.517) .. controls (10.178, 5.723) and (10.34, 6.087) .. (10.395, 6.309) .. controls (10.451, 6.531) and (10.4, 6.612) .. (10.462, 6.748) -- (10.462, 6.748) .. controls (10.178, 6.654) and (9.977, 6.771) .. (9.726, 6.796) .. controls (9.475, 6.821) and (9.176, 6.754) .. (8.996, 6.711) .. controls (8.817, 6.668) and (8.759, 6.648) .. (8.618, 6.685) .. controls (8.478, 6.723) and (8.256, 6.818) .. (8.009, 6.833) .. controls (7.763, 6.847) and (7.493, 6.781) .. (7.301, 6.768) .. controls (7.108, 6.754) and (6.993, 6.793) .. (6.9, 6.84) .. controls (6.808, 6.887) and (6.739, 6.942) .. (6.688, 6.979) .. controls (6.638, 7.017) and (6.606, 7.037) .. (6.614, 7.026) .. controls (5.927, 6.627) and (5.666, 6.041) .. (5.492, 5.714) .. controls (5.317, 5.387) and (5.227, 5.321) .. (5.018, 5.217) .. controls (4.809, 5.114) and (4.479, 4.972) .. (4.255, 4.772) .. controls (4.031, 4.572) and (3.911, 4.314) .. (3.783, 4.046) .. controls (3.655, 3.777) and (3.518, 3.499) .. (3.285, 3.29) .. controls (3.051, 3.082) and (2.722, 2.944) .. (2.485, 2.705) .. controls (2.249, 2.467) and (2.105, 2.129) .. (1.855, 1.899) .. controls (1.605, 1.669) and (1.249, 1.547) .. (1.019, 1.289) .. controls (0.789, 1.031) and (0.685, 0.637) .. (0, 0.32) .. controls (0.166, 0.316) and (0.414, 0.17) .. (0.624, 0.151) .. controls (0.833, 0.133) and (1.004, 0.241) .. (1.168, 0.25) .. controls (1.333, 0.26) and (1.492, 0.17) .. (1.704, 0.191) .. controls (1.915, 0.212) and (2.18, 0.344) .. (2.475, 0.331) .. controls (2.77, 0.318) and (3.097, 0.159) .. (3.473, 0.201) .. controls (3.848, 0.243) and (4.274, 0.487) .. (4.745, 0.188) -- cycle;
				
				\draw[fat] (4.739, 0.185) .. controls (5.112, 0.42) and (5.136, 0.873) .. (5.231, 1.089) .. controls (5.326, 1.305) and (5.492, 1.284) .. (5.767, 1.41) .. controls (6.042, 1.536) and (6.426, 1.809) .. (6.608, 2.074) .. controls (6.79, 2.338) and (6.769, 2.595) .. (6.877, 2.812) .. controls (6.984, 3.029) and (7.22, 3.206) .. (7.395, 3.318) .. controls (7.571, 3.429) and (7.687, 3.473) .. (8.003, 3.591) .. controls (8.319, 3.709) and (8.836, 3.9) .. (9.039, 4.175) .. controls (9.241, 4.451) and (9.129, 4.81) .. (9.153, 4.997) .. controls (9.177, 5.184) and (9.337, 5.197) .. (9.502, 5.228) .. controls (9.667, 5.259) and (9.839, 5.308) .. (10.005, 5.514) .. controls (10.172, 5.72) and (10.334, 6.084) .. (10.39, 6.306) .. controls (10.446, 6.528) and (10.395, 6.608) .. (10.456, 6.745);
				
				\node[anchor=center, font=\large] at (6.542, 1.313) {$\mathcal{E}$};
				\node[anchor=center, font=\large] at (8.236, 7.702) {$x_n$};
				\node[anchor=center, font=\large] at (11.927, 3.819) {$x_{n-1}$};
				\node[anchor=center, font=\large] at (11.256, 6.471) {$x''$};
				\node[anchor=center, font=\large] at (1.163, 0.556) {$\mathcal{S}$};
			\end{tikzpicture}
		}
		
		\caption{Illustration of a slit $\mathcal{S}$, and the associated slit domain $\Omega \coloneqq \R^n \setminus \S$. The most interesting behavior, from the PDE perspective, happens near the edge $\mathcal{E}$, which is the $(n-2)$-dimensional boundary of the hypersurface $\S$.}
		\label{fig:slit}
	\end{figure} 
	
	More fundamentally, in the non-Dini setting ($C^{1, \omega}$ with $\omega$ non-Dini) the previous flattening strategy breaks down. Indeed, flattening a curved $C^1$ slit produces continuous coefficients with a non-Dini modulus, a setting where we are not aware of any precise boundary regularity theory available for either flat or curved slits. 
	
	Our approach treats this endpoint directly. By constructing quantitative barriers that adapt to the curved slit at every scale, we bypass the previous flattening strategy and the need to deal with a variable coefficients equation. We establish upper and lower estimates showing that the classical square-root behavior in the distance to the slit persists in $C^1$ slit domains, up to an explicit correction factor determined by its modulus of continuity. In doing so, we cover the non-Dini regime that free boundary problems produce, and also give an independent proof of the square-root behavior in the $C^{1,\alpha}$ range.
	
	It is worth mentioning that slit geometries do not only appear in free boundary problems. They arise in fracture mechanics, where the square-root profile above is the crack-tip displacement \cite{Williams57}, in acoustic diffraction and elasticity theory \cite{Durand83,Krutitskii04,Antipov19,Marshall22}.
	
	\subsection{Main results}
	
	We first fix the geometry. Points of $\R^n$ are written 
	\begin{equation*}
		x = (x'', x_{n-1}, x_n) \in \R^{n-2}\times\R\times\R,
	\end{equation*} and we abbreviate $x' \coloneqq (x'', x_{n-1}) \in \R^{n-1}$. We consider slit domains of the form $\Omega = \R^n \setminus \S$, where the slit $\S$ is the graph
	\begin{equation} \label{eq:general_slit}
		\S \coloneqq \big\{ (x'', x_{n-1}, x_n) \in \R^{n-2}\times\R\times\R \ : \ x_{n-1} \leq \gamma(x''), \ x_n = \Gamma(x') \big\}
	\end{equation}
	of a function $\Gamma : \R^{n-1} \to \R$ over the subgraph of a function $\gamma : \R^{n-2} \to \R$, and
	\begin{equation} \label{eq:edge}
		\mathcal{E} \coloneqq \big\{ x_{n-1} = \gamma(x''), \ x_n = \Gamma(x') \big\}
	\end{equation}
	is its edge. 
	
	Upon translation we assume, without loss of generality, that $0 \in \mathcal{E}$, that is $\gamma(0) = 0$ and $\Gamma(0) = 0$. We will center our study of boundary behavior in this point.
	
	We call $\Omega$ a \emph{Lipschitz slit domain} with constant $L$ when $\gamma$ and $\Gamma$ are Lipschitz with constant $L$ (see \cref{def:lip_slit_domain}). We say that $\Omega$ has \emph{$C^1$-modulus $\omega$ at $0$}, for a modulus of continuity $\omega$, when in addition (see \cref{def:C1_slit_domain})
	\begin{equation} \label{eq:C1_modulus_informal}
		\norm{\nabla\gamma}_{L^\infty(B''_r)} \leq \omega(r)
		\qquad\text{and}\qquad
		\norm{\nabla\Gamma}_{L^\infty(B'_r)} \leq \omega(r)
		\qquad\text{for all small $r$}.
	\end{equation}
	In particular $\nabla\gamma(0) = 0$ and $\nabla\Gamma(0) = 0$, so that $\{x_n = 0\}$ is tangent to $\S$ at the origin. 
	
	As opposed to \cite{SilvaSavinThinOnePhase, SilvaSavinBHSlit, Zhang}, $\Gamma$ is not assumed to vanish, i.e., our slits are not flat (they are not contained in a hyperplane), and the edges $\mathcal{E}$ are curved $(n-2)$-dimensional surfaces.
	
	Our first main result is a lower bound on the growth of harmonic functions approaching the edge non-tangentially (see \cref{fig:cones}). It is a Hopf--Ole\u{\i}nik type statement, i.e., the solution grows at least like $\dist(\cdot,\S)^{1/2}$, as in the flat smooth case, up to a correction factor determined explicitly by $\omega$, whose form is explained in \cref{sec:motivation}. 
	
	\begin{theorem}[Hopf--Ole\u{\i}nik type lower bound] \label{th:lower_bound_thm}
		Let $\Omega \subset \R^n$ be a Lipschitz slit domain with $C^1$-modulus $\omega$ at $0$ in the sense of \cref{def:C1_slit_domain}. Let $u$ be a non-negative weak solution to the problem
		\begin{equation*}
			\left\{
			\begin{aligned}
				- \Delta u &= 0 &&\text{in } \Omega \cap B_1, \\
				u        &= 0 &&\text{on } \partial\Omega \cap B_1.
			\end{aligned}
			\right.
		\end{equation*}
		Fix $0 < \alpha_0 < \pi$ and $0 < \beta_0 < \pi/2$. For any unit vector $e'' \in \R^{n-2}$, and angles $|\alpha| \leq \alpha_0$ and $|\beta| \leq \beta_0$, let us define the unit $n$-dimensional vector
		\begin{equation} 
			\label{eq:cone_direction}
			e = e(\alpha, \beta) \coloneqq
			(e'' \sin \beta, \cos{\alpha} \cos{\beta}, \sin{\alpha} \cos{\beta}).
		\end{equation}
		Then, there exist $r_0 > 0$ depending only on $\omega$, $n$, $\alpha_0$ and $\beta_0$, and $C > 0$ depending only on $n$, $\alpha_0$ and $\beta_0$, such that for every $0 < \rho < r < r_0$,
		\begin{equation*}
			\frac{u(\rho e)}{\sqrt{\rho}} \geq \frac{1}{C} \frac{u(r e)}{\sqrt{r}} \exp\left(-C \int_{64\rho}^{64r} \omega(s) \frac{ds}{s} \right).
		\end{equation*}
	\end{theorem}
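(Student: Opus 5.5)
The plan is a dyadic iteration: prove a one-scale comparison, then chain it. At each scale $s \in [\rho, r]$, the slit inside $B_{64 s}$ is $C\omega(64 s)$-close in Lipschitz norm to a half-hyperplane $\{x_{n-1} \le 0,\ x_n = 0\}$, which we may take to be the tangent one at $0$. Call such a configuration $\delta$-flat. The key one-step statement is the following.

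\emph{Claim.} There are dimensional constants $C_1$ and $\delta_0>0$ such that, whenever $\Omega\cap B_{64s}$ is $\delta$-flat with $\delta\le\delta_0$,
\begin{equation*}
\frac{u(\tfrac{s}{2} e)}{\sqrt{s/2}} \ \ge\ (1 - C_1\delta)\,\frac{u(se)}{\sqrt{s}} .
\end{equation*}
This should hold uniformly for the directions $e$ allowed by $\alpha_0,\beta_0$. Note that $\beta_0<\pi/2$ keeps $e$ at a fixed angle from the edge, and $\alpha_0<\pi$ keeps it at a fixed angle from the slit.

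Granting the claim, set $s_k = 2^{-k} r$ and iterate down to $\rho$. The product $\prod_k (1 - C_1\omega(64 s_k))$ is bounded below by
\begin{equation*}
\exp\Big(-C\sum_k \omega(64 s_k)\Big) \ \ge\ \exp\Big(-C\int_{64\rho}^{64 r}\omega(s)\,\frac{ds}{s}\Big),
\end{equation*}
using that $\omega$ is monotone. Choose $r_0$ so that $\omega(64 r_0)\le\delta_0$; this makes $r_0$ depend on $\omega$, as stated. The factor $1/C$ absorbs the last incomplete dyadic step. That step is handled by a Harnack chain along the nontangential cone, together with comparability of $u$ at $\rho e$ and $s_k e$ for $s_k\in[\rho,2\rho]$.

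To prove the claim, rescale to $s=1$ and normalize $u(e)=1$. The model is $U_0 = \Re\sqrt{x_{n-1}+ix_n}$, i.e. $r^{1/2}\cos(\theta/2)$ in polar coordinates of the $(x_{n-1},x_n)$-plane. This function is exactly $1/2$-homogeneous, so the claim holds for it with equality. For the perturbed slit, the main step is to build, via the flattening change of coordinates announced in the abstract, a lower barrier $w$ with the following properties. It is subharmonic in $\Omega\cap B_{8}$ (or satisfies $-\Delta w\le 0$ up to an absorbable error), vanishes on $\S\cap B_8$ and is nonpositive on its boundary. It is at most $(1+C\delta)\,c\,U_0$ on $\partial B_{8}$ away from the slit, and it satisfies
\begin{equation*}
w(\tfrac12 e) \ \ge\ (1 - C\delta)\,2^{-1/2}\, c\, U_0(e).
\end{equation*}
Concretely, take $w = U_0\circ\Phi - C\delta\,\psi$, where $\Phi$ is the flattening map and $\psi$ corrects the Laplacian error. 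The flattening has second-order estimates with $|D^2\Phi|\lesssim\delta/|x|$, so $\Delta(U_0\circ\Phi) = O(\delta)\,|x|^{-3/2}$ near the edge. This error must be absorbed by a $1/2$-homogeneous supersolution-type correction, for instance $\psi = U_0^{\,1-\eta}$ or $\Re(z^{1/2})\log$-type terms, chosen to still vanish on the slit.

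To compare $u$ with $w$, first use Harnack and the boundary Harnack principle for Lipschitz slits \cite{DS20, ClaraBoundaryHarnack}. These give $u\ge c_0 U_0\circ\Phi$ on $\partial B_8$ up to a multiplicative constant, which fixes $c$ with $c\sim u(e)$. The maximum principle then gives $u\ge w$ in $B_8$. The crucial point is that this comparison gains only the factor $1-C\delta$ and not merely a constant. To secure this, run the comparison on the ratio $u/(U_0\circ\Phi)$ and iterate on oscillation, in the style of De Silva--Savin improvement of flatness.

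I expect the main obstacle to be exactly this sharpness: obtaining the multiplicative error $1 - C\delta$ at each scale rather than a fixed constant loss. Only then does the product telescope to the Dini integral. The plan is to use the exact homogeneity of $U_0$ on the model and to treat the slit perturbation as producing a right-hand side of size $\delta |x|^{-3/2}$, whose effect on $u/U_0$ at scale $1/2$ is $O(\delta)$ by a barrier estimate.
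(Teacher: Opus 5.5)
Your dyadic bookkeeping and the final Harnack-chain step are fine. The genuine gap is that the one-step Claim your iteration rests on is false, already for a flat slit. Take $n=2$ (or extend trivially in $x''$), $z=x_{n-1}+ix_n$, $w=\sqrt{z/3}$ and $u=\Re w+\Re(w^3)$.

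\begin{itemize}
\item Writing $w=|w|e^{i\phi}$ with $|\phi|<\pi/2$, one has $u=|w|\cos\phi\,\bigl(1+|w|^2(4\cos^2\phi-3)\bigr)\ge|w|\cos\phi\,(1-3|w|^2)\ge 0$ in $B_1$.
\item $u$ is harmonic off $\H$ and vanishes on $\H$.
\item The flat slit is $\delta$-flat for every $\delta>0$, so your Claim would force $u(\tfrac s2 e)/\sqrt{s/2}\ge u(se)/\sqrt s$.
\item Yet $u(te_{n-1})/\sqrt t=3^{-1/2}(1+t/3)$ is strictly increasing in $t$, so the inequality fails at every scale $s$.
\end{itemize}

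The reason is structural. The single number $u(se)$ does not control the non-$\tfrac12$-homogeneous modes of $u$ at the next scale. If, as in your rescaled proof of the Claim, only information in $B_{64s}$ is used, the coefficient of the $\Re(w^3)$ mode may be of order $s^{-1}$. The per-step defect is then a fixed absolute constant, independent of $\delta$. Your own construction has the same defect: the boundary Harnack constant $c_0$ is paid anew at each scale. Chaining point values with such losses over $\log_2(r/\rho)$ steps only gives $u(\rho e)/\sqrt\rho\gtrsim(\rho/r)^{c}\,u(re)/\sqrt r$, a loss in the exponent. ``Iterating on the oscillation of $u/(U_0\circ\Phi)$'' is named but not supplied, and it is exactly the missing step.

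The idea you need, which the paper uses, is to propagate a lower bound on a whole ball rather than at a point, and to pay the boundary Harnack constant only once.

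\begin{itemize}
\item Fix one global profile $h=\Re\sqrt{\zeta_1+i\zeta_2}$. Show that $h^{1+\eps}$ and $h^{1-\eps}$ are sub- and supersolutions in $\Omega\cap B_r$ for $\eps\approx\omega(10r)$.
\item Let $\varphi_k$ be the harmonic replacement of $h$ in $\Omega\cap B_{2^{-k}}$, so that $\|\varphi_k-h\|_{L^\infty}\lesssim\eps_k2^{-k/2}$.
\item Boundary Harnack at the top scale gives $u\ge c_{j_1}\varphi_{j_1}$ in $B_{2^{-j_1-1}}$, with $c_{j_1}=C^{-1}u(re)/\sqrt r$.
\item Each later step compares two explicit barriers: $\varphi_{k-1}\ge(1-A\eps_{k-1})\varphi_k$ in $B_{2^{-k-1}}$. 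This multiplicative bound up to the slit follows from the additive bound $\|\varphi_{k-1}-\varphi_k\|_{L^\infty(B_{2^{-k}})}\lesssim\eps_{k-1}2^{-k/2}$ via an almost-positivity lemma. That lemma says a harmonic function vanishing on the slit, bounded below by $-\mu_0$ and equal to $1$ at a fixed interior point, is nonnegative in the half ball.
\item Hence $u\ge c_{j_1}\prod_j(1-A\eps_j)\,\varphi_{j_2-1}$ on a ball. One concludes from $\varphi_k\ge(3\cdot2^{-k/2})^{-\eps_k}h^{1+\eps_k}$ and $h(\rho e)\ge\kappa\sqrt\rho$.
\end{itemize}

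A secondary point concerns your Hessian bound. For a map sending a merely $C^1$ slit exactly onto $\H$, the Hessian is of order $\delta/\dist(\cdot,\S)$, not $\delta/|x|$; the latter cannot hold up to the slit. So the error in $\Delta(U_0\circ\Phi)$ blows up at the slit, not only at the edge. It is absorbable because $h\,\abs{\Delta h}\lesssim\delta\,\abs{\nabla h}^2$, which is precisely what makes $h^{1\pm\eps}$ sub- and supersolutions.
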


	\begin{remark}[The case $n = 2$] \label{rmk:n_equals_2}
		For $n = 2$ there is no unit vector $e'' \in \R^{n-2}$. In that case \eqref{eq:cone_direction} is to be read with $\beta = 0$, so that $e = e(\alpha) = (\cos\alpha, \sin\alpha)$ and only the restriction $|\alpha| \leq \alpha_0$ remains; the slit \eqref{eq:general_slit} reduces to the curve $\{x_1 \leq 0,\ x_2 = \Gamma(x_1)\}$ and its edge to the single point $0$. The statement and its proof are unchanged.
	\end{remark}
	
	\begin{figure}[h]
		\centering
		\resizebox{1.0\textwidth}{!}{
			\begin{tikzpicture}[scale=0.4]
				\filldraw[dash dot, fill=lightcyan, opacity=0.3] (4.745, 5.717) .. controls (5.117, 5.953) and (5.142, 6.405) .. (5.237, 6.621) .. controls (5.332, 6.837) and (5.497, 6.817) .. (5.772, 6.943) .. controls (6.047, 7.069) and (6.431, 7.342) .. (6.613, 7.606) .. controls (6.795, 7.871) and (6.775, 8.127) .. (6.882, 8.344) .. controls (6.99, 8.561) and (7.225, 8.739) .. (7.401, 8.85) .. controls (7.577, 8.961) and (7.692, 9.005) .. (8.009, 9.123) .. controls (8.325, 9.241) and (8.841, 9.432) .. (9.044, 9.708) .. controls (9.246, 9.983) and (9.135, 10.343) .. (9.159, 10.529) .. controls (9.183, 10.716) and (9.342, 10.729) .. (9.507, 10.76) .. controls (9.673, 10.791) and (9.844, 10.84) .. (10.011, 11.046) .. controls (10.178, 11.252) and (10.34, 11.616) .. (10.395, 11.838) .. controls (10.451, 12.06) and (10.4, 12.141) .. (10.462, 12.277) -- (10.462, 12.277) .. controls (10.178, 12.183) and (9.977, 12.3) .. (9.726, 12.325) .. controls (9.475, 12.35) and (9.176, 12.283) .. (8.996, 12.24) .. controls (8.817, 12.196) and (8.759, 12.177) .. (8.618, 12.214) .. controls (8.478, 12.252) and (8.256, 12.347) .. (8.009, 12.362) .. controls (7.763, 12.376) and (7.493, 12.31) .. (7.301, 12.296) .. controls (7.108, 12.283) and (6.993, 12.322) .. (6.9, 12.369) .. controls (6.808, 12.416) and (6.739, 12.471) .. (6.688, 12.508) .. controls (6.638, 12.546) and (6.606, 12.566) .. (6.614, 12.555) .. controls (5.927, 12.156) and (5.666, 11.569) .. (5.492, 11.243) .. controls (5.317, 10.916) and (5.227, 10.85) .. (5.018, 10.746) .. controls (4.809, 10.642) and (4.479, 10.501) .. (4.255, 10.301) .. controls (4.031, 10.101) and (3.911, 9.843) .. (3.783, 9.575) .. controls (3.655, 9.306) and (3.518, 9.028) .. (3.285, 8.819) .. controls (3.051, 8.611) and (2.722, 8.472) .. (2.485, 8.234) .. controls (2.249, 7.996) and (2.105, 7.658) .. (1.855, 7.428) .. controls (1.605, 7.198) and (1.249, 7.076) .. (1.019, 6.818) .. controls (0.789, 6.56) and (0.685, 6.165) .. (0, 5.849) .. controls (0.166, 5.845) and (0.414, 5.699) .. (0.624, 5.68) .. controls (0.833, 5.662) and (1.004, 5.77) .. (1.168, 5.779) .. controls (1.333, 5.788) and (1.492, 5.698) .. (1.704, 5.72) .. controls (1.915, 5.741) and (2.18, 5.873) .. (2.475, 5.86) .. controls (2.77, 5.847) and (3.097, 5.688) .. (3.473, 5.73) .. controls (3.848, 5.772) and (4.274, 6.015) .. (4.745, 5.717) -- cycle;
				\draw[dashed, ->] (3.857, 5.099) -- (11.759, 13.001);
				
				\draw[very thick] (4.739, 5.714) .. controls (5.112, 5.949) and (5.136, 6.402) .. (5.231, 6.618) .. controls (5.326, 6.834) and (5.492, 6.813) .. (5.767, 6.939) .. controls (6.042, 7.065) and (6.426, 7.338) .. (6.608, 7.603) .. controls (6.79, 7.867) and (6.769, 8.124) .. (6.877, 8.341) .. controls (6.984, 8.558) and (7.22, 8.735) .. (7.395, 8.846) .. controls (7.571, 8.957) and (7.687, 9.002) .. (8.003, 9.12) .. controls (8.319, 9.237) and (8.836, 9.429) .. (9.039, 9.704) .. controls (9.241, 9.98) and (9.129, 10.339) .. (9.153, 10.526) .. controls (9.177, 10.712) and (9.337, 10.726) .. (9.502, 10.757) .. controls (9.667, 10.788) and (9.839, 10.836) .. (10.005, 11.043) .. controls (10.172, 11.249) and (10.334, 11.612) .. (10.39, 11.835) .. controls (10.446, 12.057) and (10.395, 12.137) .. (10.456, 12.274);
				
				\draw[dotted] 
				(7.808, 9.05) ellipse[x radius=4.516, y radius=2.258];
				\draw[dotted] 
				(7.808, 9.05) circle[radius=4.516];
				\draw[dotted] 
				(7.808, 9.05) circle[radius=1, cm={1.812,0.271,0.965,5.258,(0,0)}];
				
				\draw[dashed, ->] (7.808, 3.405) -- (7.808, 14.694);
				\draw[dashed] (7.207, 12.571) arc[start angle=-99.81, end angle=-70.972, x radius=3.482, y radius=-3.482];
				\filldraw[blue, draw=blue, thick, opacity=0.7] (8.937, 12.436) arc[start angle=-139.829, end angle=-121.971, x radius=3.42, y radius=-3.42];
				\draw[dashed, opacity=0.7] (5.781, 7.032) arc[start angle=-158.921, end angle=-140.502, x radius=19.553, y radius=-19.553];
				\filldraw[red, draw=red, thick, opacity=0.2] (9.739, 13.131) arc[start angle=-64.679, end angle=0, x radius=4.516, y radius=-4.516];
				\filldraw[red, draw=red, thick, opacity=0.2] (9.739, 13.131) -- (7.808, 9.05) -- (12.324, 9.05);
				\filldraw[blue, draw=blue, opacity=0.2] (9.739, 13.131) -- (7.808, 9.05) -- (8.937, 12.436);
				\filldraw[draw=purple, thick, fill=blue, opacity=0.5] (7.808, 9.05) -- (8.937, 12.436);
				\filldraw[purple, draw=purple] 
				(8.937, 12.436) circle[radius=0.121];
				
				\draw[blue, opacity=0.7] (8.586, 11.306) arc[start angle=-139.828, end angle=-116.079, x radius=1.481, y radius=-1.481];

				\draw[red, <-, opacity=0.7] (8.774, 11) arc[start angle=-63.549, end angle=0, x radius=2.178, y radius=-2.178];
				\draw[shift={(14.581, 9.05)}, scale=2, dashed, ->] (0, 0) -- (5.644, 0);
				\draw[shift={(20.226, 3.405)}, scale=2, dashed, ->] (0, 0) -- (0, 5.644);
				\draw[shift={(26.999, 9.05)}, scale=2, dashed, ->] (0, 0) -- (5.644, 0);
				\draw[shift={(32.644, 3.405)}, scale=2, dashed, ->] (0, 0) -- (0, 5.644);
				\draw[shift={(15.737, 9.564)}, scale=2, very thick] (0, 0) .. controls (0.591, -0.408) and (0.783, -0.208) .. (0.955, -0.207) .. controls (1.127, -0.207) and (1.279, -0.405) .. (1.425, -0.426) .. controls (1.571, -0.448) and (1.71, -0.292) .. (1.843, -0.233) .. controls (1.975, -0.175) and (2.101, -0.215) .. (2.168, -0.235) .. controls (2.236, -0.256) and (2.244, -0.257) .. (2.244, -0.257);
				\draw[shift={(28.172, 9.67)}, scale=2, very thick] (0, 0) .. controls (0.655, -0.697) and (0.838, -0.363) .. (1.004, -0.277) .. controls (1.171, -0.191) and (1.321, -0.354) .. (1.44, -0.342) .. controls (1.559, -0.33) and (1.648, -0.144) .. (1.775, -0.109) .. controls (1.903, -0.074) and (2.07, -0.191) .. (2.153, -0.249) .. controls (2.236, -0.308) and (2.236, -0.309) .. (2.304, -0.276) .. controls (2.371, -0.243) and (2.507, -0.176) .. (2.658, -0.224) .. controls (2.809, -0.273) and (2.976, -0.437) .. (3.136, -0.428) .. controls (3.296, -0.418) and (3.45, -0.236) .. (3.555, -0.234) .. controls (3.659, -0.233) and (3.715, -0.413) .. (3.848, -0.384) .. controls (3.981, -0.355) and (4.192, -0.116) .. (4.49, -0.221);
				\draw[dotted] 
				(20.226, 9.05) circle[radius=4.516];
				\draw[dotted] 
				(32.644, 9.05) circle[radius=4.516];
				\filldraw[shift={(20.226, 9.05)}, scale=2, fill=lightcyan] (0, 0) -- (-2.019, 1.01);
				\draw[shift={(20.226, 9.05)}, scale=2] (0, 0) -- (-2.019, -1.01);
				\draw[shift={(32.644, 9.05)}, scale=2] (0, 0) -- (-1.693, 1.493);
				\draw[shift={(32.644, 9.05)}, scale=2] (0, 0) -- (1.693, 1.493);
				\draw[shift={(14.581, 11.307)}, scale=2, line cap=round] (0, 0) -- (0, 0);
				\draw[->] (21.355, 9.05) arc[start angle=0, end angle=153.437, radius=1.129];
				\draw[->] (32.644, 10.179) arc[start angle=-91.498, end angle=-41.414, x radius=1.104, y radius=-1.104];
				\fill[lightcyan, opacity=0.3] (16.187, 11.069) arc[start angle=-153.435, end angle=0, x radius=4.516, y radius=-4.516];
				\fill[lightcyan, opacity=0.3] (24.741, 9.05) arc[start angle=0, end angle=153.435, x radius=4.516, y radius=-4.516];
				\fill[shift={(24.741, 9.05)}, scale=2, lightcyan, opacity=0.3] (0, 0) -- (-4.277, -1.01) -- (-2.258, 0) -- (0, 0) -- cycle;
				\fill[shift={(24.741, 9.05)}, scale=2, lightcyan, opacity=0.3] (0, 0) -- (-2.258, 0) -- (-4.277, 1.01) -- cycle;
				\fill[lightcyan, opacity=0.3] (29.257, 12.036) arc[start angle=-138.59, end angle=-41.41, x radius=4.516, y radius=-4.516];
				\fill[shift={(32.644, 9.05)}, scale=2, lightcyan, opacity=0.3] (0, 0) -- (-1.693, 1.493) -- (1.693, 1.493) -- (1.693, 1.493) -- cycle;
				
				\node at (6.816, 7.143) {$\mathcal{E}$};
				\node at (8.518, 14.5) {$x_n$};
				\draw[dashed, ->] (2.164, 9.05) -- (13.453, 9.05);
				\node at (11.863, 12.413) {$x''$};
				\node at (12.972, 9.5) {$x_{n-1}$};
				\node at (1.265, 6.245) {$\mathcal{S}$};
				\node[text=blue] at (9.183, 12.06) {$\beta$};
				\node[text=red] at (10.141, 10.188) {$\alpha$};
				\node[text=purple] at (8.521, 12.958) {$e$};

				\node at (16.534, 9.8) {$\mathcal{S}$};
				
				\node at (20.984, 14.5) {$x_n$};
				\node at (25.37, 9.5) {$x_{n-1}$};
				\node at (20.797, 10.397) {$\alpha_0$};
				\node at (29.003, 9.8) {$\mathcal{S}$};
				\node at (37.941, 9.75) {$x''$};
				\node at (33.215, 10.689) {$\beta_0$};
				\node at (33.47, 14.5) {$x_n$};

			\end{tikzpicture}
		}
		
		\caption{Projections of the conical region determined by the angles $\alpha_0$ and $\beta_0$ for a $C^1$ slit $\S$. The vector $e = e(\alpha, \beta)$ is defined via certain spherical-like coordinates, where the \textit{north pole} sits at $(e'', 0, 0) \in \R^{n-2} \times \R \times \R$ and the equator lies in the $x_{n-1}x_n$ plane; so $\alpha$ acts as the longitude (in the plane $x_{n-1}x_n$), and $\beta$ as the latitude. Since the slit is $C^1$ at 0, it gets infinitesimally flat around the origin, so these cones will lie strictly inside $\Omega\cap B_{r_0}$ as soon as the scale $r_0$ is small enough (see \cref{lem:spherical_distance_boundary}). }
		\label{fig:cones}
	\end{figure} 
	
	Our second main result is the corresponding upper bound, with the same $\dist(\cdot,\S)^{1/2}$ behavior along with a correction factor. It also holds in presence of a right-hand side and non-zero boundary data.
	
	\begin{theorem}[Boundary growth] \label{th:upper_bound_thm}
		Let $\Omega \subset \R^n$ be a Lipschitz slit domain with $C^1$-modulus $\omega$ at $0$ in the sense of \cref{def:C1_slit_domain}. Let $u$ be a weak solution to
		\begin{equation*}
			\left\{
			\begin{aligned}
				-\Delta u &= f &&\text{in } \Omega \cap B_1, \\
				u        &= g &&\text{on } \partial\Omega \cap B_1,
			\end{aligned}
			\right.
		\end{equation*}
		where $f \in L^n(\Omega \cap B_1)$ and $g = g(x') \in C^{1,\omega_g}(\R^{n-1})$ for some modulus of continuity $\omega_g$, and set
		\begin{equation*}
			\omega_f(s) \coloneqq \sup_{x \in \Omega \cap B_1} \norm{f}_{L^n(\Omega \cap B_s(x) \cap B_1)},
			\qquad
			v(x) \coloneqq u(x) - g(0) - \nabla g(0)\cdot x' ,
		\end{equation*}
		noting that $\omega_f$ is itself a modulus of continuity (by dominated convergence). Then there exist $r_0 > 0$ depending only on $n$, $\omega$, $\omega_f$ and $\omega_g$, and $C > 0$ depending only on $n$, such that for every $0 < \rho < r < r_0$,
		\begin{equation*}
			\frac{\|v\|_{L^\infty(B_\rho)}}{\sqrt{\rho}}
			\leq
			C \left( \frac{\|v\|_{L^\infty(B_r)}}{\sqrt{r}} + \int_{4\rho}^{4r} (\omega_f(s) + \omega_g(s))\frac{ds}{s} \right) \exp\left(C \int_{256\rho}^{256r} \omega(s) \frac{ds}{s} \right).
		\end{equation*}
	\end{theorem}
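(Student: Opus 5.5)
We argue by a dyadic iteration: an improvement-of-flatness type one-step estimate between consecutive scales, then telescoping. Set $r_k = 4^{-k} r$ and
\[
M_k \coloneqq \frac{\|v\|_{L^\infty(B_{r_k})}}{\sqrt{r_k}} .
\]
The goal is a one-step inequality of the form
\begin{equation*}
	M_{k+1} \leq \bigl(1 + C\,\omega(256\, r_k)\bigr) M_k + C\bigl(\omega_f(4r_k) + \omega_g(4r_k)\bigr).
\end{equation*}
Iterating this from $k=0$ to the scale of $\rho$ gives
\[
\prod_k \bigl(1+C\omega(256 r_k)\bigr) \leq \exp\Bigl(C\sum_k \omega(256 r_k)\Bigr).
\]
Since $\omega$ is monotone, the sum is comparable to $\int_{256\rho}^{256r}\omega(s)\,ds/s$. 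The source terms accumulate into $\int_{4\rho}^{4r}(\omega_f+\omega_g)\,ds/s$, multiplied by the same exponential factor. Finally, for $\rho$ between two dyadic scales we lose only a constant factor. This yields the stated estimate.

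\textbf{The one-step estimate.} This is proved by comparison with an explicit barrier in $B_{r_k}$, after rescaling to unit size.

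First, we reduce $v$ to homogeneous data. On $\S$ we have $x_n=\Gamma(x')$, and $v = g(x') - g(0) - \nabla g(0)\cdot x'$ there. Hence $|v| \leq \omega_g(s)\,s$ on $\S\cap B_s$. Likewise $-\Delta v = f$, with $\|f\|_{L^n(B_s)} \leq \omega_f(s)$. By the ABP/Krylov–Safonov estimate in $L^n$, the part of $v$ generated by $f$ and $g$ in $B_{r_k}$ is bounded by
\[
C\, r_k\bigl(\omega_f(r_k) + \omega_g(r_k)\bigr) \leq C\sqrt{r_k}\, \bigl(\omega_f+\omega_g\bigr)(r_k)
\]
for $r_k<1$. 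After dividing by $\sqrt{r_k}$, this gives the additive error above.

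Second, we treat the harmonic part $w$, which vanishes on $\S$ and satisfies $|w|\leq \|v\|_{L^\infty(B_{r_k})}$. We compare it with $\|v\|_{L^\infty(B_{r_k})}\, \Phi_k$. Here $\Phi_k$ is the model function $\Re\sqrt{x_{n-1}+ix_n}$ (suitably normalized, and augmented with a term that is harmonic and positive on $\partial B_{r_k}$), composed with the flattening change of coordinates constructed earlier in the paper. That map sends the slit domain onto the model half-hyperplane slit at scale $r_k$, with first-order deviation from the identity $\lesssim \omega(C r_k)$ and controlled second derivatives.

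The key computation is that $\Delta(\Phi_k)$ is small relative to $\Phi_k$ itself. We would correct $\Phi_k$ by a factor $1 \pm C\omega(Cr_k)$ together with a small multiple of a positive superharmonic corrector, for instance $\Re(x_{n-1}+ix_n)^{1/2-\delta}$ with $\delta \sim \omega(C r_k)$, chosen to absorb the error. This produces a supersolution that vanishes on $\S$, dominates $|w|$ on $\partial B_{r_k}$, and satisfies
\[
\sup_{B_{r_{k+1}}} \frac{\Phi}{\sqrt{r_{k+1}}} \leq \bigl(1+C\omega(Cr_k)\bigr)\, \sup_{\partial B_{r_k}}\frac{\Phi}{\sqrt{r_k}} .
\]
The maximum principle then gives the one-step bound. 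Using an exponent $1/2-\delta$ is exactly what turns the loss into a factor $4^{\delta}\approx 1+C\omega$ per dyadic step.

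\textbf{Main obstacle.} The hardest step is the barrier construction: showing that the composed function is a genuine supersolution up to the edge. Near the edge, the second derivatives of the flattening map are of size $\omega(r)/r$, while the model function is singular like $d^{-3/2}$. We must check that the error terms are $\lesssim \omega\cdot d^{-3/2}$, so that the $\delta$-corrector, whose Laplacian is of order $-\delta\, d^{-3/2}$, absorbs them. For this we rely on the quantitative second-order estimates for the flattening map established earlier in the paper. If pointwise control fails near the edge, we would instead average the slit's modulus over Whitney cubes and compare there.
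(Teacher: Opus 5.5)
There is a genuine gap. It lies in the one-step inequality itself, not only in the barrier construction.

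The quantity $M_k=\|v\|_{L^\infty(B_{r_k})}/\sqrt{r_k}$ does not satisfy $M_{k+1}\le(1+C\omega(256r_k))M_k+\dots$. This fails even for a flat slit, where $\omega$ may be taken as small as you like, and with $f=g=0$. In $\R^2$, take $w=\Re\sqrt z-\eta\,\Re z^{3/2}$ with small $\eta>0$. It is harmonic off $\H$, vanishes on $\H$, and satisfies $\sup_{B_r}|w|=\sqrt r\,(1-\eta r)$ for $9\eta r<1$, so $M(r/4)>M(r)$.

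The problem is structural, not just this example. The only input of your comparison step is $|w|\le\sqrt{r_k}M_k$ on $\partial B_{r_k}$. Any supersolution dominating such data is at least $\sqrt{r_k}M_k$ times the harmonic measure $H$ of $\partial B_{r_k}$ in $B_{r_k}\setminus\H$. For the flat slit in $\R^2$, this harmonic measure equals $\frac2\pi\arctan\frac43\approx0.59$ at $\frac{r_k}{4}e_1$ (use $\zeta=\sqrt z$ and odd reflection). So the best such a step can give is $M_{k+1}\le K M_k$ with $K\ge 2\cdot\frac2\pi\arctan\frac43\approx1.18$.

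Consequently, your two requirements on $\Phi$ are incompatible: dominating $|w|$ on all of $\partial B_{r_k}\cap\Omega$, and losing only a factor $1+C\omega$ between $\partial B_{r_k}$ and $B_{r_{k+1}}$. A sup-to-sup iteration loses a fixed factor $K>1$ per step, i.e. $K^m\sim(r/\rho)^c$ overall. That is a power loss, not $\exp(C\int\omega\,ds/s)$.

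Separately, your corrector $\Re(x_{n-1}+ix_n)^{1/2-\delta}$ is harmonic in the flat slit domain and positive on $\H$, so it cannot absorb errors. What works is $h^{1\pm\delta}$, whose Laplacian gains $\pm\delta(1\pm\delta)h^{\pm\delta-1}|\nabla h|^2$. Combined with the estimate $h|\Delta h|\le\delta|\nabla h|^2$ from \cref{th:barriers}, this gives sub- and supersolutions.

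The missing idea is to iterate the coefficient in front of a geometry-adapted harmonic profile, not the sup norm. The paper proves by induction that $|u|\le c_k\varphi_k+2^{-k}d_k$ in $\Omega\cap B_{2^{-2k-1}}$. Here $\varphi_k$ is harmonic in $\Omega\cap B_{2^{-2k}}$ with boundary data $h$, sandwiched between $(3\sqrt r)^{-\varepsilon}h^{1+\varepsilon}$ and $(3\sqrt r)^{\varepsilon}h^{1-\varepsilon}$ (\cref{cor:equiv_prop3.3_from_clara}).

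The structure of the induction is as follows.
\begin{itemize}
\item Boundary Harnack, with its non-sharp constant, touches the main term only once, at the initial scale.
\item Afterwards, the comparison principle passes the bound $c_{k-1}\varphi_{k-1}+2^{-k+1}d_{k-1}$ to the harmonic replacement $\tilde u_k$. Thus the behavior near the slit on the sphere is inherited, not re-derived from a sup bound.
\item Boundary Harnack (via \cref{rmk:bdry_harnack_neg}) is then applied only to $\tilde u_k-c_{k-1}\varphi_{k-1}$, which has the size of the small error. Its constant therefore multiplies only $d_{k-1}$.
\item The only multiplicative loss comes from comparing profiles at consecutive scales, $\varphi_{k-1}\le(1+A\varepsilon_{k-1})\varphi_k$ (\cref{lem:varphi_k-1_comp_varphi_k}). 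This follows from $\|\varphi_r-h\|_{L^\infty}\le 15\sqrt r\,\varepsilon$ and the almost-positivity result \cref{cor:almost_pos}.
\end{itemize}
This yields $c_k=(1+A\varepsilon_{k-1})c_{k-1}+Ad_{k-1}$, and unrolling gives the theorem. Your proposal has no counterpart of this consecutive-profile comparison, and that comparison is exactly what makes the per-step loss $1+O(\omega)$ rather than $O(1)$.
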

	
	Iterating \cref{th:upper_bound_thm} over increasingly smaller scales yields a modulus of continuity for $u$ across the entire edge $\mathcal{E}$. 
	\begin{corollary}[Boundary regularity] \label{cor:boundary_regularity}
		Under the hypotheses of \cref{th:upper_bound_thm}, assume in addition that $\Omega$ has $C^1$-modulus $\omega$ at $x_0$, in the sense of \cref{def:C1_slit_domain}, for every point $x_0 \in \S \cap B_1$. Then, the function
		\begin{equation*}
			\widetilde{\omega} (t)
			\coloneqq
			C \sqrt{t} \left( \|u \|_{L^\infty(B_1)}
			+ \int_t^{4r_0} (\omega_g(s) + \omega_f(s)) \frac{ds}{s} \right) \exp \left( C \int_t^{256r_0} \omega(s) \frac{ds}{s} \right)
		\end{equation*}
		is a modulus of continuity and $u \in C^{\widetilde{\omega}}(\overline{\Omega} \cap B_{1/2})$.
	\end{corollary}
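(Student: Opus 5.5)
We deduce \cref{cor:boundary_regularity} from \cref{th:upper_bound_thm}, applied at every edge point, combined with interior estimates. Throughout, $M$ denotes a constant controlled by $\|u\|_{L^\infty(B_1)}$, $\|f\|_{L^n}$ and $\|g\|_{C^{1,\omega_g}}$.

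\textbf{Step 1: $\widetilde\omega$ is a modulus of continuity.} Continuity on $(0,r_0)$ is clear, so it suffices to check $\widetilde\omega(t)\to0$ as $t\to0$. Since $\omega(s)\to0$, for every $\eps>0$ there is $\delta>0$ with $\omega\le\eps/C$ on $(0,\delta)$. Hence
\[
\exp\Big(C\int_t^{256r_0}\omega\,\tfrac{ds}{s}\Big)\le C_\delta\, t^{-\eps} \qquad \text{for } t<\delta .
\]
Likewise $\int_t^{4r_0}(\omega_g+\omega_f)\,\tfrac{ds}{s}\le C\log(1/t)$. Choosing $\eps<1/2$ gives $\widetilde\omega(t)\lesssim t^{1/2-\eps}\log(1/t)\to0$. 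If monotonicity is required, replace $\widetilde\omega$ by a comparable monotone envelope; in fact $t\mapsto\sqrt t\,t^{-\eps}$ is already increasing.

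\textbf{Step 2: growth at every edge point.} Fix $x_0\in\mathcal E\cap B_{1/2}$, translate $x_0$ to the origin, and rotate so that the tangent plane to $\S$ at $x_0$ is $\{x_n=0\}$. The hypothesis states that the domain has $C^1$-modulus $\omega$ at $x_0$; I take this to include a possibly different tangent frame, still with uniform Lipschitz constants. Apply \cref{th:upper_bound_thm} with $r=r_0$ and $\rho=t$. Using $\|v\|_{L^\infty(B_{r_0})}\le M$, this gives
\[
|u(x)-g(x_0)-\nabla g(x_0)\cdot(x-x_0)'|\le \widetilde\omega(t) \qquad \text{for } |x-x_0|\le t,
\]
after enlarging $C$. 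Since $|\nabla g(x_0)|\,t\lesssim t\le\widetilde\omega(t)$, we get $|u(x)-u(x_0)|\lesssim\widetilde\omega(|x-x_0|)$. At slit points that are not on the edge, the same argument applies with the standard boundary estimate for a $C^1$ hypersurface approached from both sides. Away from the edge, locally this is a two-sided $C^{1,\omega}$ Dirichlet problem. There one has $|u(x)-u(y)|\lesssim|x-y|^{1/2}$ on scales comparable to the distance to $\mathcal E$, which is dominated by $\widetilde\omega$.

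\textbf{Step 3: combining edge decay with interior regularity.} Take $x,y\in\overline\Omega\cap B_{1/2}$ and set $d=\dist(x,\mathcal E\cup\S)\le\dist(y,\mathcal E\cup\S)$. There are two cases.

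If $|x-y|\ge d/2$, let $x_0$ be a nearest boundary point to $x$. Both $|x-x_0|$ and $|y-x_0|$ are $\lesssim|x-y|$, so Step 2 and the triangle inequality finish.

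If $|x-y|<d/2$, work in the ball $B_d(x)\subset\Omega$. The function $u-u(x_0)$ is bounded there by $\widetilde\omega(2d)$ by Step 2. Interior $W^{2,n}$/Calderón–Zygmund (or $C^{0,1-}$) estimates with $f\in L^n$ then give
\[
|u(x)-u(y)|\lesssim \frac{|x-y|}{d}\,\widetilde\omega(2d)+\omega_f(d)\,|x-y| .
\]
For $L^n$ data, the interior estimate yields a modulus like $|x-y|\log$ or $\omega_f(|x-y|)$; this is fine since it is bounded by $\widetilde\omega$.

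The main obstacle is the monotonicity bookkeeping in this second case: we must show $\frac{s}{d}\,\widetilde\omega(d)\lesssim\widetilde\omega(s)$ for $s<d$. This amounts to $\widetilde\omega(t)/t$ being almost decreasing, which follows from $\widetilde\omega(t)=\sqrt t\times(\text{a factor that decreases in }t)$. Indeed, $\widetilde\omega(t)/t=t^{-1/2}\times(\text{decreasing})$ is decreasing. Note that this step needs $\omega$ to be uniform over edge points and $r_0$ to be uniform, both of which are supplied by the hypotheses.
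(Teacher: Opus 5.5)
Your route is the one the paper intends. The paper's proof of this corollary just defers to the argument of \cite[Theorem 1.2]{ClaraC1}, with \cref{th:upper_bound_thm} supplying the growth at boundary points. Your edge-point Step 2, the two cases of Step 3, and the observation that $\widetilde\omega(t)/t$ is decreasing (because $\widetilde\omega$ is $\sqrt t$ times a nonincreasing factor) are exactly that argument.

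The step that fails as written is your treatment of slit points $z\in\S\setminus\mathcal E$. Step 3 needs it whenever the nearest boundary point $x_0$ of $x$ is such a point. The bound $|u(x)-u(y)|\lesssim|x-y|^{1/2}$ near $z$ is not supported by anything. Set $D\coloneqq\dist(z,\mathcal E)$. The only control on the oscillation of $u$ in $B_D(z)$ comes from the edge, and it is $\widetilde\omega(D)$. For non-Dini $\omega$ this exceeds $\sqrt D$ by the unbounded factor $\exp(C\int_D^{256r_0}\omega(s)\frac{ds}{s})$, which the paper expects to be sharp. Moreover, the two-sided $C^1$ estimate only lives inside $B_D(z)$, so by itself it does not give a growth bound at $z$ at all scales.

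What you need is $|u(w)-u(z)|\lesssim\widetilde\omega(|w-z|)$ for all $w$. Write $s\coloneqq|w-z|$.
\begin{itemize}
\item For $s\ge D/2$, use the edge estimate at a nearest edge point together with the doubling bound $\widetilde\omega(3t)\le\sqrt3\,\widetilde\omega(t)$.
\item For $s<D/2$, work on each side of the slit in $B_{D/2}(z)$, where $|u-u(z)|\lesssim\widetilde\omega(2D)$. Apply the one-sided $C^1$-domain growth estimate of \cite{ClaraC1} there, rescaled from scale $D$. That estimate gives linear growth up to the factor $\exp(C\int_s^D\omega(\tau)\frac{d\tau}{\tau})$.
\item This yields $\frac sD\,\widetilde\omega(2D)\exp(C\int_s^D\omega(\tau)\frac{d\tau}{\tau})$ plus data terms.
\end{itemize}
The last quantity is $\lesssim\widetilde\omega(s)$ for three reasons. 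The exponential in $\widetilde\omega$ absorbs the new correction factor, provided its constant is at least the one in \cite{ClaraC1}. Also $\frac sD\sqrt{D}\le\sqrt s$. Finally, the prefactor of $\widetilde\omega$ is nonincreasing in $t$.

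This is the same bookkeeping as in your Step 3, now with the extra exponential. It is also exactly where the hypothesis that $\Omega$ has $C^1$-modulus $\omega$ at every point of $\S\cap B_1$ is used.

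A smaller correction in case 2 of Step 3 concerns the $f$ term. With $f\in L^n$ the interior estimate is not Lipschitz, and a term $\omega_f(|x-y|)$ need not be $\lesssim\widetilde\omega(|x-y|)$. Instead, split off $w$ solving $-\Delta w=f$ in $B_d(x)$ with $w=0$ on $\partial B_d(x)$. The rescaled embedding $W^{2,n}\subset C^{0,1/2}$ gives $|w(x)-w(y)|\lesssim\sqrt{d\,|x-y|}\,\omega_f(d)$. This is dominated by $\sqrt{|x-y|}\int_{|x-y|}^{4r_0}\omega_f(s)\frac{ds}{s}$, hence by $\widetilde\omega(|x-y|)$.
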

	\begin{remark} \label{rmk:edge_boundary_regularity}
		The main contribution of \cref{cor:boundary_regularity} is the square root boundary regularity of $u$ at edge points. Indeed, away from the edge (i.e. $x_0 \in \S \setminus \mathcal{E}$) $\Omega$ can be split into two halves and \cite[Theorem 1.2]{ClaraC1} can be applied to each one to obtain a linear modulus of continuity for $u$.
	\end{remark}
	
	When $\omega$ is Dini, our correction terms collapse and we recover the same estimates as in the flat and smooth case \cite{SilvaSavinBHSlit}, with no loss.
	
	\begin{corollary}[Dini slits] 
		\label{cor:dini}
		Let $\Omega$ be a Lipschitz slit domain with $C^1$-modulus $\omega$ at $0$, and assume $\omega$ is Dini, that is $\int_0^1 \omega(s)\,\frac{ds}{s} < \infty$. Then, with $r_0$ and $e$ as above, the following hold:
		\begin{enumerate}[label=(\roman*)]
			\item If $u \geq 0$ is harmonic in $\Omega \cap B_1$ and vanishes on $\partial\Omega \cap B_1$, then
			\begin{equation*}
				\frac{u(\rho e)}{\sqrt{\rho}}
				\ \geq \
				\frac{1}{C}\,\frac{u(r e)}{\sqrt{r}}
				\qquad \text{for all } 0 < \rho < r < r_0;
			\end{equation*}
			where $C$ depends only on $n$, $\alpha_0$, $\beta_0$ and $\int_0^1 \omega(s)\frac{ds}{s}$.
			\item If moreover $\omega_f$ and $\omega_g$ are Dini, and $\Omega$ has $C^1$-modulus $\omega$ at $x_0$ for every point $x_0 \in \S \cap B_1$, then $u \in C^{0, \frac{1}{2}}(\overline{\Omega} \cap B_{1/2})$. In this case the constant depends on $n$ and on the Dini integrals of $\omega$, $\omega_f$ and $\omega_g$.
		\end{enumerate}
	\end{corollary}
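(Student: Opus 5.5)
Both parts follow directly from \cref{th:lower_bound_thm}, \cref{th:upper_bound_thm} and \cref{cor:boundary_regularity}; the only work is to check that the Dini hypotheses make every correction term bounded by a constant.

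For (i), we apply \cref{th:lower_bound_thm} with the same $r_0$, and $e = e(\alpha,\beta)$ as there. For $0<\rho<r<r_0$ (shrinking $r_0$ so that $64 r_0 \le 1$) we estimate
\begin{equation*}
	\int_{64\rho}^{64r}\omega(s)\frac{ds}{s} \le \int_0^{1}\omega(s)\frac{ds}{s} \eqqcolon D_\omega < \infty.
\end{equation*}
The exponential factor is therefore at least $e^{-C D_\omega}$. Absorbing it into the constant gives the claim with constant $C e^{C D_\omega}$, which depends only on $n$, $\alpha_0$, $\beta_0$ and $D_\omega$.

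For (ii), we use \cref{cor:boundary_regularity}. Again taking $256 r_0 \le 1$, the modulus satisfies
\begin{equation*}
	\widetilde\omega(t) \le C\sqrt t\,\big(\|u\|_{L^\infty(B_1)} + D_f + D_g\big)\, e^{C D_\omega},
\end{equation*}
where $D_f$ and $D_g$ are the Dini integrals of $\omega_f$ and $\omega_g$ over $(0,1)$. Hence $\widetilde\omega(t)\le K\sqrt t$ for $t\le r_0$, and $u\in C^{\widetilde\omega}(\overline\Omega\cap B_{1/2})$ gives $|u(x)-u(y)|\le K|x-y|^{1/2}$ whenever $|x-y|\le r_0$.

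For $|x-y|>r_0$ we use the crude bound $|u(x)-u(y)|\le 2\|u\|_{L^\infty(B_{1/2})}\le 2\|u\|_{L^\infty(B_{1/2})}\, r_0^{-1/2}|x-y|^{1/2}$. Here $\|u\|_{L^\infty(B_{1/2})}$ is finite, being controlled by the continuity statement of \cref{cor:boundary_regularity} together with the boundary data, or implicitly assumed through $\|u\|_{L^\infty(B_1)}$ appearing in $\widetilde\omega$. This yields $u\in C^{0,1/2}(\overline\Omega\cap B_{1/2})$.

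There is no real obstacle. The only point needing care is the dependence of the constants. The constant $C$ in the theorems depends only on $n$ (and on $\alpha_0,\beta_0$ for the lower bound), and the Dini integrals enter only through the exponentials and the additive terms. The H\"older seminorm bound therefore depends on $n$, the three Dini integrals and $\|u\|_{L^\infty(B_1)}$. It also depends on $r_0$, which in turn depends on $\omega,\omega_f,\omega_g$ themselves rather than only on their Dini integrals, so for large separations the constant involves $r_0$ as well, consistent with the statement's ``with $r_0$ as above''.
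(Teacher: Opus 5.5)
Your proposal is correct and follows the paper's approach: the paper treats this corollary as an immediate consequence of \cref{th:lower_bound_thm}, \cref{th:upper_bound_thm} and \cref{cor:boundary_regularity}, with the Dini integrals bounding the correction factors. One refinement concerns your worry that $r_0$ depends on the moduli themselves and not only on their Dini integrals. This worry is unnecessary. Monotonicity gives $\omega(t)\ln(1/t)\le\int_t^1\omega(s)\,\frac{ds}{s}$, and likewise for $\omega_f$ and $\omega_g$. Hence the scale below which they drop under $\omega_0$, and with it $r_0$, can be chosen depending only on $n$ and the Dini integrals, which is exactly the constant dependence claimed in (ii).
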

	
	In thick domains (with full dimensional exterior, as opposed to slit domains), $C^{1,\mathrm{Dini}}$ is the classical condition under which solutions attain the boundary behavior of the smooth case, going back to V\'yborn\'y (in a form later shown by Lieberman \cite{Lieb} to be equivalent to $C^{1,\mathrm{Dini}}$). This condition is sharp, i.e., the exterior $C^{1,\mathrm{Dini}}$ condition is necessary \cite{Widman, VerzhbinskiiMazya, KamyninKhimchenko}. \Cref{cor:dini} is its slit counterpart; we are not aware of a previous result of the Dini case for slit domains, even for flat ones. \cref{th:lower_bound_thm,th:upper_bound_thm} go beyond this threshold, giving information that no Dini theory provides.
	
	\begin{remark} 
		\label{rmk:DS_comparison}
		For slits contained in a hyperplane (i.e. flat) and with an edge of class $C^{1,\alpha}$ (i.e. smoother than ours), De Silva and Savin \cite[Theorem 2.5]{SilvaSavinBHSlit} obtain an expansion of $u$ against an explicit half-power profile. When the edge has higher regularity, they obtain a higher order boundary Harnack \cite[Theorem 2.3]{SilvaSavinBHSlit}. However, these finer estimates rely strongly on the flatness and smoothness 
		of the slit.
	\end{remark}
	
	\begin{remark}[Sharpness]
		\label{rmk:optimality}
		We expect the correction factor to be sharp (up to constants), as suggested by the work of Kozlov and Maz'ya \cite{KozlovMazya03} in thick domains, who obtain an identity for harmonic functions carrying exactly this factor. In the same way, for a self-similar Lipschitz cone the solution behaves like $\dist(\cdot,\S)^{\frac{1}{2} \pm \varepsilon}$, with $\varepsilon$ depending on the opening of the cone, which coincides (up to constants) with the exponent produced by our estimates.
	\end{remark}
	
	\begin{remark}[Asymptotic behavior]
		\label{rmk:asymptotics}
		\cref{th:lower_bound_thm,th:upper_bound_thm} provide quantitative growth bounds, but do not establish an asymptotic expansion near the edge. For a positive harmonic function vanishing on the slit, a natural further question in the Dini setting is to use these two-sided bounds to determine whether $u(\rho e)/\sqrt{\rho}$ converges as $\rho\to0$, and to identify the limit profile. Beyond the Dini regime, the correction factors make the question nontrivial, and it remains open, and we believe that the barriers developed here are the right tool to answer it.
	\end{remark}
	
	\begin{remark}
		For fixed $r > 0$, it is easy to check that, since $\omega(s)\to0$ as $s\to0$, it holds $\int_\rho^r\omega(s)\,\frac{ds}{s}=o(\log(r/\rho))$ as $\rho \to 0$. Consequently, for positive harmonic functions vanishing on the slit, \cref{th:lower_bound_thm,th:upper_bound_thm} imply that along any admissible non-tangential direction, 
		\begin{equation*}
			\frac12 - o(1) 
			\leq 
			\frac{\log(u(\rho e) / u(re))}{\log(\rho/r)}
			\leq
			\frac12 + o(1), 
			\qquad \text{as $\rho \to 0$.}
		\end{equation*}
		Equivalently, recalling that $r$ was fixed, it holds 
		\begin{equation*}
			\rho^{\frac12 + o(1)} \leq u(\rho e) \leq \rho^{\frac12 -o(1)}, 
			\qquad \text{as $\rho \to 0$,}
		\end{equation*}
		which means that for any $\varepsilon > 0$ there exists $C_\varepsilon > 0$ such that for $\rho > 0$ small enough it holds 
		\begin{equation*}
			C_\varepsilon^{-1} \rho^{\frac12 + \varepsilon}
			\leq 
			u(\rho e) 
			\leq 
			C_\varepsilon \rho^{\frac12 - \varepsilon}.
		\end{equation*}
		Thus, even without a Dini assumption, the correction factors contribute only a vanishing change in the growth exponent. Note that this does not establish convergence of $u(\rho e)/\sqrt{\rho}$ or an asymptotic expansion at the edge, as hinted in the preceding remark.
	\end{remark}

	The proofs of \cref{th:lower_bound_thm,th:upper_bound_thm} rely heavily on a change of coordinates flattening a Lipschitz slit domain onto the \textit{model slit domain} $\R^n \setminus \H$ (corresponding to $\gamma \equiv 0$, $\Gamma \equiv 0$), where 
	\begin{equation} \label{eq:model_slit}
		\H \coloneqq \big\{ (x'', x_{n-1}, x_n) \in \R^{n-2} \times \R \times \R \ : \ x_{n-1} \leq 0 \text{ and } x_n = 0 \big\},
	\end{equation}
	following the two-dimensional complex model
	\begin{equation} \label{eq:complex_slit}
		\H_\CC \coloneqq \big\{ z \in \CC: \Re(z) \leq 0, \, \Im(z) = 0 \big\}.
	\end{equation}
	The key feature of this change of coordinates is that, although the domains are only Lipschitz, the map carries quantitative estimates up to second order. For a more precise version of the following result, see \cref{th:complex_coordinates}.

	\begin{theorem}[Smooth change of coordinates in Lipschitz slit domains] \label{th:complex_coordinates_intro}
		Let $\Omega = \R^n \setminus \S \subset \R^n$ be a Lipschitz slit domain in the sense of \cref{def:lip_slit_domain}, with Lipschitz constant $L \leq L_0$, where $L_0 = L_0(n) > 0$ is small enough. Then there exist $\zeta_1, \zeta_2 \in C^{1, 1}_{\mathrm{loc}}(\Omega)$ such that
		\begin{equation*}
			\Phi(y) \coloneqq \big(y'', \zeta_1(y), \zeta_2(y)\big)
		\end{equation*}
		maps $\Omega = \R^n \setminus \S$ into the model slit domain $\R^n \setminus \H$. Moreover, there exists $C = C(n) > 0$ such that the following estimates hold for every $y \in \Omega$:
		\begin{align*}
			\abs{\zeta_1(y) - (y_{n-1} - \gamma(y''))} +
			\abs{\zeta_2(y) - (y_n - \Gamma(y'))}
			& \leq
			CL \, \dist(y, \S),
			\\
			\abs{\nabla \zeta_1(y) - e_{n-1}} +
			\abs{\nabla \zeta_2(y) - e_n}
			& \leq
			CL,
		\end{align*} 
		and, for a.e.\ $y \in \Omega$,
		\begin{equation*}
			\abs{D^2 \zeta_1(y)} +
			\abs{D^2 \zeta_2(y)}
			\leq
			C\frac{L}{\dist(y, \S)}.
		\end{equation*}
	\end{theorem}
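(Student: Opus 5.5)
The construction is a regularized distance approach, done in two stages: first flatten the edge inside the slit's own hypersurface, then flatten the hypersurface itself. The naive map $y \mapsto (y'', y_{n-1}-\gamma(y''), y_n - \Gamma(y'))$ already sends $\S$ onto $\H$. It is only Lipschitz, though, so the work is to replace $\gamma$ and $\Gamma$ by smooth extensions adapted to $\dist(\cdot,\S)$.

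\textbf{Step 1: the normal component $\zeta_2$.} Fix a mollifier $\varphi$ supported in the unit ball of $\R^{n-1}$. Define a Whitney-type extension
\begin{equation*}
\widetilde\Gamma(y) \coloneqq (\Gamma * \varphi_{t(y)})(y'), \qquad t(y) \coloneqq c\,\delta(y),
\end{equation*}
where $\delta$ is a regularized distance to $\S$. Concretely, $\delta$ is a smooth function comparable to $\dist(\cdot,\S)$ with $|\nabla\delta|\le C$ and $|D^2\delta|\le C/\dist$; Stein's regularized distance provides this. Then set $\zeta_2(y)\coloneqq y_n-\widetilde\Gamma(y)$.

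The standard mollifier estimates give
\begin{align*}
|\widetilde\Gamma-\Gamma(y')| &\le CLt,\\
|\nabla_{y'}\widetilde\Gamma|,\ |\partial_t(\Gamma*\varphi_t)| &\le CL \quad(\text{using } \textstyle\int\partial_t\varphi_t=0),\\
|D^2\widetilde\Gamma| &\le CL/t.
\end{align*}
Combined with the chain rule through $t(y)$, these yield all three estimates for $\zeta_2$.

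The subtlety is that $\zeta_2$ must vanish exactly on $\S$ (and not on the whole plane extension). There $t\to0$, so $\widetilde\Gamma\to\Gamma$ continuously, and $\zeta_2=0$ on $\S$. Beyond the edge, the zero set of $\zeta_2$ is harmless, because $\zeta_1>0$ there.

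\textbf{Step 2: the tangential component $\zeta_1$.} Define it in the same way,
\begin{equation*}
\zeta_1(y)\coloneqq y_{n-1}-(\gamma*\varphi_{t(y)})(y''),
\end{equation*}
using the same scale $t$ and now a mollifier on $\R^{n-2}$. The same estimates hold.

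\textbf{Step 3: the image lies in $\R^n\setminus\H$.} We must show $\Phi(y)\notin\H$ for $y\in\Omega$. Suppose $\zeta_2(y)=0$ and $\zeta_1(y)\le0$.

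\emph{Away from $\S$.} If $\dist(y,\S)\gtrsim|y_n-\Gamma(y')|$ fails, the $C^1$ estimate $|\nabla\zeta_2-e_n|\le CL$ together with $L\le L_0$ small forces $|\zeta_2(y)|\ge \tfrac12|y_n-\Gamma(y')|-CL\dist>0$. This needs a case analysis comparing $\dist(y,\S)$ with $|y_n-\Gamma|$ and with $(y_{n-1}-\gamma)_+$, using that $\S$ is a small-Lipschitz graph over a Lipschitz region.

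\emph{Near $\S$ but off it.} Here $y$ must lie near the continuation of $\S$ past the edge, i.e.\ $y_{n-1}-\gamma(y'')\gtrsim\dist(y,\S)$. Then $\zeta_1\ge y_{n-1}-\gamma-CL\dist>0$, a contradiction.

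This case analysis requires the smallness of $L_0$ and careful comparisons of distances. It is the main obstacle: making $\Phi$ map into the complement of $\H$ exactly, with constants uniform in $n$ only.

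Finally, $C^{1,1}_{\mathrm{loc}}$ regularity is clear, since $\delta$ is smooth in $\Omega$ and $t>0$ there. If needed, one can replace "$\Phi$ maps into" by a local bi-Lipschitz statement, using that $D\Phi$ is close to the identity.
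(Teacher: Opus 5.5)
Your construction is correct, but it follows a different route from the paper. You define $\zeta_1,\zeta_2$ directly in physical coordinates, mollifying $\gamma$ and $\Gamma$ around $y''$ and $y'$ at a scale $t(y)$ comparable to $\dist(y,\S)$. That scale comes from Stein's regularized distance, whose constants depend only on $n$. The three estimates then follow from the chain rule, they hold at every point, and $\zeta_i\in C^\infty(\Omega)$.

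The paper instead builds the forward parametrization $P(x)=(x'',x_{n-1}+\tgamma(x),x_n+\tGamma(x))$ on the model side. It mollifies at the explicit scale $\rho(x)=\dist(x,\H)$, and evaluates $\tGamma$ at $(x'',x_{n-1}+\tgamma(x))$ so that $P(\H)\subset\S$. It then shows that $P$ is a bi-Lipschitz bijection of $\R^n$ with $P(\H)=\S$, using coercivity plus an open--closed argument in the connected set $\R^n\setminus\S$. Finally it takes $\zeta$ to be the last two components of $P^{-1}$. Its derivative bounds come from a Neumann series and the second-order inverse function formula. They hold only almost everywhere, since $\rho$ is merely $C^{1,1}$, and the comparison $\rho(P^{-1}(y))\approx\dist(y,\S)$ needs a separate lemma.

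What the paper's route buys is a global homeomorphism with $\Phi(\Omega)=\R^n\setminus\H$. It also gives bounds phrased through $\sigma$ and $\rho$ at the preimage point, which is the form used in the barrier theorem. Your route is shorter and avoids inversion. It only gives ``into'', but that is all the statement requires. Your bounds also localize, e.g.\ $|\nabla\zeta_2-e_n|\le C\|\nabla\Gamma\|_{L^\infty(B'_{t(y)}(y'))}$.

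Step~3, which you flag as the main obstacle, is in fact immediate from the zeroth-order estimates, not from the gradient bound you cite. Let $d=\dist(y,\S)>0$ and suppose $\zeta_2(y)=0$ and $\zeta_1(y)\le 0$. Then $|y_n-\Gamma(y')|\le CLd$ and $y_{n-1}-\gamma(y'')\le CLd$.
\begin{itemize}
\item If $y_{n-1}\le\gamma(y'')$, the point $(y',\Gamma(y'))\in\S$ gives $d\le CLd$.
\item Otherwise, the edge point $(y'',\gamma(y''),\Gamma(y'',\gamma(y'')))\in\S$ is at distance at most $(1+L)(y_{n-1}-\gamma(y''))+|y_n-\Gamma(y')|\le 3CLd$ from $y$.
\end{itemize}
Either way $d=0$ once $3CL<1$, a contradiction.
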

	
	\begin{figure}[ht]
		\centering
		\resizebox{0.9\textwidth}{!}{
			\begin{tikzpicture}[scale=1]
				
				\draw[dotted, <-] (6.209, 31.289) -- (6.209, 23.386) -- (6.209, 23.951);
				\draw[dotted, ->] (2.258, 27.337) -- (9.454, 27.337);
				\draw[dotted, <-] (8.89, 30.019) -- (3.528, 24.656);        
				\draw[dotted, <-] (16.228, 31.289) -- (16.228, 23.386);
				\draw[dotted, ->] (11.712, 27.337) -- (18.768, 27.337);
				\draw[dotted, <-] (18.909, 30.019) -- (13.547, 24.656);
				
				\fill[cyan, opacity=0.1] 
				(6.209, 27.337)
				.. controls (6.209, 27.337) and (6.209, 27.337) .. (6.326, 27.408)
				.. controls (6.444, 27.479) and (6.679, 27.62) .. (6.82, 27.831)
				.. controls (6.961, 28.043) and (7.009, 28.325) .. (7.126, 28.513)
				.. controls (7.244, 28.702) and (7.432, 28.796) .. (7.549, 28.913)
				.. controls (7.667, 29.031) and (7.714, 29.172) .. (7.738, 29.337)
				.. controls (7.761, 29.501) and (7.761, 29.689) .. (7.855, 29.83)
				.. controls (7.949, 29.972) and (8.137, 30.066) .. (8.255, 30.136)
				.. controls (8.373, 30.207) and (8.42, 30.254) .. (8.443, 30.277)
				.. controls (8.467, 30.301) and (8.467, 30.301) .. (8.467, 30.301)
				.. controls (8.467, 30.301) and (8.467, 30.301) .. (8.467, 30.301)
				.. controls (8.467, 30.301) and (8.467, 30.301) .. (8.255, 30.324)
				.. controls (8.043, 30.348) and (7.62, 30.395) .. (7.244, 30.395)
				.. controls (6.867, 30.395) and (6.538, 30.348) .. (6.279, 30.254)
				.. controls (6.021, 30.16) and (5.833, 30.019) .. (5.597, 29.995)
				.. controls (5.362, 29.972) and (5.08, 30.066) .. (4.868, 30.136)
				.. controls (4.657, 30.207) and (4.516, 30.254) .. (4.374, 30.23)
				.. controls (4.233, 30.207) and (4.092, 30.113) .. (4.022, 30.089)
				.. controls (3.951, 30.066) and (3.951, 30.113) .. (3.951, 30.136)
				.. controls (3.951, 30.16) and (3.951, 30.16) .. (3.951, 30.16)
				.. controls (3.951, 30.16) and (3.951, 30.16) .. (3.881, 30.089)
				.. controls (3.81, 30.019) and (3.669, 29.877) .. (3.575, 29.713)
				.. controls (3.481, 29.548) and (3.434, 29.36) .. (3.363, 29.172)
				.. controls (3.293, 28.984) and (3.199, 28.796) .. (3.104, 28.702)
				.. controls (3.01, 28.607) and (2.916, 28.607) .. (2.846, 28.513)
				.. controls (2.775, 28.419) and (2.728, 28.231) .. (2.752, 28.137)
				.. controls (2.775, 28.043) and (2.869, 28.043) .. (2.822, 28.019)
				.. controls (2.775, 27.996) and (2.587, 27.949) .. (2.446, 27.761)
				.. controls (2.305, 27.573) and (2.211, 27.243) .. (2.023, 27.008)
				.. controls (1.834, 26.773) and (1.552, 26.632) .. (1.294, 26.608)
				.. controls (1.035, 26.585) and (0.8, 26.679) .. (0.635, 26.561)
				.. controls (0.47, 26.444) and (0.376, 26.114) .. (0.306, 25.903)
				.. controls (0.235, 25.691) and (0.188, 25.597) .. (0.165, 25.432)
				.. controls (0.141, 25.268) and (0.141, 25.033) .. (0.141, 24.844)
				.. controls (0.141, 24.656) and (0.141, 24.515) .. (0.165, 24.445)
				.. controls (0.188, 24.374) and (0.235, 24.374) .. (0.259, 24.374)
				.. controls (0.282, 24.374) and (0.282, 24.374) .. (0.423, 24.515)
				.. controls (0.564, 24.656) and (0.847, 24.939) .. (1.129, 25.033)
				.. controls (1.411, 25.127) and (1.693, 25.033) .. (1.929, 24.892)
				.. controls (2.164, 24.75) and (2.352, 24.562) .. (2.564, 24.445)
				.. controls (2.775, 24.327) and (3.01, 24.28) .. (3.199, 24.374)
				.. controls (3.387, 24.468) and (3.528, 24.703) .. (3.645, 24.75)
				.. controls (3.763, 24.797) and (3.857, 24.656) .. (3.904, 24.586)
				.. controls (3.951, 24.515) and (3.951, 24.515) .. (3.951, 24.515)
				.. controls (3.951, 24.515) and (3.951, 24.515) .. (3.951, 24.515)
				.. controls (3.951, 24.515) and (3.951, 24.515) .. (3.975, 24.586)
				.. controls (3.998, 24.656) and (4.045, 24.797) .. (4.092, 24.986)
				.. controls (4.139, 25.174) and (4.186, 25.409) .. (4.327, 25.503)
				.. controls (4.469, 25.597) and (4.704, 25.55) .. (4.868, 25.55)
				.. controls (5.033, 25.55) and (5.127, 25.597) .. (5.174, 25.691)
				.. controls (5.221, 25.785) and (5.221, 25.926) .. (5.268, 25.997)
				.. controls (5.315, 26.067) and (5.409, 26.067) .. (5.503, 26.162)
				.. controls (5.597, 26.256) and (5.691, 26.444) .. (5.762, 26.561)
				.. controls (5.833, 26.679) and (5.88, 26.726) .. (5.95, 26.844)
				.. controls (6.021, 26.961) and (6.115, 27.149) .. (6.162, 27.243)
				.. controls (6.209, 27.337) and (6.209, 27.337) .. cycle;
				
				\draw[thick] (3.951, 24.515) .. controls (3.985, 24.618) and (3.998, 24.659) .. (4.015, 24.713) .. controls (4.032, 24.768) and (4.052, 24.838) .. (4.065, 24.884) .. controls (4.078, 24.93) and (4.084, 24.953) .. (4.089, 24.975) .. controls (4.095, 24.997) and (4.1, 25.019) .. (4.106, 25.04) .. controls (4.111, 25.062) and (4.117, 25.084) .. (4.122, 25.106) .. controls (4.128, 25.127) and (4.133, 25.148) .. (4.139, 25.168) .. controls (4.145, 25.188) and (4.15, 25.207) .. (4.156, 25.226) .. controls (4.162, 25.244) and (4.168, 25.261) .. (4.177, 25.283) .. controls (4.185, 25.305) and (4.197, 25.331) .. (4.207, 25.352) .. controls (4.217, 25.373) and (4.225, 25.388) .. (4.24, 25.408) .. controls (4.254, 25.428) and (4.275, 25.453) .. (4.292, 25.471) .. controls (4.309, 25.488) and (4.323, 25.498) .. (4.344, 25.511) .. controls (4.366, 25.524) and (4.395, 25.539) .. (4.426, 25.547) .. controls (4.457, 25.555) and (4.489, 25.557) .. (4.516, 25.559) .. controls (4.543, 25.56) and (4.566, 25.561) .. (4.593, 25.561) .. controls (4.621, 25.561) and (4.652, 25.56) .. (4.681, 25.559) .. controls (4.71, 25.557) and (4.736, 25.556) .. (4.764, 25.554) .. controls (4.793, 25.553) and (4.823, 25.551) .. (4.86, 25.552) .. controls (4.896, 25.553) and (4.938, 25.556) .. (4.973, 25.562) .. controls (5.007, 25.567) and (5.034, 25.575) .. (5.056, 25.585) .. controls (5.079, 25.595) and (5.098, 25.606) .. (5.115, 25.621) .. controls (5.133, 25.636) and (5.148, 25.654) .. (5.161, 25.675) .. controls (5.174, 25.695) and (5.184, 25.718) .. (5.191, 25.737) .. controls (5.198, 25.756) and (5.202, 25.771) .. (5.207, 25.791) .. controls (5.212, 25.81) and (5.217, 25.836) .. (5.224, 25.862) .. controls (5.23, 25.889) and (5.237, 25.917) .. (5.245, 25.94) .. controls (5.253, 25.964) and (5.261, 25.981) .. (5.271, 25.996) .. controls (5.28, 26.01) and (5.29, 26.021) .. (5.302, 26.03) .. controls (5.313, 26.039) and (5.326, 26.047) .. (5.344, 26.057) .. controls (5.361, 26.067) and (5.384, 26.079) .. (5.416, 26.101) .. controls (5.448, 26.124) and (5.489, 26.157) .. (5.517, 26.184) .. controls (5.545, 26.211) and (5.561, 26.231) .. (5.58, 26.258) .. controls (5.599, 26.286) and (5.621, 26.321) .. (5.639, 26.35) .. controls (5.658, 26.38) and (5.671, 26.404) .. (5.686, 26.429) .. controls (5.7, 26.454) and (5.716, 26.481) .. (5.748, 26.532) .. controls (5.78, 26.583) and (5.829, 26.658) .. (5.86, 26.706) .. controls (5.892, 26.754) and (5.906, 26.774) .. (5.918, 26.793) .. controls (5.93, 26.812) and (5.941, 26.829) .. (5.951, 26.846) .. controls (5.962, 26.864) and (5.972, 26.882) .. (5.986, 26.906) .. controls (6, 26.931) and (6.017, 26.963) .. (6.034, 26.994) .. controls (6.051, 27.025) and (6.067, 27.057) .. (6.091, 27.103) .. controls (6.115, 27.15) and (6.146, 27.212) .. (6.167, 27.255) .. controls (6.188, 27.298) and (6.198, 27.323) .. (6.204, 27.335) .. controls (6.209, 27.347) and (6.209, 27.347) .. (6.248, 27.369) .. controls (6.287, 27.392) and (6.365, 27.436) .. (6.431, 27.478) .. controls (6.497, 27.52) and (6.55, 27.559) .. (6.594, 27.594) .. controls (6.637, 27.629) and (6.671, 27.659) .. (6.703, 27.693) .. controls (6.735, 27.726) and (6.766, 27.762) .. (6.795, 27.801) .. controls (6.823, 27.841) and (6.85, 27.883) .. (6.881, 27.948) .. controls (6.912, 28.013) and (6.948, 28.1) .. (6.972, 28.161) .. controls (6.997, 28.222) and (7.01, 28.257) .. (7.029, 28.304) .. controls (7.049, 28.351) and (7.074, 28.41) .. (7.094, 28.45) .. controls (7.113, 28.49) and (7.126, 28.512) .. (7.145, 28.537) .. controls (7.163, 28.563) and (7.187, 28.593) .. (7.215, 28.624) .. controls (7.244, 28.654) and (7.277, 28.685) .. (7.318, 28.721) .. controls (7.36, 28.756) and (7.41, 28.796) .. (7.45, 28.828) .. controls (7.489, 28.861) and (7.519, 28.887) .. (7.545, 28.913) .. controls (7.571, 28.939) and (7.594, 28.966) .. (7.61, 28.988) .. controls (7.627, 29.009) and (7.637, 29.025) .. (7.646, 29.041) .. controls (7.655, 29.057) and (7.663, 29.073) .. (7.67, 29.089) .. controls (7.677, 29.105) and (7.684, 29.122) .. (7.69, 29.138) .. controls (7.696, 29.155) and (7.702, 29.171) .. (7.706, 29.188) .. controls (7.711, 29.206) and (7.716, 29.223) .. (7.722, 29.255) .. controls (7.728, 29.287) and (7.736, 29.334) .. (7.741, 29.369) .. controls (7.746, 29.403) and (7.749, 29.426) .. (7.754, 29.465) .. controls (7.759, 29.504) and (7.766, 29.558) .. (7.775, 29.604) .. controls (7.784, 29.651) and (7.794, 29.69) .. (7.8, 29.71) .. controls (7.805, 29.73) and (7.805, 29.73) .. (7.811, 29.743) .. controls (7.817, 29.756) and (7.829, 29.782) .. (7.841, 29.804) .. controls (7.854, 29.826) and (7.867, 29.845) .. (7.883, 29.865) .. controls (7.9, 29.885) and (7.919, 29.905) .. (7.942, 29.926) .. controls (7.964, 29.946) and (7.989, 29.967) .. (8.021, 29.989) .. controls (8.052, 30.012) and (8.091, 30.037) .. (8.128, 30.06) .. controls (8.165, 30.083) and (8.2, 30.104) .. (8.231, 30.128) .. controls (8.262, 30.151) and (8.289, 30.178) .. (8.326, 30.202) .. controls (8.363, 30.226) and (8.411, 30.247) .. (8.459, 30.302);
				
				\fill[cyan, opacity=0.1] (13.688, 24.797) -- (10.442, 24.797) -- (15.522, 29.877) -- (18.768, 29.877) -- cycle;
				
				\draw[thick] (13.688, 24.797) -- (18.768, 29.877);

				\fill[myyellow, opacity=0.1] (2.822, 30.724) rectangle (7.902, 26.209);
				\draw[myyellow, opacity=0.4] (2.822, 28.466) .. controls (2.822, 30.724) and (2.822, 30.724) .. (2.822, 30.724) .. controls (2.822, 30.724) and (2.822, 30.724) .. (3.669, 30.724) .. controls (4.516, 30.724) and (6.209, 30.724) .. (7.056, 30.724) .. controls (7.902, 30.724) and (7.902, 30.724) .. (7.902, 30.724) .. controls (7.902, 30.724) and (7.902, 30.724) .. (7.902, 29.972) .. controls (7.902, 29.219) and (7.902, 27.714) .. (7.902, 26.961) .. controls (7.902, 26.209) and (7.902, 26.209) .. (7.902, 26.209) .. controls (7.902, 26.209) and (7.902, 26.209) .. (5.927, 26.209);
				\draw[myyellow, dashed, opacity=0.4] (2.822, 28.466) .. controls (2.822, 26.209) and (2.822, 26.209) .. (2.822, 26.209) .. controls (2.822, 26.209) and (2.822, 26.209) .. (5.927, 26.209);
				
				\draw[myyellow, thick] (7.188, 28.588) .. controls (6.635, 28.794) and (6.548, 28.675) .. (6.452, 28.622) .. controls (6.355, 28.569) and (6.25, 28.583) .. (6.044, 28.648) .. controls (5.838, 28.714) and (5.533, 28.833) .. (5.286, 28.865) .. controls (5.04, 28.896) and (4.852, 28.841) .. (4.664, 28.848) .. controls (4.475, 28.854) and (4.285, 28.923) .. (4.155, 28.928) .. controls (4.024, 28.933) and (3.953, 28.875) .. (3.763, 28.811) .. controls (3.574, 28.746) and (3.265, 28.675) .. (3.154, 28.762);
				
				\fill[myyellow, opacity=.1] (12.841, 30.724) rectangle (17.921, 26.209);
				\draw[myyellow, opacity=0.4] (12.841, 28.466) .. controls (12.841, 30.724) and (12.841, 30.724) .. (12.841, 30.724) .. controls (12.841, 30.724) and (12.841, 30.724) .. (13.688, 30.724) .. controls (14.534, 30.724) and (16.228, 30.724) .. (17.074, 30.724) .. controls (17.921, 30.724) and (17.921, 30.724) .. (17.921, 30.724) .. controls (17.921, 30.724) and (17.921, 30.724) .. (17.921, 29.972) .. controls (17.921, 29.219) and (17.921, 27.714) .. (17.921, 26.961) .. controls (17.921, 26.209) and (17.921, 26.209) .. (17.921, 26.209) .. controls (17.921, 26.209) and (17.921, 26.209) .. (15.946, 26.209);
				\draw[myyellow, dashed, opacity=.4] (12.841, 28.466) -- (12.841, 26.209) -- (15.946, 26.209);
				
				\draw[myyellow, thick] (17.639, 28.749) -- (14.393, 28.749);
				
				\filldraw[draw=green, thin, fill=green, opacity=0.1] (0, 28.466) rectangle (5.927, 23.386);
				
				\draw[green, thick] (4.736, 25.558) .. controls (4.586, 25.843) and (4.214, 25.815) .. (3.97, 25.77) .. controls (3.726, 25.724) and (3.61, 25.661) .. (3.459, 25.658) .. controls (3.307, 25.656) and (3.12, 25.714) .. (2.935, 25.794) .. controls (2.751, 25.875) and (2.569, 25.978) .. (2.382, 26.013) .. controls (2.195, 26.049) and (2.002, 26.017) .. (1.769, 26.003) .. controls (1.537, 25.989) and (1.264, 25.993) .. (0.417, 26.199);
				
				\filldraw[draw=green, thin, fill=green, opacity=0.1] (10.019, 28.466) rectangle (15.946, 23.386);
				
				\draw[green, thick] (14.534, 25.644) -- (11.289, 25.644);

				\draw[->] (7.479, 29.736) arc[start angle=-99.298, end angle=-80.702, x radius=30.132, y radius=-30.132];
				\draw[->] (5.503, 24.656) arc[start angle=-99.107, end angle=-82.656, radius=32.059];
				
				\node[circle, fill, inner sep=1.5pt] at (7.197, 29.595) {};
				\node[circle, fill, inner sep=1.5pt] at (17.498, 29.595) {};
				\node[circle, fill, inner sep=1.5pt] at (14.958, 24.515) {};
				\node[circle, fill, inner sep=1.5pt] at (5.221, 24.797) {};
				
				\node[anchor=center] at (7.056, 29.736) {$y$};
				\node[anchor=center] at (16.651, 29.313) {$(y'', \zeta_1(y), \zeta_2(y))$};
				\node[anchor=center] at (5.08, 24.515) {$\widehat{y}$};
				\node[anchor=center] at (14.676, 24.092) {$(\widehat{y}'', \zeta_1(\widehat{y}), \zeta_2(\widehat{y}))$};
				\node[anchor=center] at (9.172, 27.055) {$y_{n-1}$};
				\node[anchor=center] at (8.89, 29.595) {$y''$};
				\node[anchor=center, font=\large] at (12.25, 30.4) {$\Phi$};
				\node[anchor=center, font=\large] at (9.65, 24) {$\Phi$};
				\node[anchor=center] at (6.491, 31.147) {$y_n$};
				\node[anchor=center] at (16.651, 31.147) {$x_n$};
				\node[anchor=center] at (18.909, 29.595) {$x''$};
				\node[anchor=center] at (18.486, 27.055) {$x_{n-1}$};
				\node[anchor=center, font=\large] at (0.564, 25.221) {$\mathcal{S}$};
				\node[anchor=center, font=\large] at (11.148, 25.08) {$\mathcal{H}$};
			\end{tikzpicture}
		}
		\caption{The change of coordinates sends $\Omega = \R^n \setminus \S$ to the model flat slit $\R^n \setminus \H$. All points $y \in \Omega$ sharing the same first coordinates $y''$ (green and orange planes in the figure) get sent to the same plane, whose section is just the complex flat slit domain $\CC \setminus \H_\CC$.}
		\label{fig:coordinates}
	\end{figure} 
	
	Our barrier constructions are built on \cref{th:complex_coordinates_intro}. Since the construction is geometric and not tied to any particular equation, we expect it to be useful for other problems. 
	
	We also remark that our change of coordinates should be distinguished from two similar constructions in the literature. First, it is not a regularized distance in the sense of Lieberman \cite{Lieb} as, in our case, what is regularized is the slit itself (at a scale comparable to the distance to it): the output is a map onto a model domain, rather than a substitute for the distance function. Moreover, it is also not the pair of edge-adapted coordinates used by De Silva and Savin \cite{SilvaSavinBHSlit, SilvaSavinThinOnePhase}, which measures the distance to an edge already assumed to lie in a hyperplane; the point of \cref{th:complex_coordinates_intro} is precisely that no such hyperplane is available.

	\subsection{Key ideas of the proof} \label{sec:motivation}
	
	Our starting point is the behavior of the complex square root. In two dimensions, (the principal branch of) $z \mapsto \sqrt{z}$ maps the model slit domain $\CC \setminus \H_\CC$ (see \eqref{eq:complex_slit}) conformally onto a half-plane. Since harmonic functions in the half-plane are well understood, this transformation immediately yields precise information on harmonic functions in the model slit domain, including the characteristic $\dist(x,\H)^{1/2}$ behavior near the edge.
	
	The same idea extends to higher dimensions by applying the complex square root to the last two coordinates in the model slit domain $\R^n \setminus \H$, producing the model harmonic function
	\begin{equation*}
		U_0(x)
		=
		U_0(x'', x_{n-1}, x_n)
		\coloneqq
		\Re \big(\sqrt{x_{n-1} + ix_n}\big),
	\end{equation*}
	which vanishes continuously on the slit $\H$ and, away from it, satisfies $U_0(x) \approx \dist(x, \H)^{1/2}$ near the edge. Our aim is to construct analogues of $U_0$ when the slit is curved and merely Lipschitz.
	
	The difficulty is that the square root owes its usefulness to being holomorphic, which ties it to the flat geometry of $\H$. For a general slit $\S$ as in \eqref{eq:general_slit}, with $\gamma$ and $\Gamma$ not identically zero, there is no explicit analogue which is harmonic, already in two dimensions. The natural attempt is to flatten $\S$ onto $\H$ by
	\begin{equation*}
		(x'', x_{n-1}, x_n)
		\mapsto
		(x'', x_{n-1} - \gamma(x''), x_n - \Gamma(x')),
	\end{equation*}
	and use the composition of $U_0$ with this map as a substitute for the model profile. However, the map is only Lipschitz, leaving us with no control on its second derivatives, which is needed
	to analyze the resulting PDE.

	Our main technical contribution is a regularized version of this flattening (\cref{th:complex_coordinates}). Instead of using $\gamma$ and $\Gamma$ directly, we replace them by regularizations at a scale comparable to the distance to the slit. This produces a change of coordinates that stays close to the natural flattening, maps $\S$ onto the model slit $\H$, and additionally enjoys quantitative second-order estimates depending only on the modulus of continuity of the slit. The regularization scale is what makes the second derivatives blow up no faster than the inverse distance to $\S$, which is the expected homogeneity.
	
	Using this regularized flattening, we construct quantitative sub- and superharmonic barriers (\cref{th:barriers}) that play the role of $U_0$ in Lipschitz slit domains. These barriers are the main ingredient in the proofs of \cref{th:lower_bound_thm,th:upper_bound_thm}.
	
	Finally, the correction factor in those theorems has a simple multiscale explanation. The proof builds on the multiscale iteration developed in \cite{ClaraC1}, whose underlying philosophy goes back to Ladyzhenskaya and Uraltseva~\cite{LU}: at each scale, one approximates the solution by a profile adapted to the geometry at that scale, and the successive approximation errors are controlled by the modulus of continuity of the boundary. In our setting, the profiles are the barriers from last paragraph. Passing from one scale to the next requires comparing barriers adapted to consecutive dyadic scales, which introduces a multiplicative error of the form $1\pm\mathcal{O}(\omega(2^{-k}))$, the sign depending on whether one considers upper or lower bounds (see \cref{sec:comparison_barriers}). As these errors accumulate throughout the iteration, one is naturally led to
	\begin{equation*}
		\prod_{k = 1}^{m} \bigl(1\pm\mathcal{O}(\omega(2^{-k}))\bigr)
		\approx \exp\!\left(\pm C\sum_{k = 1}^m \omega(2^{-k})\right)
		\approx \exp\!\left(\pm C\int_{2^{-m}}^1 \omega(s)\,\frac{ds}{s}\right),
	\end{equation*}
	which is the correction factor measuring the changes of the geometry across scales.

	
	\subsection{Historical comments} 
	
	Boundary regularity for harmonic functions in slit domains is closely tied to the Signorini problem. De Silva and Savin \cite{SilvaSavinBHSlit} proved a higher order boundary Harnack principle in slit domains whose slit lies in a fixed hyperplane with a $C^{k,\alpha}$ edge, giving smoothness of the free boundary at regular points, with a parallel result for the thin one-phase problem in \cite{SilvaSavinThinOnePhase} (the corresponding statement in thick $C^{k,\alpha}$ domains is \cite{SilvaSavinThick}). The Lipschitz case, still for slits inside a hyperplane, is due to De Silva and Savin \cite{DS20}, and to Petrosyan and Shi \cite{PetrosyanShi14} in the parabolic setting; Ros-Oton and the fourth author \cite{ClaraBoundaryHarnack} allow the slit to lie inside a Lipschitz graph and add a right-hand side. Koch, Petrosyan and Shi \cite{KochPetrosyanShi2015} used a boundary Hopf estimate in $C^{1,\alpha}$ slit domains to prove analyticity of the Signorini free boundary. In these works, the conclusion is a bound on the quotient of two solutions; we are not aware of any result identifying the growth rate itself under hypotheses weaker than a $C^{1,\alpha}$ edge inside a fixed hyperplane, nor of any treatment of the Dini case for slit domains.
	
	Regularized distances have long been a useful tool in classical analysis (see, for instance, \cite[Theorem 2.1]{Necas}, \cite[Section VI.2.1]{Stein}, \cite[Section 3.2]{Triebel}), and their explicit use in the construction of PDE barriers originates in the work of Lieberman \cite{Lieb}. Since then, this technique has proven remarkably versatile for studying fine boundary regularity in more complex settings. Indeed, regularized distances have been successfully adapted to parabolic equations \cite{Lieb_book,PedraClara}, free boundary problems \cite{RosOtonRestrepo2025} or higher codimensional boundaries \cite{DFM}. For a more comprehensive historical discussion, we refer to \cite[Section 1.2]{ClaraC1}.
	
	Multiscale iterations to obtain fine boundary regularity properties of solutions are classical in the field of PDE, starting with Wiener's construction \cite{Wiener24} of barriers from equilibrium potentials, which characterizes the regularity of boundary points through integrals involving capacities. Quantitative versions were derived by Maz'ya \cite{Mazya63, Mazya84} using energy methods. Exact boundary asymptotics for divergence-form PDEs in Lipschitz domains were developed by Kozlov and Maz'ya \cite{KozlovMazya03, KozlovMazya05} (see also \cite{MazyaSurvey11}) by reducing the boundary value problem, through a change of variables, to an evolution equation. Their arguments are potential-theoretic and spectral rather than based on explicit pointwise barriers, as opposed to our more geometric approach. The work of Ladyzhenskaya and Uraltseva \cite{LU} is closer in spirit to our results. There, barriers are constructed in a more geometric fashion, adapted to each scale, and not necessarily combined into a global barrier. This way, one obtains estimates involving the modulus of continuity of the boundary instead of capacitary objects. The strategy was refined by the fourth author in \cite{ClaraC1}, whose framework we follow and expand.
	
	
	\subsection{Organization of the paper}
	In \cref{sec:notation_and_setting} we introduce the main definitions and collect several preliminary results used throughout the paper. In \cref{sec:reg_complex_coord} we introduce the regularized complex coordinates and prove several growth estimates associated with them. In \cref{sec:barriers} we use the regularized complex coordinates to construct barriers mimicking the 2-dimensional case. In \cref{sec:C1_slit_domains} we combine the results from the previous sections to prove \cref{th:lower_bound_thm,th:upper_bound_thm}. Finally, \cref{sec:sec_ord_IFT} contains a calculation of the second derivatives of an inverse function.
	
	\subsection{Acknowledgments} The authors would like to thank Xavier Ros-Oton for useful discussions. 
	JD and AM were supported by the European Research Council under the Grant Agreement No. 101123223 (SSNSD), and by AEI project PID2024-156429NB-I00 (Spain). PHP was supported by the project PCI2024-155066-2, funded by MICIU/AEI/10.13039/501100011033 and cofunded by the European Union. CTL has received funding from the European Research Council (ERC) under the Grant Agreement No. 862342, from AEI project PID2024-156429NB-I00 (Spain), and from the Grant CEX2023-001347-S funded by MICIU/AEI/10.13039/5011000\-11033 (Spain).

	
	\section{Preliminaries}
	\label{sec:notation_and_setting}
	
	\subsection{Notation and setting} \label{subsec:notation}
	
	Throughout the paper the dimension will be $n \geq 2$. We denote by $e_1, \ldots, e_n$ the canonical basis of $\R^n$. For $x \in \R^n$, we will use the notation $x'' = (x_1,\ldots,x_{n-2})$ and $x' = (x'',x_{n-1})$ so that $x = (x',x_n) = (x'',x_{n-1},x_n)$. We will also write $B''_r(x'')$ to denote balls in $\R^{n-2}$ and $B'_r(x')$ to denote balls in $\R^{n-1}$. Whenever the center of these balls is the origin, we will simply abbreviate the notation to $B''_r$ and $B'_r$, respectively.
	
	Since our study will be connected to the model case of the complex square root, as explained in \cref{sec:motivation}, it will be useful to denote the model flat slit, which is just a half-hyperplane,
	\begin{equation*}
		\H
		\coloneqq
		\left\{ (x'', x_{n-1}, x_n) \in \R^{n-2} \times \R \times \R : \; x_{n-1} \leq 0 \text{ and } x_n = 0 \right\}.
	\end{equation*}
	Also, we will denote by
	\begin{equation} \label{def:rho}
		\rho(x) = \rho(x'',x_{n-1},x_n) \coloneqq \sqrt{(x_{n-1})_+^2 + x_n^2}
	\end{equation}
	the distance function to the model slit $\H$. Since $\rho$ only depends on $x_{n-1}$ and $x_n$, we will frequently make an abuse of notation and write $\rho(x_{n-1},x_n)$ instead of $\rho(x)$.
	
	\begin{remark}
		The following properties of $\rho$ directly stem from the definition:
		\begin{itemize}
			\item $\rho = 0$ on the model slit $\H$, and $\rho > 0$ in $\R^n \setminus \H$,
			\item $\rho(x) = |x_n|$ when $x_{n-1} \leq 0$, and $\rho(x) = |(x_{n-1}, x_n)| = \sqrt{x_{n-1}^2 + x_n^2}$ when $x_{n-1} > 0$.
		\end{itemize}
	\end{remark}
	
	The main object of study throughout the paper are slit domains:
	\begin{definition}[Slit domain] \label{def:slit_domain}
		Given $\gamma : \R^{n-2} \rightarrow \R$ and $\Gamma : \R^{n-1} \rightarrow \R$, we define the slit domain $\Omega_{\gamma, \Gamma} \subset \R^n$ by
		\begin{equation*}
			\Omega_{\gamma, \Gamma}
			\coloneqq
			\R^n \setminus \left\{ x \in \R^n : \; x_{n-1} \leq \gamma(x'') \text{ and } x_n = \Gamma(x') \right\}.
		\end{equation*}
	\end{definition}
	Given a slit domain $\Omega = \Omega_{\gamma, \Gamma}$ as in \cref{def:slit_domain} (we will omit the sub-indices when they are clear from the context), we will denote the \textbf{slit} by
	
	\begin{equation*}
		\S
		\coloneqq
		\left\{ x \in \R^n : \; x_{n-1} \leq \gamma(x'') \text{ and } x_n = \Gamma(x') \right\},
	\end{equation*}
	(see \cref{fig:slit}) so that
	\begin{equation*}
		\Omega
		=
		\R^n \setminus \S.
	\end{equation*}
	
	A relevant part of the slit is its boundary (as a lower dimensional object), what we call the \textbf{edge}:
	\begin{equation*}
		\mathcal{E}
		\coloneqq
		\left\{ x \in \R^n : \; x_{n-1} = \gamma(x'') \text{ and } x_n = \Gamma(x') \right\}
		=
		\left\{ (x'', \gamma(x''), \Gamma(x'',\gamma(x'')) : x'' \in \R^{n-2} \right\}.
	\end{equation*}
	
	In fact, the most interesting analysis of the boundary behavior of harmonic functions in slit domains happens near the edge, because in the rest of the slit, the domain is locally Lipschitz in the classical sense, hence well-established theory applies (see e.g. \cite{MazyaBook2018}). 
	
	Also, without loss of generality we will assume throughout the rest of the paper that $\gamma(0) = \Gamma(0) = 0$. To prove the main results of the paper we will mostly work with Lipschitz slit domains.
	\begin{definition}[Lipschitz slit domain] \label{def:lip_slit_domain}
		We say a slit domain $\Omega = \Omega_{\gamma,\Gamma}$ is a Lipschitz slit domain with Lipschitz constant $L$ if $\gamma$ and $\Gamma$ are Lipschitz continuous functions and 
		\begin{equation*}
			\norm{\nabla\gamma}_{L^\infty} \leq L,
			\qquad 
			\norm{\nabla\Gamma}_{L^\infty} \leq L.
		\end{equation*}
	\end{definition}
	
	\begin{definition}[Modulus of continuity]
		We say $\omega:[0,t_0) \to [0,\infty)$ is a modulus of continuity if it is continuous, strictly increasing and $\omega(0) = 0$.
	\end{definition}
	
	\begin{definition}[$C^1$-modulus of continuity of a slit domain] \label{def:C1_slit_domain}
		Let $\Omega = \Omega_{\gamma,\Gamma}$ be a Lipschitz slit domain in the sense of \cref{def:lip_slit_domain}. We say $\Omega$ is a slit domain with $C^1$-modulus of continuity $\omega$ at a given point $x_0 \in \partial\Omega = \S$ if, for all small $r$,
		\begin{equation*}
			\norm{\nabla\gamma}_{L^\infty(B''_r(x_0))} \leq \omega(r), \qquad
			\norm{\nabla\Gamma}_{L^\infty(B'_r(x_0))} \leq \omega(r).
		\end{equation*}
	\end{definition}
	
	Note that \cref{def:C1_slit_domain} may be found in the rest of the literature on slit domains stated with the equivalent condition
	\begin{equation*}
		\sup_{\substack{x'', y'' \in B''_r(x_0) \\ x'' \neq y''} } \frac{\abs{\gamma(x'') - \gamma(y'')}}{\abs{x''-y''}} 
		\leq 
		\omega(r), 
		\qquad
		\sup_{\substack{x',y' \in B'_r(x_0) \\ x' \neq y'} } \frac{\abs{\Gamma(x') - \Gamma(y')}}{\abs{x' - y'}} 
		\leq 
		\omega(r).
	\end{equation*}
	Note also that it follows directly from the definition that if $\Omega$ has $C^1$-modulus $\omega$ at $x_0$, then for every $L > 0$ there is $r_L > 0$ such that $\Omega \cap B_{r_L}(x_0)$ is a Lipschitz slit domain with
	constant $L$.
	
	\begin{definition}[$C^1$-modulus of continuity of functions]
		Let $\omega$ be a modulus of continuity.
		We will write $g \in C^{1,\omega}(\R^n)$ if $\nabla g$ is a continuous function such that for every $x \in \R^n$ and $r > 0$, it holds
		\begin{equation*}
			\norm{\nabla g(\cdot) - \nabla g(x)}_{L^\infty(B_r(x))} \leq \omega(r).
		\end{equation*}
	\end{definition}
	
	
	Let us also include an elementary geometric fact about our choice of coordinates for \cref{th:lower_bound_thm} (see also \cref{fig:cones}) which, roughly speaking, explains that these are appropriate non-tangential regions, in the sense that they are conical and stay away from the slit at small enough scales (or equivalently, if the slit is flat enough).
	
	\begin{lemma} \label{lem:spherical_distance_boundary}
		Let $0 < \alpha_0 < \pi$, $0 < \beta_0 < \pi/2$. Let $\Omega$ be a Lipschitz slit domain, with small enough Lipschitz constant depending on $\alpha_0$ and $\beta_0$. Then there exists $c_0 = c_0(n, \alpha_0, \beta_0) > 0$ such that, for any unit vector $e'' \in \R^{n-2}$, and angles $\abs{\alpha} \leq \alpha_0$, $\abs{\beta} \leq \beta_0$, if we define 
		\begin{equation*}
			e \coloneqq e(\alpha, \beta) \coloneqq (e''\sin\beta, \cos \alpha \cos \beta, \sin \alpha \cos \beta),
		\end{equation*}
		then, for any $r > 0$, it holds 
		\begin{equation*}
			\dist(re, \partial \Omega) \geq c_0 r.
		\end{equation*}
	\end{lemma}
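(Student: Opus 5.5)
The plan is to reduce to the model slit $\H$ by a direct perturbation argument. I would use only that $\gamma(0)=\Gamma(0)=0$ together with the Lipschitz bounds, so that $|\gamma(y'')|\le L|y''|$ and $|\Gamma(y')|\le L|y'|$. Since $\S$ is closed and $\partial\Omega=\S$, it suffices to bound $|re-y|$ from below for every $y\in\S$.

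\textbf{Step 1: the flat case.} Compute $\dist(e,\H)=\rho(e)$, with $\rho$ as in \eqref{def:rho}, which gives $\rho(e)=\cos\beta\,\sqrt{(\cos\alpha)_+^2+\sin^2\alpha}$.
\begin{itemize}
\item If $\cos\alpha\ge0$, then $\rho(e)=\cos\beta\ge\cos\beta_0$.
\item If $\cos\alpha<0$, then $\pi/2<|\alpha|\le\alpha_0<\pi$. Hence $|\sin\alpha|\ge\sin\alpha_0$, and $\rho(e)\ge\cos\beta_0\sin\alpha_0$.
\end{itemize}
So $\dist(e,\H)\ge d_0\coloneqq\cos\beta_0\min\{1,\sin\alpha_0\}>0$. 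By homogeneity of $\H$ (it is a cone with vertex $0$), $\dist(re,\H)\ge d_0 r$ for every $r>0$.

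\textbf{Step 2: projection onto $\H$.} For $y\in\S$, set $z\coloneqq(y'',\,y_{n-1}-\gamma(y''),\,y_n-\Gamma(y'))$. Since $y_{n-1}\le\gamma(y'')$ and $y_n=\Gamma(y')$, we have $z\in\H$. Moreover
\[
|z-y|\le|\gamma(y'')|+|\Gamma(y')|\le 2L|y|.
\]

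\textbf{Step 3: split into two cases.}
\begin{itemize}
\item If $|y|>2r$, then $|re-y|\ge|y|-r>r$.
\item If $|y|\le2r$, then $|re-y|\ge|re-z|-|z-y|\ge d_0r-4Lr$.
\end{itemize}
Choosing $L\le d_0/8$, which depends only on $\alpha_0,\beta_0$, gives $|re-y|\ge d_0r/2$ in the second case. Taking the infimum over $y\in\S$ in both cases yields $\dist(re,\partial\Omega)\ge c_0r$ with $c_0\coloneqq\min\{1,d_0/2\}=d_0/2$.

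There is no real obstacle here. The only points needing care are the trigonometric lower bound for $\rho(e)$ in the case $|\alpha|>\pi/2$, which is where $\alpha_0<\pi$ is used, and the observation that the scale invariance of the Lipschitz bounds, given that the graphs pass through the origin, makes the estimate uniform in $r$. The constant $c_0$ in fact depends only on $\alpha_0$ and $\beta_0$, not on $n$.
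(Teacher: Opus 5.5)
Your proof is correct, but it takes a somewhat different route from the paper's.

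\textbf{The paper's route.} The paper enlarges the admissible angles to $\widehat\alpha=(\alpha_0+\pi)/2$ and $\widehat\beta=(\beta_0+\pi/2)/2$. It asserts that the resulting cone $\widehat{\mathcal C}$ lies inside $\Omega$ once $L$ is small, and then uses $\dist(re,\partial\Omega)\ge\dist(re,\partial\widehat{\mathcal C})=r\,\dist(e,\partial\widehat{\mathcal C})$. Both the inclusion $\widehat{\mathcal C}\subset\Omega$ and the lower bound on $\dist(e,\partial\widehat{\mathcal C})$ are left as ``clear'' and ``elementary'', supported only by a figure.

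\textbf{Your route.} You compare directly with the model slit. You use the exact value $\dist(e,\H)=\rho(e)$ and the explicit map $y\mapsto z\in\H$ with $|z-y|\le 2L|y|$. Then the split $|y|\le 2r$ versus $|y|>2r$ absorbs the perturbation at the correct scale.

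\textbf{What each buys.} Your argument is fully quantitative. It produces $c_0=\tfrac12\cos\beta_0\sin\alpha_0$ under the explicit smallness $L\le c_0/4$, with no dependence on $n$. Your Step 2 is also exactly the computation one would need to justify the paper's unproved inclusion $\widehat{\mathcal C}\subset\Omega$. The paper's formulation, in turn, provides the nested cones $\mathcal C\subset\widehat{\mathcal C}\subset\Omega$ as geometric objects, which it reuses later when building Harnack chains inside these conical regions.
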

	\begin{proof}
		Define $\widehat{\alpha} \coloneqq (\alpha_0 + \pi) / 2 \in (\alpha_0, \pi)$ and $\widehat{\beta} \coloneqq (\beta_0 + \pi/2) / 2 \in (\beta_0, \pi/2)$. With these angles, we denote the conical regions 
		\begin{equation*}
			\mathcal{C} \coloneqq \{ e(\alpha, \beta) : \abs{\alpha} \leq \alpha_0, \abs{\beta} \leq \beta_0 \}, 
			\qquad 
			\widehat{\mathcal{C}} \coloneqq \{ e(\alpha, \beta) : \abs{\alpha} \leq \widehat{\alpha}, \abs{\beta} \leq \widehat{\beta} \}.
		\end{equation*}
		Upon choosing the Lipschitz constant of $\Omega$ small enough depending on $\alpha_0$ and $\beta_0$, it is clear that $\widehat{\mathcal{C}} \subset \Omega$ (see \cref{fig:cones} for intuition). This implies that, for $e$ as in the statement, 
		\begin{equation*}
			\dist(re, \partial \Omega) 
			\geq 
			\dist(re, \partial \widehat{\mathcal{C}})
			=
			r \dist(e, \partial \widehat{\mathcal{C}}),
		\end{equation*}
		where in the last equality we have used the homogeneity of the conical regions. It is now elementary to check that, since $e$ is a unit vector inside the conical region $\mathcal{C}$, its distance to $\partial \widehat{\mathcal{C}}$ only depends on $\alpha_0, \beta_0$ and $n$, which finishes the proof.    
	\end{proof}
	
	
	\subsection{Auxiliary results}
	
	We will make use of the following $L^\infty$ bound for weak subsolutions to inhomogeneous equations.
	\begin{theorem}[{\cite[Theorem 8.16]{GilbargTrudinger}}]
		\label{th:GilbargTrudinger_8.16}
		Assume that $\Omega \subset \R^n$ is a bounded domain. Let $u \in W^{1,2}(\Omega) \cap C(\overline{\Omega})$ satisfy 
		\begin{equation*}
			-\Delta u \le f \quad \text{weakly in } \Omega,
		\end{equation*}
		with $f \in L^n(\Omega)$. Then, it holds 
		\begin{equation*}
			\sup_{\Omega} u \le \sup_{\partial\Omega} u^+ + C(n)\diam(\Omega)\norm{f}_{L^n(\Omega)}.
		\end{equation*}
	\end{theorem}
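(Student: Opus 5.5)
We plan to prove this by comparison with a potential of $f$ and the weak maximum principle, rather than by reproducing the Moser iteration of \cite{GilbargTrudinger}. Set $d \coloneqq \diam(\Omega)$, fix $x_0 \in \Omega$, and note that $\Omega \subset B \coloneqq B_{d}(x_0)$. Extend $f$ by zero outside $\Omega$, so that $\norm{f}_{L^n(B)} = \norm{f}_{L^n(\Omega)}$.

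\textbf{Step 1 (the auxiliary function).} Let $w \in W^{1,2}_0(B)$ be the weak solution of $-\Delta w = |f|$ in $B$. This solution exists by Lax--Milgram, because $|f| \in L^n(B) \subset (W^{1,2}_0(B))^*$. For $n \ge 3$ this follows from the Sobolev embedding $W^{1,2}_0 \subset L^{2n/(n-2)}$ and Hölder's inequality. For $n = 2$ it follows from $L^2(B) \subset (W^{1,2}_0)^*$ on a bounded set. By the weak maximum principle, $w \ge 0$. Moreover, $w$ admits the representation
\begin{equation*}
w(x) = \int_B G_B(x,y)\,|f(y)|\,dy,
\end{equation*}
where $G_B$ is the Green function of $B$.

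\textbf{Step 2 (the key estimate $\sup_B w \le C(n)\, d\, \norm{f}_{L^n}$).} We use $0 \le G_B(x,y) \le c_n |x-y|^{2-n}$ for $n \ge 3$, and $0 \le G_B(x,y) \le \frac{1}{2\pi}\log\frac{2d}{|x-y|}$ for $n=2$. Hölder's inequality with exponents $n$ and $n' = n/(n-1)$ gives $w(x) \le \norm{G_B(x,\cdot)}_{L^{n'}(B)}\norm{f}_{L^n}$. For $n\ge3$ we compute in polar coordinates around $x$, using $B \subset B_{2d}(x)$:
\begin{equation*}
\int_{B_{2d}(x)} |x-y|^{(2-n)\frac{n}{n-1}}\,dy = c\int_0^{2d} r^{\frac{1}{n-1}}\,dr = c\, d^{\frac{n}{n-1}},
\end{equation*}
because $(2-n)\tfrac{n}{n-1} + n - 1 = \tfrac{1}{n-1}$. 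Raising to the power $(n-1)/n$ gives the bound $C d$. For $n=2$, the scaling $y = x + d z$ shows that $\norm{\log(2d/|x-\cdot|)}_{L^2(B_{2d}(x))} = C d$. In both cases the linear dependence on $d$ follows from dilation invariance of the estimate.

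\textbf{Step 3 (comparison).} Let $M \coloneqq \sup_{\partial\Omega} u^+$ and set $z \coloneqq u - M - w \in W^{1,2}(\Omega)$. Weakly in $\Omega$,
\begin{equation*}
-\Delta z \le f - |f| \le 0 .
\end{equation*}
On $\partial\Omega$ we have $u \le M$ and $w \ge 0$, so $z \le 0$ there. Since $u \in C(\overline\Omega)$ and $w$ is continuous, this holds in the $W^{1,2}$ sense $(z)^+ \in W^{1,2}_0(\Omega)$. The weak maximum principle \cite[Theorem 8.1]{GilbargTrudinger} then gives $z \le 0$ in $\Omega$. Combined with Step 2, this yields
\begin{equation*}
\sup_\Omega u \le M + \sup_B w \le \sup_{\partial\Omega}u^+ + C(n)\,d\,\norm{f}_{L^n(\Omega)}.
\end{equation*}

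\textbf{Main obstacle.} The only delicate point is regularity. We need $w$ to be continuous (or at least bounded, with the boundary inequality meaningful) so that $(z)^+ \in W^{1,2}_0(\Omega)$ is justified. The Green representation requires approximating $f$ by bounded functions, $f_k \to |f|$ in $L^n$. For the approximants $w_k$, classical theory gives continuity, and the uniform bound of Step 2 is stable under the limit. So we first run the argument with $w_k$, and then let $k\to\infty$ in the inequality $u \le M + w_k$.
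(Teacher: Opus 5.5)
The paper does not prove this statement; it quotes \cite[Theorem 8.16]{GilbargTrudinger}. That proof is an energy/test-function (Moser-type) iteration. It covers general divergence-form operators with lower-order terms and data in $L^{q/2}$, $q>n$, with a constant depending on $|\Omega|$. The stated form, with $\diam(\Omega)$ and $\norm{f}_{L^n}$, is recovered by taking $q=2n$ and rescaling $\Omega$ to unit diameter.

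Your route is genuinely different and, for the Laplacian, more elementary. You compare $u$ with $\sup_{\partial\Omega}u^+ + w$, where $w$ is the Green potential of $\abs{f}$ in a ball $B_d(x_0)\supset\Omega$. This reduces the estimate to the weak maximum principle plus the explicit bound $\norm{G_B(x,\cdot)}_{L^{n/(n-1)}(B)}\le C(n)\,d$. That gives the linear dependence on $\diam(\Omega)$ directly, and even a pointwise majorant $u\le \sup_{\partial\Omega}u^+ + w$. What you give up is generality. Your argument relies on the explicit fundamental solution, so it does not extend to variable coefficients without separate Green function bounds; the iteration in \cite{GilbargTrudinger} does.

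One step in your last paragraph fails as written. You cannot run Step 3 with $w_k$ and then let $k\to\infty$ in $u\le M+w_k$. Bounded approximants $f_k\to\abs{f}$ cannot in general satisfy $f_k\ge f$ (take $f$ unbounded above). Hence $-\Delta(u-M-w_k)\le f-f_k$ need not be $\le 0$, and the comparison breaks.

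Pass to the limit in $w$ instead. By linearity, Step 2 applied to $w_k-w_m$ (bounded data, so the Green representation is classical) gives $\sup_B\abs{w_k-w_m}\le C(n)\,d\,\norm{f_k-f_m}_{L^n}$. So $w_k$ converges uniformly, and the limit agrees with $w$ by Lax--Milgram stability. Thus $w$ is continuous on $\overline{B}$ with $\sup_B w\le C(n)\,d\,\norm{f}_{L^n}$, and Step 3 applies to $w$ itself.

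Alternatively, continuity of $w$ is not needed at all. Since $u\in C(\overline\Omega)$ with $u\le M$ on $\partial\Omega$, you have $(u-M)^+\in W^{1,2}_0(\Omega)$. Then $0\le(u-M-w)^+\le(u-M)^+$ with $(u-M-w)^+\in W^{1,2}(\Omega)$. The standard lattice property of $W^{1,2}_0$ then gives $(u-M-w)^+\in W^{1,2}_0(\Omega)$, which is all the weak maximum principle requires.
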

	
	We will also utilize the following version of the boundary Harnack for harmonic functions in slit domains.
	\begin{theorem}[{\cite[Theorem 3.4]{DS20}}]
		\label{th:bdry_harnack}
		Let $\Omega$ be a Lipschitz slit domain as in \cref{def:lip_slit_domain} with Lipschitz constant $L$. Let $u$ and $v$ be non-negative functions vanishing continuously on $\S \cap B_1$ and such that
		\begin{equation*}
			\Delta u = 0 = \Delta v \quad\text{in } \Omega \cap B_1.
		\end{equation*}
		Furthermore, assume $v(\frac{e_n}{2}) = 1 = v(-\frac{e_n}{2})$. Then, there exists $C = C(n,L) > 0$ such that
		\begin{equation*}
			C^{-1}\min\left\{u\left(\frac{e_n}{2}\right),u\left(-\frac{e_n}{2}\right)\right\}
			\leq
			\frac{u}{v}
			\leq
			C\max\left\{u\left(\frac{e_n}{2}\right),u\left(-\frac{e_n}{2}\right)\right\}
			\quad\text{in } \Omega \cap B_{1/2}.
		\end{equation*}
	\end{theorem}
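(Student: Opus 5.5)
The plan is to obtain the statement as an instance of the boundary Harnack principle for \emph{uniform domains satisfying the capacity density condition} (Aikawa's theorem), rather than through the NTA theory of Jerison--Kenig. NTA theory does not apply here: the complement $\S$ of $\Omega$ has empty interior, so the exterior corkscrew condition fails. Aikawa's framework needs only two properties of $\Omega\cap B_1$: that it is a uniform domain (interior corkscrews plus Harnack chains), and that $\R^n\setminus\Omega$ is uniformly capacity-dense. Granting these, his theorem gives, for non-negative harmonic $u,v$ vanishing on $\S\cap B_1$,
\begin{equation*}
\frac{u(x)}{v(x)} \approx \frac{u(x^\ast)}{v(x^\ast)} \qquad \text{for } x\in\Omega\cap B_{1/2},
\end{equation*}
with $x^\ast$ a reference point at unit scale and constants depending only on the uniformity and capacity-density constants. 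These in turn will depend only on $n$ and $L$.

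\textbf{Step 1: uniformity.} I would verify uniformity by hand, using the graph structure. First, for $x\in\Omega\cap B_{3/4}$ at distance $d$ from $\S$, I would produce a corkscrew point $x_r$ at every scale $r\geq d$ with $\dist(x_r,\S)\geq c(L)r$. The construction is as follows.
\begin{itemize}
\item If $x$ lies above or below the graph $\{x_n=\Gamma(x')\}$, move vertically in the $e_n$ direction.
\item If $x$ lies near the edge, move in the $e_{n-1}$ direction, past $\gamma(x'')$.
\end{itemize}
The Lipschitz bound $L$ (small, or at least finite) keeps these paths in cones away from $\S$. Second, concatenating such cone paths gives Harnack chains of length $C(L)\log(2+|x-y|/\min\{d(x),d(y)\})$ between any two points. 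The essential geometric point is that the upper and lower sides of the slit are connected by going around the edge through the region $\{x_{n-1}>\gamma(x'')\}$. For the same reason $u(\pm e_n/2)$ and $v(\pm e_n/2)$ are comparable to values at a point like $e_{n-1}/2$, up to a factor $C(n,L)$.

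\textbf{Step 2: capacity density.} For $\xi\in\S\cap B_{3/4}$ and $r<1/4$, the set $\S\cap B_r(\xi)$ contains a Lipschitz graph piece of diameter $\gtrsim r$. This holds even at edge points, because $\{x_{n-1}\le\gamma(x'')\}$ is a Lipschitz half-space in $\R^{n-1}$. Its projection onto $\R^{n-1}$ therefore contains an $(n-1)$-ball of radius $c(L)r$. Capacity does not increase under $1$-Lipschitz projection onto that hyperplane piece, and a flat $(n-1)$-disc of radius $r$ has capacity $\approx r^{n-2}$ (for $n=2$ use logarithmic capacity, or the equivalent Hausdorff content condition with $s>n-2$). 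Hence $\mathrm{cap}(\S\cap B_r(\xi),B_{2r}(\xi))\geq c\, r^{n-2}$, which is the condition.

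\textbf{Step 3: conclusion and normalization.} Applying Aikawa's theorem yields $u/v\approx u(x^\ast)/v(x^\ast)$ in $\Omega\cap B_{1/2}$. By the Harnack chains of Step 1, $v(x^\ast)\approx1$ thanks to the normalization $v(\pm e_n/2)=1$. Likewise $u(x^\ast)$ lies between $C^{-1}\min u(\pm e_n/2)$ and $C\max u(\pm e_n/2)$, which gives the stated two-sided bound.

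I expect the main obstacle to be Step 1 near the edge. There one must check carefully that the corkscrew and chain constants stay uniform up to $\mathcal{E}$ and depend only on $L$. This is exactly where the slit geometry differs from a thick Lipschitz domain, because the two sides only communicate through a neighborhood of the edge. If the direct Aikawa route proved awkward, the fallback would be the classical two-step argument: a Carleson estimate from boundary H\"older decay (via Step 2), followed by the Caffarelli--Fabes--Mortola--Salsa comparison iteration.
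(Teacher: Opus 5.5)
Your Step 1 fails, and the whole argument rests on it: $\Omega\cap B_1$ is \emph{not} a uniform domain. Take $x=(0'',-\tfrac14,\Gamma(0'',-\tfrac14)+\delta)$ and $y=(0'',-\tfrac14,\Gamma(0'',-\tfrac14)-\delta)$. Then $|x-y|=2\delta$, and both points are at distance $\approx\delta$ from $\S$. Any curve in $\Omega$ joining them must cross the graph $\{x_n=\Gamma(x')\}$ at a point with $x_{n-1}>\gamma(x'')$, that is, it must go around the edge. Such a point is at distance $\gtrsim 1/(1+L)$ from $x$. So the curve has length $\gtrsim 1$, and every Harnack chain from $x$ to $y$ has $\gtrsim\log(1/\delta)$ balls. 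Your claimed bound $C(L)\log\bigl(2+|x-y|/\min\{d(x),d(y)\}\bigr)$ is $O(1)$ for this pair. The fact you single out yourself, that the two faces only communicate around $\mathcal{E}$, is exactly what destroys uniformity.

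This is not a removable technicality. Aikawa's theorem delivers a local boundary Harnack principle at every $\xi\in\partial\Omega$ and every small scale, with a single corkscrew reference point, and that conclusion is false here. For $\xi\in\S$ with $\dist(\xi,\mathcal{E})\gg r$, the set $\Omega\cap B_{Ar}(\xi)$ has two components. Positive harmonic functions vanishing on $\S\cap B_{Ar}(\xi)$ can then be multiplied by independent constants on each component. Hence $u/v$ near $\xi$ is not controlled by $u(A_r(\xi))/v(A_r(\xi))$. Your fallback has the same defect, since the one-point Carleson estimate fails at such $\xi$ for the same reason.

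To repair the argument you must treat boundary points on the two faces as distinct. One option is the boundary Harnack principle for \emph{inner} uniform domains, where the Euclidean distance is replaced by the inner distance of $\Omega$. With respect to that distance, a slit domain with small $L$ does satisfy a chain condition of the kind you state in Step 1. The other option is to run the Carleson estimate plus the Caffarelli--Fabes--Mortola--Salsa iteration with one corkscrew point per face at non-edge slit points, merging the faces only at scales comparable to the distance to the edge.

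The paper does not prove the statement; it imports it from \cite{DS20}. That proof iterates an almost-positivity lemma of the same type as \cref{lem:almost_pos_sublem,lem:amost_pos_int_cond}. The normalization of $v$ at \emph{both} points $\pm\frac{e_n}{2}$, together with the $\min/\max$ of $u$ over them, is how the two faces enter. Your Step 2 (capacity density via projection, using that contractions do not increase capacity) is fine, but it does not address this issue.
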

	
	\begin{remark} \label{rmk:bdry_harnack_one_side_condition}
		As observed in \cite[Section 3.3]{DS20}, the conditions on $u$ and $v$ at the point $-\frac{e_n}{2}$ can be dropped from the statement of \cref{th:bdry_harnack} provided that there is a Harnack chain (that is, overlapping balls whose radius is comparable to its distance to $\partial \Omega$) of radius $\mu$ connecting the points $\pm\frac{e_n}{2}$, and in such case the constant $C$ will also depend on $\mu$.
	\end{remark}

	\begin{remark}\label{rmk:bdry_harnack_neg}
		As a consequence of \cref{th:bdry_harnack}, if $u$ is any function, not necessarily non-negative, vanishing on $\S \cap B_1$ and such that $\Delta u = 0$ in $\Omega \cap B_1$, and $v$ is as in \cref{th:bdry_harnack}, 
		\begin{equation*}
			\frac{u}{v} \leq C\|u\|_{L^\infty(\Omega \cap B_1)} \quad\text{in } \Omega \cap B_{1/2}.
		\end{equation*}
		Indeed, writing $u = u_+ - u_-$, we may consider the harmonic replacements $h_{\pm}$ of $u_{\pm}$ in $\Omega \cap B_1$:
		\begin{equation*}
			\left\{
			\begin{aligned}
				\Delta h_{\pm} &= 0 \quad&\text{in } \Omega \cap B_1, \\
				h_{\pm} &= u_{\pm} &\text{on } \partial(\Omega \cap B_1).
			\end{aligned}
			\right.
		\end{equation*}
		Then, by construction $h_{\pm} = u_{\pm} = 0$ on $\S \cap B_1$ and by the comparison principle $u = h_+ - h_-$ in $\Omega \cap B_1$. Furthermore, since $h_{\pm} = u_{\pm}$ on $\partial(\Omega \cap B_1)$, the maximum principle yields $h_{\pm} \geq 0$ and $h_{\pm} \leq \|u\|_{L^\infty(\Omega \cap B_1)}$. Hence, applying \cref{th:bdry_harnack} to $h_{\pm}$ we obtain, in $\Omega \cap B_{1/2}$,
		\begin{align*}
			\frac{u}{v} \! \leq \!
			\frac{|u|}{v} \! \leq \! 
			\frac{h_+ + h_-}{v} \! \leq
			C\Big( \! \max\big\{h_+({\textstyle \frac{e_n}{2}}),h_+({\textstyle -\frac{e_n}{2}})\big\} + \max\big\{h_-({\textstyle \frac{e_n}{2}}),h_-({\textstyle -\frac{e_n}{2}})\big\} \Big) \! \leq
			C\|u\|_{L^\infty(\Omega \cap B_1)}.
		\end{align*}
	\end{remark}

	
	\section{Regularized complex coordinates in Lipschitz slit domains} \label{sec:reg_complex_coord}
	
	The main technical contribution of this paper is the construction of a regularized flattening of Lipschitz slit domains. In this section, we build a change of coordinates mapping
	$\Omega=\R^n\setminus \S$ onto the model slit domain $\R^n\setminus \H$, while retaining quantitative control up to second derivatives.
	
	The naive flattening
	\begin{equation*}
		\Omega = \R^n \setminus \S \ni y = (y'', y_{n-1}, y_n) 
		\mapsto 
		(y'', y_{n-1} - \gamma(y''), y_n - \Gamma(y')) \in \R^n \setminus \H,
	\end{equation*}
	already maps the domains correctly, but is only Lipschitz and therefore unsuitable for our applications to second-order PDE. The construction below provides a regularized substitute that remains close to this natural map, but additionally possesses the second-order estimates required for the barrier arguments of the next section.
	
	Let us introduce $\sigma$ as the quantity that controls the oscillations of $\gamma, \Gamma$ at scale of $\rho(x) = ((x_{n-1})_+^2 + x_n^2)^{1/2}$ from \eqref{def:rho}:
	\begin{equation} \label{def:sigma}
		\sigma(x)
		\coloneqq
		\norm{\nabla\gamma}_{L^\infty(B''_{\rho(x)}(x''))}
		+ \norm{\nabla\Gamma}_{L^\infty(B'_{2\rho(x)}(x'', x_{n-1} + \gamma(x'')))}.
	\end{equation}
	With this in mind, the main result of this section, which is a more precise version of \cref{th:complex_coordinates_intro}, reads:
	
	\begin{theorem} \label{th:complex_coordinates}
		Let $\Omega \subset \R^n$ be a Lipschitz slit domain as in \cref{def:lip_slit_domain} with Lipschitz constant $L$. If $L > 0$ is small enough, then there exist functions $\zeta_1, \zeta_2 : \Omega \rightarrow \R$, with $\zeta_i \in C^{1, 1}_{\mathrm{loc}}(\Omega)$, which map $\Omega$ to our model slit domain, namely:
		\begin{equation} \label{eq:zeta_maps}
			y = (y'', y_{n-1}, y_n) \in \Omega 
			\; \implies \; 
			(y'', \zeta_1(y), \zeta_2(y)) \in \R^n \setminus \H,
		\end{equation}
		Moreover, there exists $C > 0$ depending only on $n$, so that if we denote $\zeta(y) \coloneqq (\zeta_1(y), \zeta_2(y))$, the following estimates hold:
		\begin{alignat}{3}
			\label{eq:zeta_1_gradient}
			\abs{\nabla \zeta_1(y) - e_{n-1}}
			&\leq
			C \, \sigma(y'', \zeta_1(y), \zeta_2(y))
			&&\leq
			CL,
			\qquad &&\forall \, y \in \Omega, 
			\\
			\label{eq:zeta_2_gradient}
			\abs{\nabla \zeta_2(y) - e_n}
			&\leq
			C \, \sigma(y'', \zeta_1(y), \zeta_2(y))
			&&\leq
			CL, 
			\qquad &&\forall \, y \in \Omega,
			\\
			\label{eq:zeta_hessian}
			\abs{D^2 \zeta_1(y)},
			\abs{D^2 \zeta_2(y)}
			&\leq
			C \, \frac{\sigma(y'', \zeta_1(y), \zeta_2(y))}{\rho(\zeta(y))}
			&&\leq
			C\frac{L}{\rho(\zeta(y))},
			\qquad && \text{a.e. } \, y \in \Omega,
		\end{alignat}
		where we recall that $\rho$ and $\sigma$ are defined in \eqref{def:rho} and \eqref{def:sigma}, respectively.
		Moreover, if we define 
		\begin{equation} \label{def:sigma_tilde}
			\widetilde{\sigma}(x)
			\coloneqq
			\norm{\nabla\gamma}_{L^\infty(B''_{\rho(x)}(x''))}
			+ \norm{\nabla\Gamma}_{L^\infty(B'_{C\rho(x)}(x'', x_{n-1} + \gamma(x'')))}
		\end{equation}
		then the following pointwise estimates also hold:
		\begin{alignat}{2}
			\label{eq:zeta_1_pointwise}
			\abs{\zeta_1(y) - (y_{n-1} - \gamma(y''))}
			&\leq
			C \, \sigma(y'', \zeta_1(y), \zeta_2(y)) \, \rho(\zeta(y))
			& \leq
			CL \, \rho(\zeta(y)), 
			\qquad & \forall \, y \in \Omega, 
			\\
			\label{eq:zeta_2_pointwise}
			\abs{\zeta_2(y) - (y_n - \Gamma(y'))}
			&\leq
			C \, \widetilde{\sigma}(y'', \zeta_1(y), \zeta_2(y)) \, \rho(\zeta(y))
			& \leq
			CL \, \rho(\zeta(y)),
			\qquad & \forall \, y \in \Omega, 
		\end{alignat}
	\end{theorem}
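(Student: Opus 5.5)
The estimates are stated in terms of $\sigma(y'',\zeta_1(y),\zeta_2(y))$, i.e.\ at the \emph{flat} point $x=\Phi(y)$. So I would build the \emph{forward} map $\Psi:\R^n\to\R^n$, from model coordinates $x$ to physical coordinates $y$, by regularizing $\gamma,\Gamma$ at the scale $\rho(x)$. I would then define $\zeta$ as the last two components of $\Psi^{-1}$.

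\textbf{Construction.} Fix a standard mollifier $\varphi$ and a small $\theta\in(0,1)$. Set
\begin{equation*}
\widetilde\gamma(x)\coloneqq(\gamma*\varphi_{\theta\rho(x)})(x''),\qquad
\widetilde\Gamma(x)\coloneqq(\Gamma*\varphi_{\theta\rho(x)})\big(x'',x_{n-1}+\widetilde\gamma(x)\big),
\end{equation*}
with the convention that the convolution at scale $0$ is the function itself. Define
\begin{equation*}
\Psi(x)\coloneqq\big(x'',\,x_{n-1}+\widetilde\gamma(x),\,x_n+\widetilde\Gamma(x)\big).
\end{equation*}

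\textbf{$\Psi$ is a bi-Lipschitz homeomorphism of $\R^n$.} Differentiating the mollification in both space and scale gives $|\nabla\widetilde\gamma|,|\nabla\widetilde\Gamma|\le CL$. The scale derivative is controlled because $\partial_t(\gamma*\varphi_t)=(\nabla\gamma*\psi_t)$ for some $\psi$ with zero mean, and $\rho$ is $1$-Lipschitz. Hence $\Psi-\mathrm{Id}$ is globally Lipschitz with constant $CL<1$. Standard arguments then show $\Psi$ is a bi-Lipschitz homeomorphism of $\R^n$.

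\textbf{$\Psi$ maps the model slit onto $\S$.} On $\H$ we have $\rho=0$, so $\Psi(x)=(x'',x_{n-1}+\gamma(x''),\Gamma(x'',x_{n-1}+\gamma(x'')))$. As $x_{n-1}\le0$ ranges over $(-\infty,0]$, these are exactly the points of $\S$ over $x''$. Since $\Psi$ is a bijection with $\Psi(\H)=\S$, it maps $\R^n\setminus\H$ onto $\Omega$. Setting $(y'',\zeta_1,\zeta_2)\coloneqq\Psi^{-1}(y)$ gives \eqref{eq:zeta_maps}.

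\textbf{Derivative estimates for $\Psi$.} For $x\notin\H$, $\rho$ is $C^{1,1}_{\mathrm{loc}}$ with $|D^2\rho|\le C/\rho$. The derivatives of the mollified functions only involve $\nabla\gamma$ on $B''_{\theta\rho(x)}(x'')$ and $\nabla\Gamma$ on a ball of radius $\theta\rho(x)$ around $(x'',x_{n-1}+\widetilde\gamma(x))$. This second ball lies inside $B'_{2\rho(x)}(x'',x_{n-1}+\gamma(x''))$, because $|\widetilde\gamma(x)-\gamma(x'')|\le\theta\rho(x)\|\nabla\gamma\|$. This containment is why \eqref{def:sigma} uses radius $2\rho$. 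One then gets
\begin{equation*}
|D\Psi(x)-I|\le C\sigma(x),\qquad |D^2\Psi(x)|\le C\sigma(x)/\rho(x)\quad\text{a.e.}
\end{equation*}
The Hessian bound uses three ingredients: one derivative falls on the kernel at scale $\theta\rho$ (cost $1/\rho$), combined with the zero-mean structure so that only $\nabla\gamma$ appears; the bound $|D^2\rho|\le C/\rho$; and the chain rule through the inner argument $x_{n-1}+\widetilde\gamma$. This gives $C^{1,1}_{\mathrm{loc}}$ regularity of $\Psi$ away from $\H$.

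\textbf{Estimates for $\zeta$.} Inverting, $D\zeta(y)$ is the last two rows of $D\Psi(x)^{-1}=I+O(\sigma(x))$ with $x=\Psi^{-1}(y)=(y'',\zeta(y))$. This gives \eqref{eq:zeta_1_gradient} and \eqref{eq:zeta_2_gradient}. The second-order inverse-function formula
\begin{equation*}
D^2(\Psi^{-1})=-D\Psi^{-1}\,D^2\Psi\,(D\Psi^{-1},D\Psi^{-1})
\end{equation*}
(the computation deferred to \cref{sec:sec_ord_IFT}) then gives \eqref{eq:zeta_hessian}, and $C^{1,1}_{\mathrm{loc}}$ regularity transfers to $\zeta$.

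\textbf{Pointwise estimates.} For \eqref{eq:zeta_1_pointwise}, note that
\begin{equation*}
\zeta_1(y)-(y_{n-1}-\gamma(y''))=\gamma(x'')-\widetilde\gamma(x),
\end{equation*}
whose modulus is at most $\theta\rho(x)\|\nabla\gamma\|_{L^\infty(B''_{\rho(x)}(x''))}$. Similarly, $\zeta_2(y)-(y_n-\Gamma(y'))=\Gamma(y')-\widetilde\Gamma(x)$ with $y'=(x'',x_{n-1}+\widetilde\gamma(x))$. This is a mollification error of $\Gamma$ at scale $\theta\rho$ around $y'$, bounded by $\theta\rho$ times $\nabla\Gamma$ on a ball contained in $B'_{C\rho}(x'',x_{n-1}+\gamma(x''))$, which gives \eqref{eq:zeta_2_pointwise} with $\widetilde\sigma$.

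\textbf{Main obstacle.} I expect the Hessian bound for $\widetilde\Gamma$ to be the hardest step. It involves a mollification with variable scale $\theta\rho(x)$, evaluated at a point that itself depends on $x$ through $\widetilde\gamma$. All cross terms must be shown to carry a factor $\nabla\gamma$ or $\nabla\Gamma$ measured on the right balls. The scale-derivative terms only do so after using that the kernels $\partial_t\varphi_t$ have zero mean, so each one must be rewritten as a convolution of $\nabla\gamma$ or $\nabla\Gamma$ rather than of $\gamma$ or $\Gamma$. I would first do the case $n=2$, where $x''$ is absent, to check the bookkeeping, and then add the $x''$-dependence.
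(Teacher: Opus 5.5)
Your proposal is correct and follows essentially the paper's route. The paper also regularizes $\gamma,\Gamma$ at the variable scale $\rho(x)$ to build the forward map $P$ (your $\Psi$), defines $\zeta$ as the last two components of $P^{-1}$, gets the derivative bounds from a Neumann series and the second-order inverse-function formula, and gets the pointwise bounds from $P\circ P^{-1}=\mathrm{id}$. The differences are minor streamlinings: the harmless factor $\theta$; obtaining global bijectivity directly because $\Psi-\mathrm{Id}$ is a contraction (the paper instead uses an open/closed/connectedness argument on $\R^n\setminus\H$, and only later shows $P$ is a bijection of $\R^n$); and your observation that $\widetilde\Gamma(\Psi^{-1}(y))$ is exactly the mollification of $\Gamma$ at $y'$, which shortens the proof of \eqref{eq:zeta_2_pointwise} and in fact gives it with $\sigma$ in place of $\widetilde\sigma$.
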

	
	The rest of the section is devoted to the proof of this result. We will proceed by a regularization argument which will undergo two different stages: first we will regularize the edge of the slit, and then the rest of the slit. Both regularizations will be made at the variable scale dictated by $\rho(x)$, which is deeply connected to the geometry of slits.
	
	
	\subsection{The mollification scale, and mollifiers} \label{sec:mollifiers}
	
	The regularity estimates proved in this section rely on a variable-scale mollification. Choosing the mollification radius to be $\rho$ (defined in \eqref{def:rho}) ensures that the smoothing becomes finer near the slit, while remaining quantitative because of the following estimates:
	
	\begin{lemma} \label{lem:estimates_rho}
		If $\rho$ is defined as in \eqref{def:rho}, then $\rho \in C^1(\R^n \setminus \H)$, and actually it holds 
		\begin{equation} \label{eq:Drho}
			\abs{\nabla \rho(x)} = 1
			\qquad 
			\forall \, x \in \R^n \setminus \H.
		\end{equation}
		Moreover, it also holds 
		\begin{equation} \label{eq:D2rho}
			\abs{D^2 \rho(x)} \leq \frac{1}{\rho(x)},
			\qquad
			\forall \, x \in \R^n \setminus (\H \cup \{x_{n-1} = 0\}).
		\end{equation}
	\end{lemma}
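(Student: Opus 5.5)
The plan is to treat $\rho$ as a function of the two variables $(x_{n-1},x_n)$ only, since it does not depend on $x''$. This reduces everything to a planar computation, and I would split $\R^n\setminus\H$ into the two regions already singled out in the remark after \eqref{def:rho}: the region $\{x_{n-1}<0,\ x_n\neq 0\}$, where $\rho=|x_n|$, and the region $\{x_{n-1}>0\}$, where $\rho=\sqrt{x_{n-1}^2+x_n^2}$. The common interface inside $\R^n\setminus\H$ is $\{x_{n-1}=0,\ x_n\neq0\}$.

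For the gradient claim I would compute directly in each region.
\begin{itemize}
\item In $\{x_{n-1}<0,\ x_n\ne0\}$ we have $\nabla\rho=\operatorname{sign}(x_n)e_n$.
\item In $\{x_{n-1}>0\}$ we have $\nabla\rho=(x_{n-1}e_{n-1}+x_ne_n)/\rho$.
\end{itemize}
In both cases $|\nabla\rho|=1$, which gives \eqref{eq:Drho} away from the interface.

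To obtain $\rho\in C^1(\R^n\setminus\H)$, the cleanest route is to write $\rho=\sqrt{\phi}$ with $\phi=(x_{n-1})_+^2+x_n^2$. Since $t\mapsto t_+^2$ is $C^1$ with derivative $2t_+$, we get $\phi\in C^1(\R^n)$ and
$$\nabla\phi=2(x_{n-1})_+e_{n-1}+2x_ne_n .$$
Moreover $\phi>0$ exactly on $\R^n\setminus\H$, so $\rho=\sqrt\phi$ is $C^1$ there, with
$$\nabla\rho=\frac{(x_{n-1})_+e_{n-1}+x_ne_n}{\rho}.$$
This formula has norm $\sqrt{(x_{n-1})_+^2+x_n^2}/\rho=1$ everywhere in $\R^n\setminus\H$, interface included. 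This simultaneously handles continuity across $\{x_{n-1}=0\}$ and the identity \eqref{eq:Drho} at every point.

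For the Hessian bound \eqref{eq:D2rho}, I would again work separately in the two open regions, which is exactly why the set $\{x_{n-1}=0\}$ is excluded.
\begin{itemize}
\item In $\{x_{n-1}<0\}$, $\rho=|x_n|$ is affine in $x_n$ on each side of $\{x_n=0\}$, and $\{x_n=0\}$ lies in $\H$ there. Hence $D^2\rho=0$.
\item In $\{x_{n-1}>0\}$, $\rho$ is the Euclidean norm $|z|$ in the plane $z=(x_{n-1},x_n)$. Its Hessian is $\frac1{|z|}\big(I_2-\frac{z\otimes z}{|z|^2}\big)$, padded by zeros in the $x''$ directions. This is $\frac1\rho$ times an orthogonal projection, so its operator norm equals $1/\rho$.
\end{itemize}
Together these give \eqref{eq:D2rho}.

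The only mildly delicate point is the $C^1$ regularity across the interface $\{x_{n-1}=0,\ x_n\ne0\}$, and the $\sqrt\phi$ representation disposes of it without any one-sided limit argument. If a Frobenius rather than operator norm is intended for $|D^2\rho|$, the same computation gives $1/\rho$ as well, since the projection has rank one.
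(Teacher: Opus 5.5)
Your proposal is correct and follows the paper's proof: an explicit computation of $\nabla\rho$ and $D^2\rho$ in the regions $\{x_{n-1}>0\}$ and $\{x_{n-1}<0\}$, with $\{x_{n-1}=0\}$ excluded from the Hessian bound. The only difference is that you prove $C^1$ regularity across $\{x_{n-1}=0,\ x_n\neq 0\}$ by writing $\rho=\sqrt{(x_{n-1})_+^2+x_n^2}$ with a $C^1$ radicand that is positive exactly off $\H$, which gives continuity of $\nabla\rho$ and $\abs{\nabla\rho}=1$ on the interface at once, whereas the paper only says this ``one can check''.
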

	\begin{proof}
		This follows by direct explicit computation. Indeed, it holds, for $x_{n-1} > 0$:
		\begin{equation*}
			\partial_{x_{n-1}} \rho(x) = \frac{x_{n-1}}{\rho(x)},
			\qquad
			\partial_{x_n} \rho(x) = \frac{x_n}{\rho(x)},
		\end{equation*}
		and similarly for the second derivatives:
		\begin{equation*}
			\partial_{x_{n-1} x_{n-1}} \rho(x) = \frac{x_n^2}{\rho(x)^3},
			\quad
			\partial_{x_n x_n} \rho(x) = \frac{x_{n-1}^2}{\rho(x)^3},
			\quad
			\partial_{x_{n-1} x_n} \rho(x) = - \frac{x_{n-1}x_n}{\rho(x)^3}.
		\end{equation*}
		In turn, if $x_{n-1} < 0$, it holds $\partial_{x_{n-1}} \rho(x) = 0$, $\partial_{x_n} \rho(x) = \operatorname{sgn}(x_n)$, $D^2\rho(x) = 0$.
		
		The continuity of the first derivatives (in $\R^n \setminus \H$) is then obvious when $x_{n-1} \neq 0$, and one can check that it is also true whenever $x_{n-1} = 0$. It is also trivial to verify \eqref{eq:Drho} from these expressions. In turn, the second derivatives (concretely $\partial_{x_{n-1}x_{n-1}} \rho$) are not continuous across $\{x_{n-1} = 0\}$; but outside this region \eqref{eq:D2rho} readily follows.
	\end{proof}
	
	Now we define two mollifiers: one $n-2$ dimensional, and the other one $n-1$ dimensional. They will help us regularize the edge and the slit, respectively.
	Concretely, choose 
	\begin{equation} \label{def:eta''}
		\text{$\eta'' \in C^\infty_\mathrm{c}(B''_1)$ \, satisfying \, $\eta'' \geq 0$, \, $\int_{\R^{n-2}} \eta'' = 1$, \, and \, $\norm{\eta''}_{C^2(\R^{n-2})} \leq C(n)$.}
	\end{equation}
	Similarly, choose 
	\begin{equation} \label{def:eta'}
		\text{$\eta' \in C^\infty_\mathrm{c}(B'_1)$ \, satisfying \, $\eta' \geq 0$, \, $\int_{\R^{n-1}} \eta' = 1$, \, and \, $\norm{\eta'}_{C^2(\R^{n-1})} \leq C(n)$.}
	\end{equation}
	Given a real number $t > 0$, we rescale these by
	\begin{equation} \label{def:rescaled}
		\eta''_t(x'') \coloneqq t^{-(n-2)} \eta'' \left( \frac{x''}{t} \right),
		\quad
		\eta'_t(x') \coloneqq t^{-(n-1)} \eta' \left( \frac{x'}{t} \right),
		\quad
		x'' \in \R^{n-2},\ x' \in \R^{n-1}.
	\end{equation}
	
	Some classical and elementary properties that we will repeatedly use are the following.
	
	\begin{lemma} \label{lem:mollifiers_basic}
		Let the mollifiers $\eta'', \eta'$ be defined in \eqref{def:eta''} and \eqref{def:eta'}, and their rescalings in \eqref{def:rescaled}. Then, the following are true, for any $t > 0$, with $C > 0$ only depending on $n$ (in particular, independent of $t$):
		\begin{enumerate}
			\item \label{it:moll_0} $\supp \eta''_t \subset B''_t$ and $\supp \eta'_t \subset B'_t$,
			\item \label{it:moll_1} $\int_{\R^{n-2}} \eta''_t = 1$ and $\int_{\R^{n-1}} \eta'_t = 1$,
			\item \label{it:moll_2} $\int_{\R^{n-2}} \partial_t \eta''_t = 0$ and $\int_{\R^{n-1}} \partial_t \eta'_t = 0$,
			\item \label{it:moll_3} $\int_{\R^{n-2}} \abs{\partial_t \eta''_t} \leq C/t$ and $\int_{\R^{n-1}} \abs{\partial_t \eta'_t} \leq C/t$,
			\item \label{it:moll_4} $\int_{\R^{n-2}} \abs{\partial_{x_j} \eta''_t} \leq C/t$ and $\int_{\R^{n-1}} \abs{\partial_{x_j} \eta'_t} \leq C/t$, for any index $j$,
			\item \label{it:moll_5} $\int_{\R^{n-2}} \abs{\partial_{tt} \eta''_t} \leq C/t^2$ and $\int_{\R^{n-1}} \abs{\partial_{tt} \eta'_t} \leq C/t^2$.
		\end{enumerate}
	\end{lemma}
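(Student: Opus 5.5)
The plan is to reduce every item to the scaling identity $\eta''_t(x'') = t^{-(n-2)}\eta''(x''/t)$ (and its analogue for $\eta'$), combined with the change of variables $x'' = t z''$. I write the argument for $\eta''$ in dimension $d = n-2$; the one for $\eta'$ in dimension $d = n-1$ is identical.

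Items \eqref{it:moll_0} and \eqref{it:moll_1} are immediate. Since $\supp\eta''\subset B''_1$, we have $\eta''_t(x'')\neq0$ only if $|x''/t|<1$. The substitution $x''=tz''$ gives $\int\eta''_t=\int\eta''=1$.

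For \eqref{it:moll_3} and \eqref{it:moll_5}, I compute the $t$-derivatives explicitly:
\begin{equation*}
\partial_t\eta''_t(x'') = -t^{-d-1}\Big(d\,\eta''(z'') + z''\cdot\nabla\eta''(z'')\Big)\Big|_{z''=x''/t}.
\end{equation*}
Differentiating once more produces $t^{-d-2}$ times a fixed combination of $\eta''$, $z''\cdot\nabla\eta''$ and $z''^{\top}D^2\eta''\,z''$, all evaluated at $x''/t$. Each of these expressions is supported in $B''_1$, where $|z''|\le1$, and is bounded by $C\|\eta''\|_{C^2}\le C(n)$. Changing variables absorbs the factor $t^{-d}$, so
\begin{equation*}
\int|\partial_t\eta''_t|\le \frac{C}{t}\int_{B''_1}\big(d|\eta''|+|\nabla\eta''|\big)\le\frac{C}{t},
\end{equation*}
and in the same way $\int|\partial_{tt}\eta''_t|\le C/t^2$.

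For \eqref{it:moll_4}, we have $\partial_{x_j}\eta''_t(x'')=t^{-d-1}(\partial_j\eta'')(x''/t)$, so $\int|\partial_{x_j}\eta''_t|=t^{-1}\int|\partial_j\eta''|\le C/t$. The constant depends only on $n$ through the $C^2$ bound and the volume of the unit ball.

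For \eqref{it:moll_2}, I differentiate the identity $\int\eta''_t=1$ in $t$. To justify exchanging the derivative and the integral, restrict to $t\in[t_0/2,2t_0]$. On this range the integrands $\partial_t\eta''_t$ are continuous, supported in the fixed ball $B''_{2t_0}$, and uniformly bounded by the formula above, so dominated convergence applies. Alternatively, one can note directly that $d\,\eta''+z''\cdot\nabla\eta''=\operatorname{div}(z''\eta'')$ integrates to zero because $\eta''$ has compact support.

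There is no real obstacle; the only point needing care is this justification of differentiation under the integral sign, and the uniform bounds make it routine.
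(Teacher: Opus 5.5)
Your proof is correct and follows essentially the same route as the paper: the scaling identity with the change of variables $x''=tz''$, explicit formulas for $\partial_t\eta''_t$ and $\partial_{tt}\eta''_t$ in terms of $\eta''$, $z''\cdot\nabla\eta''$ and $z''^{\top}D^2\eta''\,z''$ on $B''_1$, and the cancellation $\int\partial_t\eta''_t=0$ obtained by differentiating $\int\eta''_t=1$. The one addition is your justification of differentiating under the integral sign, either by domination or via the identity $d\,\eta''+z''\cdot\nabla\eta''=\operatorname{div}(z''\eta'')$; the paper leaves this step implicit.
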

	\begin{proof}
		Properties \eqref{it:moll_0} and \eqref{it:moll_1} follow immediately from the definition and a change of variables, and \eqref{it:moll_2} follows from differentiating \eqref{it:moll_1}. For \eqref{it:moll_3}, we just differentiate and change variables:
		\begin{align*}
			\int_{\R^{n-2}} \abs{\partial_t \eta''_t(x'')} dx''
			&\lesssim
			\int_{B''_t} t^{-(n-2)-1} \eta'' \left( \frac{x''}{t} \right) dx''
			+ \int_{B''_t} t^{-(n-2)} \abs{\nabla \eta'' \left( \frac{x''}{t} \right) \cdot \frac{x''}{t^2}} dx''
			\\ &\leq
			\frac{1}{t} \left( \int_{B''_1} \eta''(y'') dy'' + \int_{B''_1} \abs{\nabla \eta''(y'')} \abs{y''} dy'' \right)
			\lesssim
			\frac{1}{t},
		\end{align*}
		and similarly for $\eta'_t$. One gets \eqref{it:moll_4} in the same way. Differentiating once more,
		\begin{align*}
			\int_{\R^{n-2}} &\abs{\partial_{tt} \eta''_t(x'')} dx''
			\lesssim
			\int_{B''_t} t^{-(n-2)-2} \eta'' \left( \frac{x''}{t} \right) dx''
			+ \int_{B''_t} t^{-(n-2)-1} \abs{\nabla \eta'' \left( \frac{x''}{t} \right) \cdot \frac{x''}{t^2}} dx''
			\\ &\qquad \qquad \qquad \qquad  + 
			\int_{B''_t} t^{-(n-2)} \abs{\left(  \frac{x''}{t^2} \right)^\top D^2 \eta'' \left( \frac{x''}{t} \right) \frac{x''}{t^2}} dx''
			\\ &\lesssim
			\frac{1}{t^2} \left( \int_{B''_1} \eta''(y'') dy'' + \int_{B''_1} \abs{\nabla \eta''(y'')} \abs{y''} dy'' + \int_{B''_1} \abs{D^2 \eta''(y'')} \abs{y''}^2 dy'' \right)
			\lesssim
			\frac{1}{t^2},
		\end{align*}
		which establishes \eqref{it:moll_5} for $\eta''_t$, and one argues similarly for $\eta'_t$.
	\end{proof}
	
	
	\subsection{Regularization of the edge}
	
	Let us first regularize, by mollification at the variable scale $\rho(x)$, the edge of the slit, namely 
	\begin{equation*}
		\mathcal{E} \coloneqq \left\{(x'', \gamma(x''), \Gamma(x'', \gamma(x'')) : x'' \in \R^{n-2}\right\}. 
	\end{equation*}
	
	We consider the following shorthand notation for the size of the oscillations of $\gamma$ at scale $\rho$:
	\begin{equation} \label{def:sigma_gamma}
		\sigma_\gamma(x) \coloneqq \norm{\nabla\gamma}_{L^\infty(B''_{\rho(x)}(x''))}.
	\end{equation}
	
	\begin{lemma} \label{lem:tgamma}
		Let $\gamma : \R^{n-2} \rightarrow \R$ be a Lipschitz function. With $\rho, \eta''$ as in \cref{subsec:notation,sec:mollifiers}, define $\tgamma : \R^n \setminus \H \rightarrow \R$ as the regularization of $\gamma$ given by the following convolution-like expression adapted to the scale $\rho(x)$:
		\begin{equation*}
			\tgamma(x)
			\coloneqq
			\eta''_{\rho(x)} * \gamma (x'')
			\coloneqq
			\int_{\R^{n-2}} \eta''_{\rho(x)} (w'') \, \gamma(x'' - w'') \, dw''.
		\end{equation*}
		Then $\tgamma$ fulfills the following estimates: 
		\begin{alignat}{2}
			\label{eq:tgamma_0}
			\abs{\tgamma(x) - \gamma(x'')} &\leq C \sigma_\gamma(x) \rho(x), 
			\qquad &&\forall \, x \in \R^n \setminus \H,\\
			\label{eq:tgamma_1}
			\abs{\nabla \tgamma(x)} &\leq C \sigma_\gamma(x), 
			&&\forall \, x \in \R^n \setminus \H,\\
			\label{eq:tgamma_2}
			\abs{D^2 \tgamma(x)} &\leq C \frac{\sigma_\gamma(x)}{\rho(x)},
			&& \forall \, x \in \R^n \setminus (\H \cup \{x_{n-1} = 0\}),
		\end{alignat}
		with constants only depending on $n$.
	\end{lemma}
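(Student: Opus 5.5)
The plan is to exploit the two basic cancellations of mollifiers, namely $\int \eta''_t = 1$ and $\int \partial_t \eta''_t = \int \partial_{x_j}\eta''_t = 0$ from \cref{lem:mollifiers_basic}. These let us replace $\gamma$ by $\gamma - \gamma(x'')$ (or by $\gamma$ minus a suitable affine function) inside every integral. All the integrals we meet are supported in $w'' \in B''_{\rho(x)}$, so only values of $\gamma$ on $B''_{\rho(x)}(x'')$ enter. On that ball $|\gamma(x''-w'') - \gamma(x'')| \le \sigma_\gamma(x)|w''| \le \sigma_\gamma(x)\rho(x)$ by the mean value inequality for Lipschitz functions. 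Note that $\tgamma(x) = G(x'', \rho(x))$ with $G(x'',t) \coloneqq \eta''_t * \gamma(x'')$. So the first step is to estimate the derivatives of $G$ in $(x'',t)$, and the second is to compose with $t = \rho(x)$ using \cref{lem:estimates_rho}.

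For \eqref{eq:tgamma_0}, write $\tgamma(x) - \gamma(x'') = \int \eta''_{\rho}(w'')\,(\gamma(x''-w'') - \gamma(x''))\,dw''$ and bound it by $\sigma_\gamma(x)\rho(x)$ directly.

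For first derivatives, differentiating under the integral (the integrand is smooth in $t>0$), we get
\[
\partial_t G = \int \partial_t\eta''_t(w'')\,(\gamma(x''-w'')-\gamma(x''))\,dw'' ,
\]
which is bounded by $(C/t)\,\sigma_\gamma t$ using \eqref{it:moll_3}. In $x''$ we move the derivative onto $\gamma$: $\nabla_{x''}G = \eta''_t * \nabla\gamma$, which is bounded by $\sigma_\gamma$. Then the chain rule gives $\nabla\tgamma = (\nabla_{x''}G,0,0) + \partial_t G\,\nabla\rho$, and $|\nabla\rho|=1$ yields \eqref{eq:tgamma_1}. Throughout we evaluate at $t=\rho(x)$, so $\sigma_\gamma$ is exactly $\sigma_\gamma(x)$.

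For second derivatives we need $|\partial^2_{x''}G|, |\partial_t\nabla_{x''}G|, |\partial_{tt}G| \lesssim \sigma_\gamma/t$. The mixed and pure $x''$ derivatives are handled by putting one derivative on $\gamma$ and one on the kernel. This gives $\partial_{x_i x_j}G = \int \partial_{x_j}\eta''_t(w'')\,\partial_i\gamma(x''-w'')\,dw''$, bounded by $(C/t)\sigma_\gamma$ via \eqref{it:moll_4}, and $\partial_t\partial_{x_i}G = \int \partial_t\eta''_t\,\partial_i\gamma$, bounded via \eqref{it:moll_3}. For $\partial_{tt}G$ we use $\int\partial_{tt}\eta''_t = 0$ and subtract $\gamma(x'')$ again, giving $(C/t^2)\,\sigma_\gamma t$ by \eqref{it:moll_5}. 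The chain rule then gives
\[
D^2\tgamma = D^2_{x''}G + 2\,\mathrm{sym}(\partial_t\nabla_{x''}G\otimes\nabla\rho) + \partial_{tt}G\,\nabla\rho\otimes\nabla\rho + \partial_tG\,D^2\rho .
\]
The last term is bounded by $(C\sigma_\gamma)(1/\rho)$ using \eqref{eq:D2rho}, which is only available off $\{x_{n-1}=0\}$; this is why \eqref{eq:tgamma_2} excludes that set.

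The main (mild) obstacle is justifying the differentiation under the integral sign, since $\gamma$ is only Lipschitz, together with the fact that $\sigma_\gamma$ must be evaluated on exactly the ball $B''_{\rho(x)}(x'')$. Compact support of $\eta''_t$ in $B''_t$ settles the latter. For the former, $\gamma$ is $W^{1,\infty}$, so the $x''$-derivatives can be transferred to $\gamma$ a.e. by the usual convolution identity, and the $t$-derivatives fall only on the smooth kernel, with dominated convergence for $t$ in compact subsets of $(0,\infty)$.
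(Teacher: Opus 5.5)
Your proposal is correct and follows essentially the same route as the paper: you write $\tgamma = G(x'',\rho(x))$ with $G(x'',t)=\eta''_t*\gamma(x'')$, and you bound the first and second derivatives of $G$ using the cancellations and bounds of \cref{lem:mollifiers_basic}. As in the paper, you subtract $\gamma(x'')$ for the pure $t$-derivatives, move one derivative onto $\gamma$ for the $x''$-derivatives, and conclude by the chain rule with \cref{lem:estimates_rho}, which is exactly why \eqref{eq:tgamma_2} excludes $\{x_{n-1}=0\}$; your justification of differentiating under the integral sign is a detail the paper leaves implicit.
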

	\begin{proof}
		To obtain \eqref{eq:tgamma_0}, use \cref{lem:mollifiers_basic}.\eqref{it:moll_1} to get cancellations inside the integrals:
		\begin{align*}
			\abs{\tgamma(x) - \gamma(x'')}
			& =
			\abs{ \int_{B''_{\rho(x)}} \eta''_{\rho(x)} (w'') \gamma(x'' - w'') dw'' - \gamma(x'')}
			\\ & =
			\abs{ \int_{B''_{\rho(x)}} \eta''_{\rho(x)} (w'') \left( \gamma(x'' - w'') - \gamma(x'') \right) dw''}
			\\ & \leq 
			\int_{B''_{\rho(x)}} \eta''_{\rho(x)} (w'') \norm{\nabla \gamma}_{L^\infty(B''_{\rho(x)}(x''))} \abs{w''} dw''
			\\ & \leq
			\sigma_\gamma (x) \rho(x) \int \eta''_{\rho(x)}(w'') \, dw''
			\\ & =
			\sigma_\gamma (x) \rho(x).
		\end{align*}
		
		To compute the derivatives of $\tgamma$, let us treat it as a composition $\tgamma = f \circ (x \mapsto (x'', \rho(x_{n-1}, x_n)))$, i.e. $f(x'', \rho) \coloneqq \eta''_\rho * \gamma(x'')$ for $x'' \in \R^{n-2}, \rho > 0$ (that is, for the moment, we treat $\rho$ as a number, and not a function). Then, we can estimate the derivatives of $f$:
		\begin{itemize}
			\item First, for $j = 1, \ldots, n-2$, it holds, since $\rho$ does not depend on $x_j$:
			\begin{equation*}
				\abs{\partial_{x_j} f(x'', \rho)}
				=
				\abs{ \int_{B''_\rho} \eta''_\rho (w'') \, \partial_{x_j} \gamma(x'' - w'') \, dw'' }
				\leq
				\norm{\nabla \gamma}_{L^\infty(B''_\rho(x''))}.
			\end{equation*} 
			
			\item In turn, for the derivative with respect to $\rho$, the cancellation from \cref{lem:mollifiers_basic}.\eqref{it:moll_2}, and later the bound from \eqref{it:moll_3}, yield
			\begin{align*}
				\abs{\partial_\rho f(x'', \rho)}
				&=
				\abs{ \int_{B''_\rho} \partial_\rho \eta''_\rho (w'') \, \gamma(x'' - w'') \, dw'' }
				\\ &=
				\abs{ \int_{B''_\rho} \partial_\rho \eta''_\rho (w'') \left( \gamma(x'' - w'') - \gamma(x'') \right) dw'' }
				\\ &\leq
				\norm{\nabla \gamma}_{L^\infty(B''_\rho(x''))}\rho \int \abs{ \partial_\rho \eta''_\rho}
				\lesssim
				\norm{\nabla \gamma}_{L^\infty(B''_\rho(x''))}.
			\end{align*}
			
			\item We can proceed similarly for the second derivatives of $f$. Indeed, if $j, k \in \{1, \ldots, n-2\}$, then an integration by parts (using that $\eta''_\rho$ vanishes over $\partial B_\rho''$) and \cref{lem:mollifiers_basic}.\eqref{it:moll_4} implies
			\begin{align*}
				\abs{\partial_{x_j x_k} f(x'', \rho)}
				&=
				\abs{ \int_{B''_\rho} \partial_{x_j} \eta''_{\rho} (w'')\,  \partial_{x_k} \gamma(x'' - w'') \, dw'' }
				\\ &\leq
				\norm{\nabla \gamma}_{L^\infty(B''_\rho(x''))} \int \abs{ \partial_{x_j} \eta''_\rho}
				\lesssim
				\frac{\norm{\nabla \gamma}_{L^\infty(B''_\rho(x''))}}{\rho}.
			\end{align*}
			
			\item In turn, if  $j \in \{1, \ldots, n-2\}$, by \cref{lem:mollifiers_basic}.\eqref{it:moll_3}, we verify
			\begin{align*}
				\abs{\partial_{x_j \rho} f(x'', \rho)}
				&=
				\abs{ \int_{B''_\rho} \partial_\rho \eta''_{\rho} (w'') \, \partial_{x_j} \gamma(x'' - w'') \, dw'' }
				\\ &\leq
				\norm{\nabla \gamma}_{L^\infty(B''_\rho(x''))} \int \abs{ \partial_\rho \eta''_\rho}
				\lesssim
				\frac{\norm{\nabla \gamma}_{L^\infty(B''_\rho(x''))}}{\rho}.
			\end{align*}
			
			\item Lastly, by using the cancellation from \cref{lem:mollifiers_basic}.\eqref{it:moll_2} (which also holds trivially for second derivatives of the mollifier), and then \eqref{it:moll_5}, we also estimate
			\begin{align*}
				\abs{\partial_{\rho \rho} f(x'', \rho)}
				&=
				\abs{ \int_{B''_\rho} \partial_{\rho \rho} \eta''_\rho (w'') \left( \gamma(x'' - w'') - \gamma(x'') \right) dw'' }
				\\ &\leq
				\norm{\nabla \gamma}_{L^\infty(B''_\rho(x''))} \rho \int \abs{ \partial_{\rho \rho} \eta''_\rho}
				\lesssim
				\frac{\norm{\nabla \gamma}_{L^\infty(B''_\rho(x''))}}{\rho}.
			\end{align*}
		\end{itemize}
		
		Now that we have a good control over the derivatives of $f$, let us obtain estimates for $\tgamma$ by the chain rule. We will employ repeatedly the estimates for $f$ just obtained, and the estimates for $\rho$ from \cref{lem:estimates_rho}. We also recall the definition of $\sigma_\gamma$ in \eqref{def:sigma_gamma}.
		\begin{itemize}
			\item First, if $j \in \{ 1, \ldots, n-2 \}$, we have
			\begin{equation*}
				\abs{\partial_{x_j} \tgamma(x)}
				=
				\abs{\partial_{x_j} f(x'', \rho(x_{n-1}, x_n))}
				\leq
				\sigma_\gamma(x).
			\end{equation*}
			
			\item In turn, if $j \in \{n-1, n\}$, then
			\begin{equation*}
				\abs{\partial_{x_j} \tgamma(x)}
				=
				\abs{\partial_\rho f(x'', \rho(x_{n-1}, x_n)) \, \partial_{x_j} \rho(x)}
				\lesssim
				\sigma_\gamma(x).
			\end{equation*}
			
			\item Now, for the second derivatives, if $j, k \in \{1, \ldots, n-2\}$, we have
			\begin{equation*}
				\abs{\partial_{x_j x_k} \tgamma(x)}
				=
				\abs{\partial_{x_j x_k} f(x'', \rho(x_{n-1}, x_n))}
				\lesssim
				\frac{\sigma_\gamma(x)}{\rho(x)}.
			\end{equation*}
			
			\item In turn, if $j \in \{1, \ldots, n-2\}$, $k \in \{n-1, n\}$, we have instead
			\begin{equation*}
				\abs{\partial_{x_j x_k} \tgamma(x)}
				=
				\abs{\partial_{x_j \rho} f(x'', \rho(x_{n-1}, x_n)) \, \partial_{x_k} \rho(x)}
				\lesssim
				\frac{\sigma_\gamma(x)}{\rho(x)}.
			\end{equation*}
			
			\item Lastly, if $j, k \in \{n-1, n\}$, it holds
			\begin{equation*}
				\abs{\partial_{x_j x_k} \tgamma(x)}
				=
				\abs{\partial_{\rho \rho} f(x'', \rho(x_{n-1}, x_n)) \, \partial_{x_j} \rho(x) \, \partial_{x_k} \rho(x)
					+ \partial_\rho f(x'', \rho(x_{n-1}, x_n)) \, \partial_{x_j x_k} \rho(x)}
				\lesssim
				\frac{\sigma_\gamma(x)}{\rho(x)}.
			\end{equation*}
		\end{itemize}
		
		We note that the estimates for the first derivatives of $\tgamma$ hold pointwise in $\R^n \setminus \H$ because so do the estimates for $\nabla \rho$ (see \eqref{eq:Drho}); but those for second derivatives of $\tgamma$ involve estimates for second derivatives of $\rho$, which are only true outside $\{x_{n-1}=0\}$ (see \eqref{eq:D2rho}), so \eqref{eq:tgamma_2} only holds in the same region, accordingly.
	\end{proof}
	
	
	\subsection{Regularization of the slit}
	
	After having regularized the edge, let us now regularize the whole slit by mollifying at the variable scale $\rho(x)$. As before, we set a similar notation to control the size of the oscillations of $\Gamma$ at the appropriate scale:
	\begin{equation} \label{def:sigma_Gamma}
		\sigma_\Gamma(x) \coloneqq \norm{\nabla\Gamma}_{L^\infty(B'_{2\rho(x)}(x'',x_{n-1} + \gamma(x'')))}.
	\end{equation}
	
	\begin{lemma} \label{lem:tGamma}
		Let $\Gamma : \R^{n-1} \rightarrow \R$ be a Lipschitz function. Let also $\gamma : \R^{n-2} \rightarrow \R$ be a Lipschitz function with constant $L > 0$. With $\rho, \eta'', \eta'$ as in \cref{subsec:notation,sec:mollifiers}, and $\tgamma$ associated to $\gamma$ as in \cref{lem:tgamma},
		define $\tGamma : \R^n \setminus \H \rightarrow \R$ by
		\begin{align*}
			\tGamma(x)
			&\coloneqq
			\eta'_{\rho(x)} * \Gamma (x'', x_{n-1} + \tgamma(x))
			\\ & \coloneqq
			\int_\R\int_{\R^{n-2}} \eta'_{\rho(x)} (w''.w_{n-1}) \, \Gamma(x'' - w'', x_{n-1} + \tgamma(x) - w_{n-1})  \, dw'' dw_{n-1}.
		\end{align*}
		If $L \leq 1$ is small enough, then the following estimates hold:
		\begin{align}
			\label{eq:tGamma_1}
			|\nabla \tGamma(x)| &\leq C \sigma_\Gamma(x), 
			\qquad \forall \, x \in \R^n \setminus \H,\\
			\label{eq:tGamma_2}
			|D^2 \tGamma(x)| &\leq C \frac{\sigma_\Gamma(x)}{\rho(x)},
			\qquad \forall \, x \in \R^n \setminus (\H \cup \{x_{n-1} = 0\}),
		\end{align}
		with constants only depending on $n$.
	\end{lemma}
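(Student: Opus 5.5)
We mimic the proof of \cref{lem:tgamma}. We write $\tGamma = F \circ \Psi$, where
\begin{equation*}
	F(x', t) \coloneqq \eta'_t * \Gamma(x') = \int_{\R^{n-1}} \eta'_t(w')\, \Gamma(x' - w')\, dw', \qquad x' \in \R^{n-1},\ t > 0,
\end{equation*}
and $\Psi(x) \coloneqq (x'', x_{n-1} + \tgamma(x), \rho(x))$. The plan is to estimate the derivatives of $F$ and of $\Psi$ separately and then combine them with the chain rule.

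\textbf{Derivatives of $F$.} Exactly as for $f$ in \cref{lem:tgamma}, we want to show that for $|x' - \bar x'| \le t$,
\begin{equation*}
	|\nabla_{x'} F|, \ |\partial_t F| \lesssim \norm{\nabla\Gamma}_{L^\infty(B'_{2t}(\bar x'))},
	\qquad
	|D^2_{x'} F|, \ |\partial_t\nabla_{x'} F|, \ |\partial_{tt} F| \lesssim \frac{\norm{\nabla\Gamma}_{L^\infty(B'_{2t}(\bar x'))}}{t}.
\end{equation*}
Each bound uses a different item of \cref{lem:mollifiers_basic}:
\begin{itemize}
	\item For $\nabla_{x'} F$, put the derivative on $\Gamma$ and use \eqref{it:moll_1}.
	\item For $\partial_t F$ and $\partial_{tt} F$, subtract $\Gamma(x')$ using the cancellation \eqref{it:moll_2}, then apply \eqref{it:moll_3} and \eqref{it:moll_5}.
	\item For the mixed derivatives, integrate by parts once and apply \eqref{it:moll_4} or \eqref{it:moll_3}.
\end{itemize}
In every case the integrand only involves $\nabla\Gamma$ on $B'_t(x') \subset B'_{2t}(\bar x')$.

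\textbf{Derivatives of $\Psi$.} By \cref{lem:estimates_rho} and \cref{lem:tgamma},
\begin{equation*}
	|\nabla\Psi| \lesssim 1 + \sigma_\gamma \lesssim 1 \quad (\text{since } L \le 1), \qquad |D^2\Psi| \lesssim \frac{1+\sigma_\gamma}{\rho} \lesssim \frac{1}{\rho},
\end{equation*}
the second bound holding off $\{x_{n-1}=0\}$.

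\textbf{Localization.} The main subtlety is that $F$ is evaluated at $(x'', x_{n-1} + \tgamma(x))$, whereas $\sigma_\Gamma(x)$ is defined on the ball centered at $\bar x' \coloneqq (x'', x_{n-1} + \gamma(x''))$ with radius $2\rho(x)$. By \eqref{eq:tgamma_0},
\begin{equation*}
	|\tgamma(x) - \gamma(x'')| \le C\sigma_\gamma(x)\rho(x) \le CL\rho(x).
\end{equation*}
Hence, if $L$ is small enough that $CL \le 1$, the mollification ball $B'_{\rho(x)}(x'', x_{n-1} + \tgamma(x))$ lies inside $B'_{2\rho(x)}(\bar x')$. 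This is the only place where the smallness of $L$ enters. Consequently all the bounds on $F$ and its derivatives at the point $\Psi(x)$ are controlled by $\sigma_\Gamma(x)$ and $\sigma_\Gamma(x)/\rho(x)$ respectively.

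\textbf{Conclusion by the chain rule.} For the gradient,
\begin{equation*}
	\nabla\tGamma = DF(\Psi)\, D\Psi, \qquad |\nabla\tGamma| \lesssim \sigma_\Gamma,
\end{equation*}
which is \eqref{eq:tGamma_1}. For the Hessian,
\begin{equation*}
	D^2\tGamma = D\Psi^\top D^2F(\Psi)\, D\Psi + \sum_k \partial_k F(\Psi)\, D^2\Psi_k,
\end{equation*}
and both terms are bounded by $\sigma_\Gamma/\rho$ off $\{x_{n-1}=0\}$, which is \eqref{eq:tGamma_2}.
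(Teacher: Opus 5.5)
Your proposal is correct and follows the paper's proof essentially step by step. Both use the same factorization $\tGamma = F\circ(x'', x_{n-1}+\tgamma(x), \rho(x))$ with $F(x',t)=\eta'_t*\Gamma(x')$, the same derivative bounds for $F$ obtained as in \cref{lem:tgamma}, the same use of \eqref{eq:tgamma_0} with $L$ small to place the mollification ball inside $B'_{2\rho(x)}(x'', x_{n-1}+\gamma(x''))$, and then the chain rule, which the paper writes out entry by entry rather than in your compact matrix form.
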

	\begin{proof}
		As in the proof of \cref{lem:tgamma}, we use the factorization $\tGamma = F \circ (x \mapsto (x'', x_{n-1} + \tgamma(x), \rho(x_{n-1}, x_n))$, where $F(x', \rho) \coloneqq \eta'_\rho * \Gamma (x')$ is defined for $x' \in \R^{n-1}$ and $\rho > 0$. Taking into account \cref{lem:mollifiers_basic}, the exact same computations as in the proof of \cref{lem:tgamma} show
		\begin{gather*}
			\abs{\partial_{x_j} F(x', \rho)} \leq \norm{\nabla\Gamma}_{L^\infty(B'_\rho(x'))} \quad \text{ for } j \in \{1, \ldots, n-1\},
			\qquad\quad
			\abs{\partial_\rho F(x', \rho)} \lesssim \norm{\nabla\Gamma}_{L^\infty(B'_\rho(x'))}, \\
			\abs{\partial_{x_j x_k} F(x', \rho)}, \abs{\partial_{x_j \rho} F(x', \rho)}, \abs{\partial_{\rho \rho} F(x', \rho)} \lesssim \frac{\norm{\nabla\Gamma}_{L^\infty(B'_\rho(x'))}}{\rho} \quad \text{ for all } j, k \in \{ 1, \ldots, n-1\}.
		\end{gather*}
		Therefore, by the chain rule and the fact that $\sigma_\gamma(x) \leq L \leq 1$, we obtain the estimates for $\tGamma$, recalling the similar estimates for $\tgamma$ obtained in \cref{lem:tgamma}. The computations are definitely routine and tedious, but we include them for completeness of the argument.
		\begin{itemize}
			\item First, if $j \in \{1, \ldots, n-2\}$, we have
			\begin{align*}
				\abs{\partial_{x_j} \tGamma(x)}
				&=
				\abs{\partial_{x_j} F(x'',x_{n-1} + \tgamma(x), \rho(x))
					+ \partial_{x_{n-1}} F(x'',x_{n-1} + \tgamma(x), \rho(x)) \, \partial_{x_j}\tgamma(x)}
				\\ &\lesssim
				\sigma_\Gamma(x) + \sigma_\Gamma(x) \sigma_\gamma(x)
				\lesssim
				\sigma_\Gamma(x).
			\end{align*}
			Here and in the next estimates, we use the following simple observation:
			\begin{align*}
				\norm{\nabla \Gamma}_{L^\infty(B'_{\rho(x)}(x'', x_{n-1} + \tgamma(x)))}
				& \leq 
				\norm{\nabla \Gamma}_{L^\infty(B'_{\rho(x) + \abs{\tgamma(x) - \gamma(x'')}}(x'', x_{n-1} + \gamma(x'')))}
				\\ & \leq 
				\norm{\nabla \Gamma}_{L^\infty(B'_{2\rho(x)}(x'', x_{n-1} + \gamma(x'')))}
				=
				\sigma_\Gamma(x),
			\end{align*}
			where in the last bound we have used \eqref{eq:tgamma_0} choosing $L$ small enough.
			
			\item Next, if $j = n-1$,
			\begin{align*}
				\abs{\partial_{x_{n-1}} \tGamma(x)}
				&=
				\left|\partial_{x_{n-1}} F(x'',x_{n-1} + \tgamma(x), \rho(x)) \, (1 + \partial_{x_{n-1}}\tgamma(x)) \right.
				\\ &\quad+
				\left.\partial_\rho F(x'', x_{n-1} + \tgamma(x), \rho(x)) \, \partial_{x_{n-1}} \rho(x)\right|
				\\ &\lesssim
				\sigma_\Gamma(x) (1 + \sigma_\gamma(x)) + \sigma_\Gamma(x)
				\lesssim
				\sigma_\Gamma(x).
			\end{align*}
			
			\item And the last first order derivative:
			\begin{align*}
				\abs{\partial_{x_n} \tGamma(x)}
				&=
				\abs{
					\partial_{x_{n-1}} F(x'',x_{n-1} + \tgamma(x), \rho(x)) \, \partial_{x_n}\tgamma(x)
					+ \partial_\rho F(x'', x_{n-1} + \tgamma(x), \rho(x)) \, \partial_{x_n} \rho(x)}
				\\ &\lesssim
				\sigma_\Gamma(x) \sigma_\gamma(x) + \sigma_\Gamma(x)
				\lesssim
				\sigma_\Gamma(x).
			\end{align*}
		\end{itemize}
		All these estimates hold for every $x \in \R^n \setminus \H$ because \eqref{eq:Drho} holds pointwise.
		
		Let us now estimate the second derivatives. To ease the notation we leave implicit the points where the functions are evaluated.
		\begin{itemize}
			\item First, if $j, k \in \{1, \ldots, n-2\}$,
			\begin{align*}
				\abs{\partial_{x_j x_k} \tGamma(x)}
				&=
				\left|
				\partial_{x_j x_k} F
				+ \partial_{x_k x_{n-1}} F \, \partial_{x_j} \tgamma
				+ \left(\partial_{x_{n-1} x_j} F + \partial_{x_{n-1} x_{n-1}} F \partial_{x_j} \tgamma\right) \partial_{x_k} \tgamma
				\right.
				\\ & \quad \left.
				+ \, \partial_{x_{n-1}} F \, \partial_{x_j x_k} \tgamma
				\right|
				\\ &\lesssim
				\frac{\sigma_\Gamma(x)}{\rho(x)}
				+ \frac{\sigma_\Gamma(x)}{\rho(x)} \sigma_\gamma(x)
				+ \left( \frac{\sigma_\Gamma(x)}{\rho(x)} + \frac{\sigma_\Gamma(x)}{\rho(x)} \sigma_\gamma(x) \right) \sigma_\gamma(x)
				+ \sigma_\Gamma(x) \frac{\sigma_\gamma(x)}{\rho(x)}
				\lesssim
				\frac{\sigma_\Gamma(x)}{\rho(x)}.
			\end{align*}
			
			\item Next, if $j \in \{1, \ldots, n-2\}$,
			\begin{align*}
				\abs{\partial_{x_j x_{n-1}} \tGamma(x)}
				&=
				|
				\partial_{x_j x_{n-1}} F \, (1 + \partial_{x_{n-1}} \tgamma) + \partial_{x_j \rho} F \, \partial_{x_{n-1}} \rho
				\\ &\quad + \left(\partial_{x_{n-1} x_{n-1}} F \, (1+\partial_{x_{n-1}} \tgamma) + \partial_{x_{n-1} \rho} F \, \partial_{x_{n-1}} \rho \right) \partial_{x_j} \tgamma
				+ \partial_{x_{n-1}} F \, \partial_{x_j x_{n-1}} \tgamma
				|
				\\ &\lesssim
				\frac{\sigma_\Gamma}{\rho}  (1 + \sigma_\gamma) + \frac{\sigma_\Gamma}{\rho}
				+ \left(\frac{\sigma_\Gamma}{\rho} (1 + \sigma_\gamma) + \frac{\sigma_\Gamma}{\rho}\right) \sigma_\gamma
				+ \sigma_\Gamma \frac{\sigma_\gamma}{\rho}
				\lesssim
				\frac{\sigma_\Gamma(x)}{\rho(x)}.
			\end{align*}
			
			\item Still, if $j \in \{1, \ldots, n-2\}$,
			\begin{align*}
				\abs{\partial_{x_j x_n} \tGamma(x)}
				&=
				\left|\partial_{x_j x_{n-1}} F \, \partial_{x_n} \tgamma 
				+ \partial_{x_j \rho} F \, \partial_{x_n} \rho \right.
				\\ &\quad +
				\left.\left(\partial_{x_{n-1} x_{n-1}}F \, \partial_{x_n} \tgamma + \partial_{x_{n-1} \rho} F \,\partial_{x_n} \rho\right) \partial_{x_j} \tgamma + \partial_{x_{n-1}} F \, \partial_{x_j x_n} \tgamma\right|
				\\ &\lesssim
				\frac{\sigma_\Gamma}{\rho} \sigma_\gamma + \frac{\sigma_\Gamma}{\rho}
				+ \left( \frac{\sigma_\Gamma}{\rho} \sigma_\gamma + \frac{\sigma_\Gamma}{\rho} \right) \sigma_\gamma
				+ \sigma_\Gamma \frac{\sigma_\gamma}{\rho}
				\lesssim
				\frac{\sigma_\Gamma(x)}{\rho(x)}.
			\end{align*}
			
			\item Next, we have
			\begin{align*}
				\abs{\partial_{x_{n-1} x_{n-1}} \tGamma(x)}
				&=
				\left|
				\left( \partial_{x_{n-1} x_{n-1}} F \, (1+\partial_{x_{n-1}} \tgamma) + \partial_{x_{n-1} \rho} F \, \partial_{x_{n-1}} \rho \right) (1+\partial_{x_{n-1}} \tgamma)\right.
				\\ &\quad +
				\partial_{x_{n-1}} F \,  \partial_{x_{n-1} x_{n-1}} \tgamma
				\\ &\quad +
				\left.\left( \partial_{\rho x_{n-1}} F \, (1+\partial_{x_{n-1}} \tgamma) + \partial_{\rho \rho} F \, \partial_{x_{n-1}} \rho \right) \partial_{x_{n-1}} \rho
				+ \partial_\rho F \, \partial_{x_{n-1} x_{n-1}} \rho
				\right|
				\\ &\lesssim
				\left( \frac{\sigma_\Gamma}{\rho} (1 + \sigma_\gamma) + \frac{\sigma_\Gamma}{\rho} \right) (1 + \sigma_\gamma)
				+ \sigma_\Gamma \frac{\sigma_\gamma}{\rho}
				+ \left( \frac{\sigma_\Gamma}{\rho} (1 + \sigma_\gamma) + \frac{\sigma_\Gamma}{\rho} \right)
				+ \sigma_\Gamma \frac{1}{\rho}
				\\ & \lesssim
				\frac{\sigma_\Gamma(x)}{\rho(x)}.
			\end{align*}
			
			\item Similarly,
			\begin{align*}
				\abs{\partial_{x_{n-1} x_n} \tGamma(x)}
				&=
				\left|
				\left( \partial_{x_{n-1} x_{n-1}} F \, \partial_{x_n} \tgamma + \partial_{x_{n-1} \rho} F \, \partial_{x_n} \rho \right) (1 + \partial_{x_{n-1}} \tgamma)
				+ \partial_{x_{n-1}} F \, \partial_{x_{n-1} x_n} \tgamma\right.
				\\ &\quad +
				\left.\left( \partial_{\rho x_{n-1}} F \, \partial_{x_n} \tgamma + \partial_{\rho \rho} F \, \partial_{x_n} \rho \right) \partial_{x_{n-1}} \rho
				+ \partial_\rho F \, \partial_{x_{n-1} x_n} \rho\right|
				\\ &\lesssim
				\left( \frac{\sigma_\Gamma}{\rho} \sigma_\gamma + \frac{\sigma_\Gamma}{\rho} \right) (1 + \sigma_\gamma)
				+ \sigma_\Gamma \frac{\sigma_\gamma}{\rho}
				+ \left( \frac{\sigma_\Gamma}{\rho} \sigma_\gamma + \frac{\sigma_\Gamma}{\rho} \right)
				+ \sigma_\Gamma \frac{1}{\rho}
				\lesssim
				\frac{\sigma_\Gamma(x)}{\rho(x)}.
			\end{align*}
			
			\item And we finish with
			\begin{align*}
				\abs{\partial_{x_n x_n} \tGamma(x)}
				&=
				\left|\left( \partial_{x_{n-1} x_{n-1}} F \, \partial_{x_n} \tgamma + \partial_{x_{n-1} \rho} F \, \partial_{x_n} \rho \right) \partial_{x_n} \tgamma
				+ \partial_{x_{n-1}} F \, \partial_{x_n x_n} \tgamma\right.
				\\ &\quad +
				\left.\left( \partial_{\rho x_{n-1}} F \, \partial_{x_n} \tgamma + \partial_{\rho \rho} F  \,\partial_{x_n} \rho \right) \partial_{x_n} \rho
				+ \partial_\rho F \, \partial_{x_n x_n} \rho
				\right|
				\\ &\lesssim
				\left( \frac{\sigma_\Gamma}{\rho} \sigma_\gamma + \frac{\sigma_\Gamma}{\rho} \right) \sigma_\gamma
				+ \sigma_\Gamma \frac{\sigma_\gamma}{\rho}
				+ \left( \frac{\sigma_\Gamma}{\rho} \sigma_\gamma + \frac{\sigma_\Gamma}{\rho} \right)
				+ \sigma_\Gamma \frac{1}{\rho}
				\lesssim
				\frac{\sigma_\Gamma(x)}{\rho(x)}.
			\end{align*}
		\end{itemize}
		All these estimates for second derivatives hold only outside $\{x_{n-1}=0\}$ because of \eqref{eq:D2rho}.
	\end{proof}
	
	
	\subsection{Proof of \cref{th:complex_coordinates}} \label{subsec:proof_coordinates}
	
	In this subsection, we prove \cref{th:complex_coordinates}. We proceed in a series of steps. 
	
	\textbf{Step 1: extension of $\tgamma, \tGamma$, definition of $P$.}
	Let us note that, since our mollifying scale $\rho$ vanishes over the model slit $\H$, it is reasonable to extend the definitions of $\tgamma$ and $\tGamma$ to $\H$ (and hence $\R^n$) simply by pointwise values:
	\begin{equation} \label{def:tgamma_tGamma}
		\tgamma(x)
		\coloneqq
		\begin{cases}
			\gamma(x'') \quad &\text{if } x \in \H, \\
			\eta''_{\rho(x)} * \gamma (x'') &\text{if } x \notin \H,
		\end{cases}
		\qquad
		\tGamma(x)
		\coloneqq
		\begin{cases}
			\Gamma(x'', x_{n-1} + \gamma(x'')) \quad &\text{if } x \in \H, \\
			\eta'_{\rho(x)} * \Gamma(x'', x_{n-1} + \tgamma(x)) &\text{if } x \notin \H,
		\end{cases}
	\end{equation}
	Note that these are indeed natural extensions of $\tgamma, \tGamma$ from \cref{lem:tgamma} and \cref{lem:tGamma} to the whole $\R^n$, so we will abuse notation by reusing their names.
	
	\begin{lemma} \label{lem:continuous}
		Let $\gamma : \R^{n-2} \rightarrow \R, \Gamma : \R^{n-1} \rightarrow \R$ be Lipschitz functions with constant $L > 0$. Then, the functions $\tgamma, \tGamma$ defined in \eqref{def:tgamma_tGamma} (with $\rho, \eta'', \eta'$ as in \cref{subsec:notation,sec:mollifiers}) are continuous in $\R^n$.
	\end{lemma}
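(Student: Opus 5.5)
Continuity off $\H$ is immediate: on the open set $\R^n\setminus\H$ the function $\rho$ is continuous and positive, and the convolution $\eta''_t*\gamma(x'')$ is jointly continuous in $(x'',t)$ for $t>0$. Indeed, it is a parametrized integral of a continuous compactly supported kernel against a continuous function, so dominated convergence applies. Composing gives continuity of $\tgamma$ there. For $\tGamma$ the same argument applies to $(x',t)\mapsto \eta'_t*\Gamma(x')$, composed with the continuous map $x\mapsto (x'',x_{n-1}+\tgamma(x),\rho(x))$. (Alternatively, \cref{lem:tgamma,lem:tGamma} already give bounded gradients there, hence local Lipschitz continuity.) So the only issue is continuity at points $x_0\in\H$, where $\rho(x_0)=0$.

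Fix $x_0\in\H$ and a sequence $x\to x_0$. I will prove the quantitative bound
\[
\abs{\tgamma(x)-\gamma(x_0'')} \le \abs{\tgamma(x)-\gamma(x'')}+\abs{\gamma(x'')-\gamma(x_0'')} \le L\rho(x)+L\abs{x''-x_0''}.
\]
For $x\notin\H$, the first term is controlled exactly as in the proof of \eqref{eq:tgamma_0}, using $\sigma_\gamma\le L$. For $x\in\H$ this first term is zero by definition. Since $\rho$ is continuous on $\R^n$ and vanishes at $x_0$, the right-hand side tends to $0$, so $\tgamma$ is continuous at $x_0$, and hence on all of $\R^n$. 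A byproduct is the global bound $\abs{\tgamma(x)-\gamma(x'')}\le L\rho(x)$ on all of $\R^n$.

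For $\tGamma$ I argue the same way. Set $p(x)\coloneqq(x'',x_{n-1}+\tgamma(x))$, which is continuous on $\R^n$ by the previous step. For $x\notin\H$, the mollification identity with $\int\eta'_{\rho}=1$ and $\supp\eta'_\rho\subset B'_\rho$ gives
\[
\abs{\tGamma(x)-\Gamma(p(x))}\le L\rho(x).
\]
For $x\in\H$ we have $\tGamma(x)=\Gamma(p(x))$ by definition, since there $\tgamma(x)=\gamma(x'')$. Then
\[
\abs{\tGamma(x)-\tGamma(x_0)}\le L\rho(x)+L\abs{p(x)-p(x_0)},
\]
and both terms tend to $0$ because $\rho$ and $p$ are continuous.

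The only mildly delicate point is keeping the definitions consistent on $\H$: the extension of $\tGamma$ on $\H$ uses $\gamma(x'')$ rather than a mollified value. This is reconciled by observing that $\tgamma=\gamma$ on $\H$, so the single formula $\tGamma(x)=\Gamma(p(x))+O(L\rho(x))$ holds on all of $\R^n$ and continuity follows.
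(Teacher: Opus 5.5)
Your proposal is correct and follows essentially the paper's argument: at points of $\H$ you use $\int\eta''_{\rho}=1$, $\supp\eta''_\rho\subset B''_\rho$ and the Lipschitz bound to get $\abs{\tgamma(x)-\gamma(x_0'')}\le L(\rho(x)+\abs{x''-x_0''})$, which is exactly the paper's estimate $L(\abs{h''}+\rho(x+h))$ split by a triangle inequality. Your explicit treatment of continuity off $\H$, and of $\tGamma$ through the continuous map $p(x)=(x'',x_{n-1}+\tgamma(x))$, fills in the cases the paper leaves to the reader as ``similar''.
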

	\begin{proof}
		We want to show that $\tgamma(x+h) \to \tgamma(x)$ as $h \to 0$. The most interesting case is when $x \in \H$ and $x+h \notin \H$ (the other cases and the adaptation for $\tGamma$ are similar). In such case, using that the rescaled mollifiers integrate to 1 (see \cref{lem:mollifiers_basic}), we have
		\begin{align*}
			\abs{\tgamma(x+h) - \tgamma(x)}
			& =
			\abs{\eta''_{\rho(x+h)} * \gamma (x'' + h'') - \gamma(x'')}
			\\ & \leq 
			\int_{B''_{\rho(x+h)}} \eta''_{\rho(x+h)}(w'') \abs{\gamma(x'' + h'' - w'') - \gamma(x'')} dw''
			\\ & \leq 
			\int_{B''_{\rho(x+h)}} \eta''_{\rho(x+h)}(w'') L \left( \abs{h''} + \rho(x+h) \right) dw''
			\\ & =
			L \left( \abs{h''} + \rho(x+h) \right)
			\underset{h \to 0}{\longrightarrow} 
			0,
		\end{align*}
		where we have used the fact that $\rho$ is continuous in $\R^n$ and $\rho(x) = 0$ because $x \in \H$.
	\end{proof}
	
	With this, we are ready to define the appropriate parametrization of our slit domain. Indeed, define $P: \R^n \rightarrow \R^n$ by
	\begin{equation} \label{def:P}
		P(x)
		=
		P(x'', x_{n-1}, x_n)
		\coloneqq
		(x'', x_{n-1} + \tgamma(x), x_n + \tGamma(x)).
	\end{equation}
	It is simple to check that $P$ maps the model slit $\H$ to our slit $\S$:
	
	\begin{lemma} \label{lem:parametrization_slit}
		Let $\gamma : \R^{n-2} \rightarrow \R, \Gamma : \R^{n-1} \rightarrow \R$ be Lipschitz functions. Let the function $P$ be defined as in \eqref{def:P} in terms of $\tgamma, \tGamma$ from \eqref{def:tgamma_tGamma}. Then, it holds $P(\H) \subset \S$.
	\end{lemma}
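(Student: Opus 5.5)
The plan is to unwind the definitions directly, since on $\H$ the mollification scale $\rho$ vanishes and the extended functions in \eqref{def:tgamma_tGamma} reduce to pointwise values.

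Fix $x = (x'', x_{n-1}, x_n) \in \H$, so that $x_{n-1} \le 0$ and $x_n = 0$. By \eqref{def:tgamma_tGamma} we have $\tgamma(x) = \gamma(x'')$ and $\tGamma(x) = \Gamma(x'', x_{n-1} + \gamma(x''))$. Substituting these into \eqref{def:P} gives
\begin{equation*}
	P(x) = \big(x'',\, x_{n-1} + \gamma(x''),\, \Gamma(x'', x_{n-1} + \gamma(x''))\big).
\end{equation*}

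Write $z \coloneqq P(x)$, so that $z'' = x''$ and $z_{n-1} = x_{n-1} + \gamma(x'')$. Since $x_{n-1} \le 0$, it follows that $z_{n-1} \le \gamma(x'') = \gamma(z'')$. Moreover,
\begin{equation*}
	z_n = \Gamma(x'', x_{n-1} + \gamma(x'')) = \Gamma(z'', z_{n-1}) = \Gamma(z').
\end{equation*}
These are exactly the two defining conditions of $\S$ in \cref{def:slit_domain}, so $z \in \S$. Since $x \in \H$ was arbitrary, this shows $P(\H) \subset \S$.

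There is no real obstacle here. The only point requiring care is that the pointwise extension of $\tGamma$ is taken at the shifted point $x_{n-1} + \gamma(x'')$ rather than at $x_{n-1}$. This shift is precisely what makes the last coordinate equal $\Gamma$ evaluated at the first $n-1$ coordinates of $P(x)$. No Lipschitz assumption is needed for this inclusion; it is purely set-theoretic. In fact the argument shows $P|_{\H}$ is a bijection onto $\S$, with inverse $z \mapsto (z'', z_{n-1} - \gamma(z''), 0)$, but only the inclusion is required.
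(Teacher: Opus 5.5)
Your proposal is correct and follows essentially the same argument as the paper: on $\H$ the pointwise extensions give $P(x) = (x'', x_{n-1} + \gamma(x''), \Gamma(x'', x_{n-1} + \gamma(x'')))$, and then $x_{n-1} \le 0$, $x_n = 0$ verify the two defining conditions of $\S$. Your side remark that $P|_{\H}$ is a bijection onto $\S$, with inverse $z \mapsto (z'', z_{n-1} - \gamma(z''), 0)$, is also correct. It gives a direct proof of the identity $P(\H) = \S$, which the paper only obtains later, in \cref{lem:rho_dist_comparable}, from the global bijectivity of $P$.
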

	\begin{proof}
		This follows from the definitions. Indeed, if $x \in \H$, then, simply by \eqref{def:P} and \eqref{def:tgamma_tGamma},
		\begin{equation*}
			(y'', y_{n-1}, y_n) \coloneqq P(x) = (x'', x_{n-1} + \gamma(x''), x_n + \Gamma(x'', x_{n-1} + \gamma(x''))).
		\end{equation*}
		Since $x \in \H$, it holds $x_{n-1} \leq 0$, so $y_{n-1} = x_{n-1} + \gamma(x'') \leq \gamma(x'') = \gamma(y'')$. Moreover, also because $x \in \H$, it holds $x_n = 0$, so $y_n = \Gamma(x'', x_{n-1} + \gamma(x'')) = \Gamma(y')$, confirming $P(x) \in \S$.
	\end{proof}
	
	\textbf{Step 2: invertibility of $P$.}
	Note that $P$ can be written as
	\begin{equation*}
		P(x) = x + (0, \tgamma(x), \tGamma(x)), \qquad x \in \R^n,
	\end{equation*}
	so that if we denote by $I$ the $n \times n$ identity matrix, it holds
	\begin{equation} \label{eq:DP_I-E}
		DP(x) = I +
		\begin{bmatrix}
			0 \\
			\nabla \tgamma(x)\\
			\nabla \tGamma(x)
		\end{bmatrix}
		\eqqcolon
		I - E(x).
	\end{equation}
	The estimates from \cref{lem:tgamma,lem:tGamma} imply that, for every $x \notin \H$, it holds
	\begin{equation} \label{eq:E_size}
		|E(x)|
		\leq
		C (\sigma_\gamma(x) + \sigma_\Gamma(x))
		\leq
		CL.
	\end{equation}
	
	\begin{lemma} \label{lem:P_bijective}
		Let $\gamma : \R^{n-2} \rightarrow \R, \Gamma : \R^{n-1} \rightarrow \R$ be Lipschitz functions with constant $L > 0$. Let the function $P$ be defined as in \eqref{def:P} in terms of $\tgamma, \tGamma$ from \eqref{def:tgamma_tGamma}.
		If $L$ is small enough, then $P: \R^n \setminus \H \rightarrow \R^n \setminus \S$ is bijective.
	\end{lemma}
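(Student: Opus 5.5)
We need to prove that $P:\R^n\setminus\H\to\R^n\setminus\S$ is a bijection. Since $P$ fixes $x''$, we reduce the question to the two-dimensional slices $x''=\text{const}$. The plan combines three ingredients: (i) a global perturbation-of-identity argument showing $P:\R^n\to\R^n$ is a bijection, (ii) \cref{lem:parametrization_slit}, giving $P(\H)\subset\S$, and (iii) a check that $P(\H)=\S$ exactly. Once (i)–(iii) are in place, injectivity of $P$ on $\R^n$ together with $P(\H)=\S$ forces $P(\R^n\setminus\H)=\R^n\setminus\S$.

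\textbf{Global bijectivity of $P$ on $\R^n$.} We show that $Q(x)\coloneqq P(x)-x=(0,\tgamma(x),\tGamma(x))$ is globally Lipschitz with constant $CL<1/2$.
\begin{itemize}
\item On each connected component of $\R^n\setminus(\H\cup\{x_{n-1}=0\})$, the bound $|\nabla Q|\le |E|\le CL$ follows from \eqref{eq:E_size}.
\item $Q$ is continuous on $\R^n$ by \cref{lem:continuous}, and $\tgamma,\tGamma$ are $C^1$ across $\{x_{n-1}=0\}\setminus\H$ by \cref{lem:estimates_rho}.
\item $\H$ has empty interior, so it does not obstruct this argument: for $a,b\in\R^n$, we perturb the segment $[a,b]$ slightly so that it meets $\H$ in at most finitely many points (generic segments cross the hyperplane $\{x_n=0\}$ once). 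We integrate $\nabla Q$ along the pieces of the segment off $\H$, use continuity at the crossing points, and then pass to the limit.
\end{itemize}
This yields $|Q(a)-Q(b)|\le CL|a-b|$.

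With this bound, $|P(a)-P(b)|\ge(1-CL)|a-b|$, so $P$ is injective. Surjectivity comes from the Banach fixed point theorem: for a given $y$, the map $x\mapsto y-Q(x)$ is a contraction on $\R^n$, and its fixed point $x$ satisfies $P(x)=y$. So $P$ is a bi-Lipschitz homeomorphism of $\R^n$.

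\textbf{$P(\H)\supset\S$.} Take $y\in\S$, so $y_{n-1}\le\gamma(y'')$ and $y_n=\Gamma(y')$. Set $x\coloneqq(y'',\,y_{n-1}-\gamma(y''),\,0)$. Then $x_{n-1}\le0$ and $x_n=0$, so $x\in\H$. By \eqref{def:tgamma_tGamma} we have $\tgamma(x)=\gamma(y'')$, hence $P(x)_{n-1}=y_{n-1}$. Likewise $\tGamma(x)=\Gamma(y'',x_{n-1}+\gamma(y''))=\Gamma(y')=y_n$, hence $P(x)=y$. Combined with \cref{lem:parametrization_slit}, this gives $P(\H)=\S$, and therefore, by injectivity, $P(\R^n\setminus\H)=\R^n\setminus\S$ bijectively.

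\textbf{Main obstacle.} The delicate step is the global Lipschitz bound on $Q$ across $\H$. Near $\H$ the gradient bound \eqref{eq:E_size} is only available off $\H$, and a segment could a priori run inside $\H$. The perturbation argument above deals with this. If that argument proves awkward, a fallback is a direct estimate: compare $\tgamma(a)$ and $\tgamma(b)$ through the explicit integral representation, giving $|\tgamma(a)-\tgamma(b)|\le L(|a''-b''|+|\rho(a)-\rho(b)|)\cdot C$. This uses $|\nabla\rho|=1$ and the estimate for $\partial_\rho f$ from \cref{lem:tgamma}, which holds for all $\rho\ge0$ once $\tgamma$ is extended by $f(x'',0)=\gamma(x'')$. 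The same device applies to $\tGamma$.
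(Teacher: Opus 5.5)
Your argument is correct, but it takes a different route from the paper's.

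\textbf{What the paper does.} It proves the coercivity bound $|P(x)-P(y)|\ge\tfrac12|x-y|$ only for $x,y\in\R^n\setminus\H$. Near $\H$ it argues in two cases. When $x_n,y_n\ge0$, the segment $[x,y]$ avoids $\H$. When $x_n,y_n$ have opposite signs, it uses the reflection symmetry $\rho(x_{n-1},x_n)=\rho(x_{n-1},-x_n)$. It then shows that $P(\R^n\setminus\H)$ is relatively closed in $\R^n\setminus\S$, via Cauchy sequences, \cref{lem:continuous} and $P(\H)\subset\S$. It also shows this image is relatively open, by the inverse function theorem since $|E|<1$. Surjectivity follows from the connectedness of $\R^n\setminus\S$, and injectivity from the coercivity bound.

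\textbf{What you do.} You extend the Lipschitz bound on $Q=P-\mathrm{id}$ to all of $\R^n$ by continuity across $\H$. The contraction mapping principle then makes $P$ a global bi-Lipschitz bijection of $\R^n$. You identify $P(\H)=\S$ exactly through the explicit preimage $(y'',y_{n-1}-\gamma(y''),0)$, and injectivity finishes the argument. A small remark: you need not remove $\{x_{n-1}=0\}$, since \eqref{eq:E_size} already holds on all of $\R^n\setminus\H$.

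\textbf{What each route buys.} Yours is more elementary: it uses neither the inverse function theorem nor the connectedness of the slit complement. It also directly gives that $P$ is a bi-Lipschitz homeomorphism of $\R^n$ with $P(\H)=\S$. That is exactly what the paper re-derives afterwards in \cref{lem:rho_dist_comparable}, where it relies on the density of $\R^n\setminus\S$ and on \cref{lem:P_bijective} itself. The paper's open--closed--connected scheme never needs an explicit preimage of points of $\S$, so it would adapt more easily if the slit were parametrized less explicitly.

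Your fallback also works and is perhaps the cleanest way to get the global Lipschitz bound. It rests on three facts: $\rho$ is the distance to $\H$, hence $1$-Lipschitz on all of $\R^n$; $f(\cdot,\rho)$ extends continuously to $\rho=0$ with $|\partial_\rho f|\lesssim L$; and on $\H$ one has $\tGamma=F(x'',x_{n-1}+\tgamma(x),0)$.
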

	\begin{proof}
		Fix $x, y \in \R^n \setminus \H$. By definition of $P$, it holds
		\begin{equation*}
			\abs{P(x) - P(y)}
			\geq
			\abs{x-y} - \abs{\tgamma(x)-\tgamma(y)} - |\tGamma(x)-\tGamma(y)|.
		\end{equation*}
		Let us estimate the size of the last two terms. Assume $x_n \geq 0$ (the negative case is analogous). Then, if $y_n \geq 0$, we may use the mean value theorem (because the segment joining $x$ and $y$ lies in $\R^n \setminus \H$) along with \eqref{eq:E_size} to obtain
		\begin{equation} \label{eq:tgamma_lip}
			\abs{\tgamma(x)-\tgamma(y)}
			\leq
			CL \abs{x-y}.
		\end{equation}
		On the other hand, if $y_n < 0$, we notice that $\tgamma(y) = \tgamma(y'', y_{n-1}, y_n) = \tgamma(y'', y_{n-1}, -y_n)$ because $\rho(y_{n-1}, y_n) = \rho(y_{n-1}, -y_n)$, which means that we can use the above estimate to arrive at
		\begin{equation*}
			\abs{\tgamma(x)-\tgamma(y)}
			=
			\abs{\tgamma(x)-\tgamma(y'', y_{n-1}, -y_n)}
			\leq
			CL \abs{x-(y'', y_{n-1}, -y_n)}
			\leq
			CL \abs{x-y},
		\end{equation*}
		where the last inequality is a simple consequence of the fact that $x_n \geq 0$ and $y_n < 0$. This means that \eqref{eq:tgamma_lip} actually holds for any $x, y \in \R^n \setminus \H$. The exact same argument yields $|\tGamma(x)-\tGamma(y)| \leq CL \abs{x-y}$ for any $x, y \in \R^n \setminus \H$. Inserting these estimates back into the initial bound for $P$, and choosing $L$ small enough, we obtain
		\begin{equation} \label{eq:P_coercive}
			\abs{P(x)-P(y)} \geq \frac{\abs{x-y}}{2}.
		\end{equation}
		
		With this estimate in hand, we show that $P(\R^n \setminus \H)$ is relatively closed in $\R^n \setminus \S$. Indeed, pick $y \in \R^n \setminus \S$ and $x_j \in \R^n \setminus \H$ such that $P(x_j) \to y$. Then $P(x_j)$ is Cauchy, so by \eqref{eq:P_coercive} $x_j$ is Cauchy, too. Hence, there exists $x_0 \in \R^n$ such that $x_j \to x_0$, and by continuity of $P$ (from \cref{lem:continuous}), $y = P(x_0)$. And in fact, since $y \in \R^n \setminus \S$, \cref{lem:parametrization_slit} implies that $x_0 \in \R^n \setminus \H$, whence $y \in P(\R^n \setminus \H)$, which ends the proof of the relative closedness.
		
		Moreover, note that $P(\R^n \setminus \H)$ is relatively open in $\R^n \setminus \S$. Indeed, if $L$ is chosen small enough, the estimate \eqref{eq:E_size} implies that $DP(x)$ is invertible at any $x \in \R^n \setminus \H$ (by the Inverse Function Theorem, or a Neumann series), so actually $P$ is a local diffeomorphism around such $x$, which clearly proves the relative openness.
		
		Since $\R^n \setminus \S$ is connected (by definition of our slit domains), the previous paragraphs imply that $P(\R^n \setminus \H) = \R^n \setminus \S$, and the injectivity follows from \eqref{eq:P_coercive}.
	\end{proof}
	
	
	\begin{lemma} \label{lem:derivatives_P-1}
		Let $\gamma : \R^{n-2} \rightarrow \R, \Gamma : \R^{n-1} \rightarrow \R$ be Lipschitz functions with constant $L > 0$. Let the function $P$ be defined as in \eqref{def:P} in terms of $\tgamma, \tGamma$ from \eqref{def:tgamma_tGamma}.
		If $L$ is small enough, then $P: \R^n \setminus \H \longrightarrow \R^n \setminus \S$ is invertible, and moreover the following estimates hold: 
		\begin{alignat*}{2}
			\abs{DP^{-1}(y)} &\lesssim 1,
			\qquad && \forall \, y \in \R^n \setminus \S,
			\\ 
			\abs{D^2P^{-1}(y)} & \lesssim \frac{\sigma(P^{-1}(y))}{\rho(P^{-1}(y))},
			\qquad
			&& a.e. \; y \in \R^n \setminus \S,
		\end{alignat*}
		where we recall that $\sigma(x) = \sigma_\gamma(x) + \sigma_\Gamma(x)$ (see \eqref{def:sigma}, \eqref{def:sigma_gamma}, and \eqref{def:sigma_Gamma}). In particular, it holds $P^{-1} \in C^{1, 1}_{\mathrm{loc}}(\R^n \setminus \S)$. 
	\end{lemma}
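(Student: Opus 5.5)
Invertibility of $P:\R^n\setminus\H\to\R^n\setminus\S$ is already given by \cref{lem:P_bijective}, so the plan is to differentiate $P^{-1}$ through the inverse function theorem. By \cref{lem:tgamma,lem:tGamma}, $\tgamma,\tGamma\in C^1(\R^n\setminus\H)$, because $\rho\in C^1$ there by \cref{lem:estimates_rho} and the mollified expressions depend smoothly on $(x'',x_{n-1},\rho)$. Hence $P$ is $C^1$ on $\R^n\setminus\H$ with $DP=I-E$ as in \eqref{eq:DP_I-E}, and $|E|\le CL$ by \eqref{eq:E_size}. For $L$ small, $|E|\le 1/2$, so a Neumann series gives $|(I-E)^{-1}|\le 2$. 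By the inverse function theorem, $P^{-1}$ is $C^1$ with $DP^{-1}(y)=(DP(P^{-1}(y)))^{-1}$, which yields the first estimate $|DP^{-1}|\lesssim 1$ everywhere in $\R^n\setminus\S$.

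For the second derivatives I would work first on the open set $U:=\R^n\setminus(\H\cup\{x_{n-1}=0\})$. There $\rho$ is smooth, so $\tgamma,\tGamma$, and hence $P$, are $C^2$ (indeed smooth, since the mollifiers are smooth and $\rho>0$). On $P(U)$ the inverse is then $C^2$. Differentiating the identity $DP^{-1}(y)\,DP(P^{-1}(y))=I$ gives the standard formula (the computation deferred to \cref{sec:sec_ord_IFT}):
\begin{equation*}
\partial_{k}\big(DP^{-1}\big)(y) = -\,DP^{-1}(y)\,\Big[\textstyle\sum_{m}\partial_m DP\big(P^{-1}(y)\big)\,\partial_k (P^{-1})_m(y)\Big]\,DP^{-1}(y).
\end{equation*}
Here $\partial_m DP$ only involves $D^2\tgamma$ and $D^2\tGamma$, which are bounded by $C(\sigma_\gamma+\sigma_\Gamma)/\rho=C\sigma/\rho$ through \eqref{eq:tgamma_2} and \eqref{eq:tGamma_2}. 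Combining this with $|DP^{-1}|\le 2$ gives $|D^2P^{-1}(y)|\lesssim \sigma(P^{-1}(y))/\rho(P^{-1}(y))$ for every $y\in P(U)$.

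It remains to pass to an a.e.\ statement on all of $\R^n\setminus\S$ and to show that $P^{-1}\in C^{1,1}_{\mathrm{loc}}$. The exceptional set $P(\{x_{n-1}=0\}\setminus\H)$ is the image of a hyperplane piece under a Lipschitz map, so it has measure zero, and the Hessian bound holds a.e. The main obstacle is the $C^{1,1}$ claim across this hypersurface, where $D^2\rho$ jumps. I would handle it by showing that $\nabla\tgamma$ and $\nabla\tGamma$ are locally Lipschitz in $\R^n\setminus\H$. The function $\rho$ is $C^{1,1}_{\mathrm{loc}}$ away from $\H$, since it is $C^1$ and its second derivatives are bounded on each side by $1/\rho$. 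The functions $f(x'',\rho)$ and $F(x',\rho)$ are smooth for $\rho>0$. Compositions of these are therefore $C^{1,1}_{\mathrm{loc}}$, and so $DP$ is locally Lipschitz. Then $DP^{-1}=(DP)^{-1}\circ P^{-1}$ is a composition of locally Lipschitz maps, since matrix inversion is smooth near matrices with $|E|\le1/2$. So $DP^{-1}$ is locally Lipschitz, and its a.e.\ derivative agrees with the formula above off the null set.
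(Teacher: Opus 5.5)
Your proposal is correct and follows essentially the same route as the paper. You bound $DP^{-1}$ by a Neumann series through the inverse function theorem, then combine the second-order inverse-function identity of \cref{sec:sec_ord_IFT} with \eqref{eq:tgamma_2}, \eqref{eq:tGamma_2} off the null set $P(\{x_{n-1}=0\}\setminus\H)$. The differences are minor refinements. You bound $(I-E)^{-1}$ by submultiplicativity of the operator norm instead of the paper's entrywise induction on $E^k$. You also justify the $C^{1,1}_{\mathrm{loc}}$ claim across the hypersurface where $D^2\rho$ jumps, by showing $DP$ is locally Lipschitz because $\rho\in C^{1,1}_{\mathrm{loc}}(\R^n\setminus\H)$; the paper passes over this point by combining the a.e.\ Hessian bound with the gradient bound.
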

	\begin{proof}
		\textbf{Invertibility and first derivatives.}
		Choosing $L$ small enough, \eqref{eq:E_size} means that $|E(x)| < 1$ for all $x \in \R^n \setminus \H$, which implies that $DP(x)$ is invertible (by the Inverse Function Theorem, or a Neumann series, see \eqref{eq:DP_I-E}). In fact, having \cref{lem:P_bijective} and the Inverse Function Theorem in mind, it turns out that $P$ is actually globally invertible, i.e. there exists a function $P^{-1}: \R^n \setminus \S \rightarrow \R^n \setminus \H$ which is the pointwise inverse of $P$, and moreover satisfies
		\begin{equation} \label{eq:P-1}
			DP^{-1}(y)
			=
			(DP(P^{-1}(y)))^{-1},
			\qquad
			\forall \, y \in P(\R^n \setminus \H).
		\end{equation}
		
		To get a quantitative estimate on the size of $DP^{-1}$, let us use the Neumann series. Indeed, it yields, for $x \in \R^n \setminus \H$,
		\begin{equation} \label{eq:geometric_series}
			DP(x)^{-1} = \left( I - E(x) \right)^{-1} = \sum_{k=0}^\infty E(x)^k.
		\end{equation}
		It is easy to check by induction that for every $k \geq 1$, the matrix $E(x)^k$ has only non-zero elements in the last two rows, and that each of these elements is bounded (in absolute value) by $2^{k-1}(CL)^k$. Indeed, for $k=1$, it follows from \eqref{eq:DP_I-E} and \eqref{eq:E_size}. Inductively, if we denote by $v^{(k)},w^{(k)}$ the last two rows of $E(x)^k$, then
		\begin{align*}
			E(x)^{k+1}
			& =
			E(x)^k \cdot E(x)
			=
			\begin{bmatrix}
				0 & 0 & \ldots & 0 \\
				\vdots & \vdots & \ddots & \vdots \\
				0 & 0 & \ldots & 0 \\
				v^{(k)}_1 & v^{(k)}_2 & \ldots & v^{(k)}_n \\
				w^{(k)}_1 & w^{(k)}_2 & \ldots & w^{(k)}_n
			\end{bmatrix}
			\begin{bmatrix}
				0 & 0 & \ldots & 0 \\
				\vdots & \vdots & \ddots & \vdots \\
				0 & 0 & \ldots & 0 \\
				v^{(1)}_1 & v^{(1)}_2 & \ldots & v^{(1)}_n \\
				w^{(1)}_1 & w^{(1)}_2 & \ldots & w^{(1)}_n
			\end{bmatrix}
			\\ & =
			\begin{bmatrix}
				0 & 0 & \ldots & 0 \\
				\vdots & \vdots & \ddots & \vdots \\
				0 & 0 & \ldots & 0 \\
				v^{(k)}_{n-1} v^{(1)}_1 + v^{(k)}_n w^{(1)}_1 
				& v^{(k)}_{n-1} v^{(1)}_2 + v^{(k)}_n w^{(1)}_2
				& \ldots 
				& v^{(k)}_{n-1} v^{(1)}_n + v^{(k)}_n w^{(1)}_n \\
				w^{(k)}_{n-1} v^{(1)}_1 + w^{(k)}_n w^{(1)}_1 
				& w^{(k)}_{n-1} v^{(1)}_2 + w^{(k)}_n w^{(1)}_2
				& \ldots 
				& w^{(k)}_{n-1} v^{(1)}_n + w^{(k)}_n w^{(1)}_n
			\end{bmatrix},
		\end{align*}
		so that if $|v^{(k)}|, |w^{(k)}| \leq 2^{k-1}(CL)^k$ (by induction hypothesis) and $|v^{(1)}|, |w^{(1)}| \leq CL$, clearly the elements of $E(x)^{k+1}$ have size bounded by $2^k(CL)^{k+1}$. Thus, back in \eqref{eq:geometric_series}, we obtain
		\begin{equation*}
			\abs{DP(x)^{-1}}
			\leq
			\sum_{k=0}^\infty |E(x)^k|
			\leq
			\sum_{k=0}^\infty (2CL)^k
			=
			\frac{1}{1-2CL}
			\leq
			2
		\end{equation*}
		as soon as $L$ is small enough. This confirms the first estimate claimed in the statement, taking into account also \cref{lem:P_bijective} and \eqref{eq:P-1}.
		
		\textbf{Second derivatives.}
		In fact, since we had estimates in \cref{lem:tgamma,lem:tGamma} for the second derivatives of $\tgamma$ and $\tGamma$ in $\R^n \setminus (\H \cup \{x_{n-1} = 0\})$, we can differentiate twice the equations $P \circ P^{-1} = \text{id} = P^{-1} \circ P$ to arrive at the following identity (we carry out the elementary computations in \cref{sec:sec_ord_IFT}), for any $\ell \in \{1, \ldots, n\}$ and a.e. $y \in P(\R^n \setminus \H)$,
		\begin{equation} \label{eq:D^2_P-1}
			\partial_{y_j y_k} (P^{-1})_\ell (y)
			=
			- \sum_{a, b, c = 1}^n \partial_{y_a} (P^{-1})_\ell(y) \, \partial_{x_b x_c} P_a(P^{-1}(y)) \, \partial_{y_j} (P^{-1})_b(y) \, \partial_{y_k} (P^{-1})_c(y),
		\end{equation}
		where sub-indices on functions indicate their components as $\R^n \to \R^n$ functions.
		
		At this point, we note that, if $a, b, c \in \{1, \ldots, n\}$ and $x \in \R^n \setminus \H$, it holds (see \eqref{def:P})
		\begin{equation*}
			\partial_{x_b x_c} P_a(x)
			=
			\partial_{x_b x_c} \tgamma_a(x) + \partial_{x_b x_c} \tGamma_a(x).
		\end{equation*}
		Thus, for any $y \in P(\R^n \setminus (\H \cup \{x_{n-1} = 0\}))$ (recalling \cref{lem:P_bijective}, this set has full measure inside $\R^n \setminus \S$ because $P$ is Lipschitz in $\R^n \setminus \H$ and $\{x_{n-1}=0\}$ is a null set), \cref{lem:tgamma,lem:tGamma} yield the following bound
		\begin{equation*}
			\abs{\partial_{x_b x_c} P_a(P^{-1}(y))}
			\leq 
			C \left( \frac{\sigma_\gamma(P^{-1}(y))}{\rho(P^{-1}(y))} + \frac{\sigma_\Gamma(P^{-1}(y))}{\rho(P^{-1}(y))} \right)
			=
			C \frac{\sigma(P^{-1}(y))}{\rho(P^{-1}(y))}.
		\end{equation*}
		This, combined with the estimate $\abs{DP^{-1}} \lesssim 1$ in $\R^n \setminus \S$ (obtained in the first half of this lemma) yields, back in \eqref{eq:D^2_P-1}, the bound claimed in the statement for second derivatives of $P^{-1}$. Finally, on any compact subset of $\R^n \setminus \S$ the quantity $\rho(P^{-1}(\cdot))$ is bounded below, so the two estimates give $P^{-1} \in C^{1, 1}_{\mathrm{loc}}(\R^n \setminus \S)$.
	\end{proof}
	
	\begin{lemma} \label{lem:rho_dist_comparable}
		Under the hypotheses of \cref{lem:P_bijective}, if $L$ is small enough then 
		\begin{equation*}
			\tfrac{1}{2}\, \rho(P^{-1}(y)) \ \leq \ \dist(y, \S) \ \leq \ 2\, \rho(P^{-1}(y)),
			\qquad \forall \, y \in \R^n \setminus \S .
		\end{equation*}
	\end{lemma}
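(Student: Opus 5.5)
Write $x \coloneqq P^{-1}(y) \in \R^n \setminus \H$ (well defined by \cref{lem:P_bijective}), so $y = P(x)$. Both inequalities follow by comparing $y$ with suitable points of $\S$ and of $P(\H)$. The tools are the bi-Lipschitz bound \eqref{eq:P_coercive}, the Lipschitz bound $|P(x)-P(z)| \le (1+CL)|x-z|$, and the fact that $P$ maps $\H$ into $\S$. The first two are valid for all $x,z \in \R^n$ because $\tgamma,\tGamma$ are continuous on $\R^n$ (\cref{lem:continuous}), so \eqref{eq:tgamma_lip} extends by density to $\H$.

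\textbf{Upper bound.} Since $\rho(x) = \dist(x,\H)$, pick $\hat x \in \H$ with $|x - \hat x| = \rho(x)$. By \cref{lem:parametrization_slit}, $P(\hat x) \in \S$. Hence
\begin{equation*}
\dist(y,\S) \le |P(x) - P(\hat x)| \le (1+CL)\,\rho(x) \le 2\rho(x)
\end{equation*}
for $L$ small. I must make sure the Lipschitz estimate for $P$ really holds up to $\H$. One route is the symmetry trick from the proof of \cref{lem:P_bijective}: apply the mean value theorem on segments in $\{x_n>0\}$ and pass to the limit using continuity. Alternatively, there is a direct estimate. Since $\hat x \in \H$ and $\rho(\hat x)=0$, one has $|\tgamma(x) - \gamma(\hat x'')| \le C L \rho(x) + L|x''-\hat x''|$ by \eqref{eq:tgamma_0}, and similarly for $\tGamma$ via the mollifier cancellation.

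\textbf{Lower bound.} Pick $\hat y \in \S$ with $|y - \hat y| = \dist(y,\S)$. The difficulty, and the main point, is that $P$ is only shown to be a bijection $\R^n\setminus\H \to \R^n\setminus\S$. We need a preimage of $\hat y$ in $\H$, i.e.\ $\S \subset P(\H)$. I would prove this directly. Given $\hat y = (\hat y'', \hat y_{n-1}, \Gamma(\hat y'))$ with $\hat y_{n-1} \le \gamma(\hat y'')$, set $\hat x \coloneqq (\hat y'', \hat y_{n-1} - \gamma(\hat y''), 0) \in \H$. By \eqref{def:tgamma_tGamma}, $P(\hat x) = (\hat y'', \hat y_{n-1}, \Gamma(\hat y'))= \hat y$. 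Hence $P(\H) = \S$. Then \eqref{eq:P_coercive}, extended to all of $\R^n$ by continuity, gives
\begin{equation*}
\dist(y,\S) = |P(x) - P(\hat x)| \ge \tfrac12 |x - \hat x| \ge \tfrac12 \dist(x,\H) = \tfrac12 \rho(x).
\end{equation*}

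The only delicate step is the extension of \eqref{eq:P_coercive} and of the Lipschitz bound to pairs where one point lies on $\H$. This follows by approximating $\hat x \in \H$ by points $\hat x + t e_n \notin \H$ as $t \to 0^+$ and using continuity of $P$ from \cref{lem:continuous}. A subtlety remains for points of $\H$ approached from the other side: the two-sided limits coincide there, since $\rho$ is even in $x_n$ and $\tgamma$, $\tGamma$ depend on $x_n$ only through $\rho$.
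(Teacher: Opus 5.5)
Your proof is correct and has the same structure as the paper's. Both extend the two-sided bounds $\tfrac12|x-z|\le|P(x)-P(z)|\le(1+CL)|x-z|$ from $\R^n\setminus\H$ to all of $\R^n$ by continuity (\cref{lem:continuous}), and then compare $y=P(x)$ with images $P(z)$, $z\in\H$.

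The one step you do differently is the inclusion $\S\subset P(\H)$.
\begin{itemize}
\item You exhibit the explicit preimage $(\hat y'',\hat y_{n-1}-\gamma(\hat y''),0)\in\H$, using the definition \eqref{def:tgamma_tGamma} of $\tgamma,\tGamma$ on $\H$.
\item The paper argues globally. The lower bound makes $P(\R^n)$ closed, and $P(\R^n)$ contains the dense set $\R^n\setminus\S$, so $P$ is a bijection of $\R^n$. Combined with \cref{lem:P_bijective} and \cref{lem:parametrization_slit}, this forces $P(\H)=\S$.
\end{itemize}
Your route is shorter and purely computational. The paper's route also records that $P$ is a bi-Lipschitz homeomorphism of $\R^n$, but your argument yields this as well: injectivity comes from the extended lower bound, and surjectivity from $P(\R^n\setminus\H)=\R^n\setminus\S$ together with $P(\H)=\S$.

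Your final worry about one-sided limits at $\H$ is unnecessary. \cref{lem:continuous} already gives continuity of $\tgamma$ and $\tGamma$, hence of $P$, on all of $\R^n$. Density of $\R^n\setminus\H$ then suffices to extend the bounds.
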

	\begin{proof}
		By \eqref{eq:E_size} and the argument in \cref{lem:P_bijective}, $|\tgamma(x)-\tgamma(z)| + |\tGamma(x)-\tGamma(z)| \leq CL|x-z|$ for $x, z \in \R^n \setminus \H$, hence for all $x,z \in \R^n$ by continuity (see \cref{lem:continuous}). Therefore
		\begin{equation*}
			\tfrac{1}{2}|x-z| \ \leq \ |P(x)-P(z)| \ \leq \ (1+CL)|x-z|,
			\qquad \forall\, x,z \in \R^n,
		\end{equation*}
		the lower bound being \eqref{eq:P_coercive}, again extended by continuity to the whole $\R^n$. In particular $P$ is injective on $\R^n$, and $P(\R^n)$ is closed by the lower bound. Moreover, since $P(\R^n)$ contains $P(\R^n \setminus \H) = \R^n \setminus \S$, which is dense in $\R^n$, it must be $P(\R^n) = \R^n$, whence $P$ is in fact a bijection of $\R^n$, and $P(\H) = \S$ by \cref{lem:parametrization_slit}.
		
		Let now $y \in \R^n \setminus \S$ and $x \coloneqq P^{-1}(y)$. For $z \in \H$ we have $P(z) \in \S$, so $\dist(y,\S) \leq |P(x)-P(z)| \leq (1+CL)|x-z|$, and taking the infimum over $z \in \H$ gives $\dist(y,\S) \leq (1+CL)\rho(x)$. Conversely, any $s \in \S$ is $s = P(z)$ with $z \in \H$, so $|y-s| = |P(x)-P(z)| \geq \frac{1}{2}|x-z| \geq \frac{1}{2}\rho(x)$, and taking the infimum over $s \in \S$ gives $\dist(y,\S) \geq \frac{1}{2}\rho(x)$. Choosing $L$ small enough that $CL \leq 1$ finishes the proof.
	\end{proof}
	
	\textbf{Step 3: definition and properties of $\zeta_1, \zeta_2$.}
	We can finally define $\zeta_1$ and $\zeta_2$ as the last two components of $P^{-1}$, namely
	\begin{equation} \label{def:zeta}
		(y'', \zeta_1(y), \zeta_2(y))
		\coloneqq
		P^{-1}(y)
		=
		P^{-1}(y'', y_{n-1}, y_n),
		\qquad
		y \in \R^n \setminus \S = \Omega,
	\end{equation}
	where we recall that $P$, and hence also $P^{-1}$, leave the first $n-2$ coordinates untouched. It is now immediate from \cref{lem:P_bijective} that \eqref{eq:zeta_maps} holds.
	
	To finish the proof of \cref{th:complex_coordinates}, we are left to show all the estimates for $\zeta_1, \zeta_2$ and their derivatives. First, the estimate for the second derivatives from \cref{lem:derivatives_P-1} directly gives \eqref{eq:zeta_hessian}. In particular, since $\zeta_1$ and $\zeta_2$ are the last two components of $P^{-1}$, we have $\zeta_1, \zeta_2 \in C^{1, 1}_{\mathrm{loc}}(\Omega)$. In turn, for the first derivatives, just refining slightly our computation in \cref{lem:derivatives_P-1}, we may note that \eqref{eq:geometric_series} actually rewrites, using \eqref{eq:P-1}, like
	\begin{equation*}
		DP^{-1}(y) - I 
		=
		DP(P^{-1}(y))^{-1} - I
		=
		\sum_{k=1}^\infty E(P^{-1}(y))^k,
		\qquad y \in \Omega,
	\end{equation*}
	and focusing on the last two rows of this matrix identity, we get that (recalling \eqref{eq:E_size} and the definition of $\sigma$)
	\begin{equation*}
		\abs{\nabla\zeta_1(y) - e_{n-1}}, \abs{\nabla\zeta_2(y) - e_n}
		\leq
		\abs{ \sum_{k=1}^\infty E(P^{-1}(y))^k }
		\leq
		\sum_{k=1}^\infty (2C\sigma\left(P^{-1}(y)\right))^k
		\leq
		C \sigma\left(P^{-1}(y)\right),
	\end{equation*}
	which confirms \eqref{eq:zeta_1_gradient} and \eqref{eq:zeta_2_gradient}. Note that we can sum the geometric series because $\sigma(P^{-1}(y)) \leq L$ is small enough.
	
	To finish, we are left to show \eqref{eq:zeta_1_pointwise} and \eqref{eq:zeta_2_pointwise}. For that purpose, note that
	\begin{align} \label{eq:PP-1_expanded}
		(y'', y_{n-1}, y_n)
		&=
		y = P(P^{-1}(y)) = P(y'', \zeta_1(y), \zeta_2(y))
		\nonumber
		\\ &=
		\left(y'', \zeta_1(y) + \tgamma \left(y'', \zeta_1(y), \zeta_2(y)\right), \zeta_2(y) + \tGamma \left(y'', \zeta_1(y), \zeta_2(y)\right)\right),
	\end{align}
	so exactly as in \eqref{eq:tgamma_0} we obtain
	\begin{align} \label{eq:zeta_1_aux}
		\abs{\zeta_1(y) - (y_{n-1} - \gamma(y''))}
		& =
		\abs{\gamma(y'') - \tgamma (y'', \zeta_1(y), \zeta_2(y))}
		\nonumber
		\\ & \leq
		C \sigma(y'', \zeta_1(y), \zeta_2(y)) \, \rho(\zeta_1(y), \zeta_2(y))
		= 
		C \sigma(P^{-1}(y)) \, \rho(\zeta(y)),
	\end{align}
	which confirms \eqref{eq:zeta_1_pointwise}. 
	
	The computation to show \eqref{eq:zeta_2_pointwise} follows a similar spirit. Indeed, using \eqref{eq:PP-1_expanded}, we obtain
	\begin{align} \label{eq:zeta_2_triangle}
		|\zeta_2(y) \, - & \left(y_n - \Gamma(y')\right) |
		=
		\abs{\Gamma(y') - \tGamma (y'', \zeta_1(y), \zeta_2(y))}
		\nonumber
		\\ & \;\; \leq
		\iint_{B'_{\rho(\zeta(y))}} \!\!\! \eta'_{\rho(\zeta(y))} (w'',w_{n-1})
		\\ & \qquad \qquad  \cdot \abs{ \Gamma(y') - \Gamma\left(y''-w'', \zeta_1(y) + \tgamma (P^{-1}(y)) - w_{n-1}\right)} dw'' dw_{n-1}
		\nonumber
	\end{align} 
	At this point, we note that both points in the inner difference term can be expressed as 
	\begin{equation*}
		(y'',y_{n-1})
		=
		(y'', \zeta_1(y) + \gamma(y''))
		+ (0, y_{n-1} - \zeta_1(y) - \gamma(y'')), 
	\end{equation*}
	where, by \eqref{eq:zeta_1_aux}, we can estimate
	\begin{equation*}
		\abs{(0, y_{n-1} - \zeta_1(y) - \gamma(y''))}
		\leq 
		C \rho(P^{-1}(y)),
	\end{equation*}
	and similarly 
	\begin{align*}
		\left(y''-w'', \zeta_1(y) + \tgamma (P^{-1}(y)) - w_{n-1}\right)
		& =
		(y'', \zeta_1(y) + \gamma(y''))
		\\ & \;\; + (-w'', -w_{n-1})
		+ (\tgamma (P^{-1}(y)) - \gamma(y'')), 
	\end{align*}
	and again using \eqref{eq:zeta_1_aux} and the fact that $(w'', w_{n-1}) \in B'_{\rho(\zeta(y))}$, we have 
	\begin{equation*}
		\abs{(-w'', -w_{n-1})
			+ (\tgamma (P^{-1}(y)) - \gamma(y''))}
		\leq 
		C \rho(P^{-1}(y)).
	\end{equation*}
	Thus, applying the mean value theorem in \eqref{eq:zeta_2_triangle}, we get 
	\begin{align*}
		& \left|\zeta_2(y) - \left(y_n - \Gamma(y')\right)\right|
		\\ & \quad \leq 
		C \norm{\nabla \Gamma}_{L^\infty(B'_{C\rho(P^{-1}(y))}(y'', \zeta_1(y)+\gamma(y'')))} \rho(P^{-1}(y)) \iint_{B'_{\rho(\zeta(y))}} \!\!\! \eta'_{\rho(\zeta(y))}(w'',w_{n-1}) dw'' dw_{n-1}
		\\ & \quad =
		C \, \widetilde{\sigma}(P^{-1}(y)) \, \rho(P^{-1}(y))
	\end{align*}
	where in the last step we have used the definition of $\widetilde{\sigma}$ in \eqref{def:sigma_tilde} and \eqref{def:zeta}. This finishes the proof of \eqref{eq:zeta_2_pointwise}, and hence of \cref{th:complex_coordinates} (and \cref{th:complex_coordinates_intro}). 
	
	
	\section{Barriers in Lipschitz slit domains}
	\label{sec:barriers}
	
	The regularized change of coordinates constructed in the previous section allows us to transfer the model profile $U_0$ (see \cref{sec:motivation}) from the flat slit $\R^n \setminus \H$ to a general slit domain. Since the geometry is no longer flat, the resulting functions are not exactly harmonic. The key observation is that the quantitative estimates for the regularized coordinates obtained in \cref{sec:reg_complex_coord} allow us to control the error and recover barriers after introducing suitable powers determined by the geometry of the slit.
	
	\begin{theorem} \label{th:barriers}
		Let $\Omega$ be a Lipschitz slit domain as in \cref{def:lip_slit_domain} with Lipschitz constant $L$. If $L$ is small enough, by \cref{th:complex_coordinates} we can define
		\begin{equation*}
			h(y)
			\coloneqq
			\Re\left(\sqrt{\zeta_1(y) + i\zeta_2(y)}\right),
			\qquad
			y \in \Omega,
		\end{equation*} 
		where we have used the principal branch of the complex square root. Then, for any $r > 0$, if we choose $\eps$ sufficiently small satisfying
		\begin{equation*}
			\varepsilon
			\geq
			C\left(\norm{\nabla\gamma}_{L^\infty(B''_{10r})} + \norm{\nabla\Gamma}_{L^\infty(B'_{10r})}\right),
		\end{equation*}
		where $C$ is an uniform constant (depending only on $n$), it holds
		\begin{equation*}
			-\Delta(h^{1+\varepsilon})(y) \leq 0,
			\qquad
			-\Delta(h^{1-\varepsilon})(y) \geq 0,
			\qquad
			y \in B_r \cap \Omega.
		\end{equation*}
	\end{theorem}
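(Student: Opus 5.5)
We compute $\Delta(h^{1\pm\eps})$ by the chain rule and show that the sign is dictated by the power, because the error coming from the curvature of the coordinates is small. Write $h = H\circ\zeta$ with $H(z)=\Re\sqrt{z}$, $z = z_1+iz_2$. Then $H$ is harmonic in $\CC\setminus\H_\CC$, positive, and $1/2$-homogeneous, with
\begin{equation*}
H(z)\approx \rho(z)^{1/2}\cos(\theta/2), \qquad |\nabla H|\approx \rho^{-1/2}, \qquad |D^2H|\lesssim \rho^{-3/2}.
\end{equation*}
The key quantitative fact is the lower bound $|\nabla H|^2 = \frac{1}{4|z|}$. Combined with $H\lesssim |z|^{1/2}$, this gives $H^2/|\nabla H|^2 \lesssim \rho^2$, which lets us compare $H/|\nabla H|^2$ against $\rho$. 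Since $\zeta$ only acts on the last two variables, the chain rule gives
\begin{equation*}
\Delta h = \sum_{a,b} \partial_{ab}H(\zeta)\,\nabla\zeta_a\cdot\nabla\zeta_b + \sum_a \partial_a H(\zeta)\,\Delta\zeta_a .
\end{equation*}
Using $\partial_{11}H+\partial_{22}H=0$, the first sum equals $\sum_{a,b}\partial_{ab}H\,(\nabla\zeta_a\cdot\nabla\zeta_b-\delta_{ab})$.

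By \eqref{eq:zeta_1_gradient}--\eqref{eq:zeta_hessian}, the two error terms are bounded by $C\sigma\rho^{-3/2}$ and $C\sigma\rho^{-1}\rho^{-1/2}$, with $\sigma=\sigma(P^{-1}(y))$ and $\rho=\rho(\zeta(y))$. Hence
\begin{equation*}
|\Delta h|\le C\sigma\,\rho(\zeta)^{-3/2}.
\end{equation*}
Similarly, $|\nabla h|^2 = |\nabla H(\zeta)|^2(1+O(\sigma)) \ge \frac{1}{8}\rho(\zeta)^{-1}$ for $L$ small, using $|\zeta|\ge\rho(\zeta)$. Actually $|z|=\rho$ when $z_1>0$; when $z_1\le0$ we have $|z|\ge\rho$, and then $|\nabla H|^2=1/(4|z|)$ might be smaller than $\rho^{-1}$. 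This is the delicate region, addressed below.

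Next, $\Delta(h^{p}) = p h^{p-1}\big(\Delta h + (p-1)h^{-1}|\nabla h|^2\big)$. For $p=1+\eps$ we need
\begin{equation*}
\eps\,|\nabla h|^2/h \ge |\Delta h|,
\end{equation*}
and for $p=1-\eps$ the same inequality with the sign reversed; so one inequality suffices for both. Now
\begin{equation*}
|\nabla h|^2/h \gtrsim \frac{1}{|z|\,H(z)} \gtrsim \frac{1}{|z|^{3/2}} ,
\end{equation*}
since $H\le|z|^{1/2}$. In the region $z_1\le0$ we have $|z|$ comparable to $\rho$ only up to the ratio $|z_1|/|z_2|$. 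There, however, $H(z)=\Re\sqrt z = |z_2|/(2\,\Im\sqrt z)\approx |z_2|/|z|^{1/2}$, so
\begin{equation*}
\frac{|\nabla H|^2}{H}\approx \frac{1}{|z|^{1/2}|z_2|}=\frac{1}{|z|^{1/2}\rho}.
\end{equation*}
This is the main obstacle: the naive error bound $\sigma\rho^{-3/2}$ is not dominated by $\eps|z|^{-1/2}\rho^{-1}$ when $|z|\gg\rho$, i.e. deep along the slit away from the edge. I would resolve it by sharpening the error estimate. Using $|D^2H(z)|\lesssim|z|^{-3/2}$ and $|\nabla H|\lesssim|z|^{-1/2}$ (not $\rho^{-1/2}$), we get
\begin{equation*}
|\Delta h|\le C\sigma\big(|z|^{-3/2}+|z|^{-1/2}\rho^{-1}\big)\le C\sigma|z|^{-1/2}\rho^{-1},
\end{equation*}
which has exactly the form of the good term. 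Thus it suffices that $\eps\ge C\sigma(P^{-1}(y))$ for all $y\in B_r\cap\Omega$.

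Finally, localize $\sigma$. For $y\in B_r$, \cref{lem:rho_dist_comparable} and the Lipschitz bounds on $P^{\pm1}$ give $|P^{-1}(y)|\le 2r+C$ times small quantities, and $\rho(\zeta(y))\le 2\dist(y,\S)\le 2r$. The balls appearing in \eqref{def:sigma}, namely $B''_{\rho}(y'')$ and $B'_{2\rho}$ centered at $(y'',\zeta_1+\gamma(y''))$, are then contained in $B''_{10r}$ and $B'_{10r}$, after using \eqref{eq:zeta_1_pointwise} and $|\gamma(y'')|\le Lr$. Hence
\begin{equation*}
\sigma(P^{-1}(y))\le\norm{\nabla\gamma}_{L^\infty(B''_{10r})}+\norm{\nabla\Gamma}_{L^\infty(B'_{10r})},
\end{equation*}
and the hypothesis on $\eps$ closes the argument. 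Since $\zeta\in C^{1,1}_{\mathrm{loc}}$, the inequalities hold a.e. and hence in the weak sense.
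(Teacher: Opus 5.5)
Your proof is correct and follows essentially the same route as the paper. Both reduce to $h\,|\Delta h|\le\varepsilon|\nabla h|^2$, bound $|\Delta h|\lesssim\sigma\bigl(|\zeta|^{-3/2}+|\zeta|^{-1/2}\rho(\zeta)^{-1}\bigr)$ from the near-orthonormality of $\nabla\zeta_1,\nabla\zeta_2$ (the paper expresses your harmonicity trick as smallness of $\sum_j(\partial_j\zeta_1+i\partial_j\zeta_2)^2$) together with \eqref{eq:zeta_hessian}, and handle the region $\zeta_1<0$ via $\cos(\theta/2)\,|\zeta|/\rho(\zeta)=1/(2|\sin(\theta/2)|)\le\sqrt2/2$, which is exactly your observation $H(z)\approx|z_2|/|z|^{1/2}$ there; localizing $\sigma$ through \cref{lem:rho_dist_comparable} instead of the paper's $P^{-1}(y)\in B_{3r}$ is an equally valid minor variant.

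Delete the exploratory claim $|\nabla h|^2\ge\frac18\rho(\zeta)^{-1}$, which fails when $\zeta_1<0$ because $|\zeta|\ge\rho(\zeta)$ points the wrong way, as you yourself notice. Your final argument only uses $|\nabla h|^2\ge\frac18|\zeta|^{-1}$, which does hold.
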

	\begin{proof}
		Let us abbreviate the notation by calling $\zeta(y) \coloneqq \zeta_1(y) + i \zeta_2(y) \in \CC$. If we express $\zeta$ in polar coordinates as $\zeta(y) = \abs{\zeta(y)} e^{i\theta_{\zeta(y)}}$, then our function $h$ reads
		\begin{equation*}
			h(y)
			=
			\abs{\zeta(y)}^{1/2} \cos\left(\frac{\theta_{\zeta(y)}}{2}\right).
		\end{equation*}
		Note that, since we are using the principal branch of the complex square root, $\theta_{\zeta(y)} \in (-\pi, \pi)$ for $y \in \Omega$ (indeed, this follows from \cref{th:complex_coordinates}, because $y \in \Omega$ implies $(y'', \zeta(y)) \notin \H$), and it is also clear that $h \geq 0$ in $\Omega$.
		
		\begin{enumerate}[wide]
			\namedstep{initial computations}\label{it:barriers_proof_step1} Let us first show that $\Delta (h^{1+\varepsilon}) \geq 0$ in $B_r \cap \Omega$. A simple computation shows that, for $y \in \Omega$,
			\begin{align*}
				\Delta (h^{1+\varepsilon})(y)
				&=
				(1+\varepsilon) \left( \varepsilon \, h^{\varepsilon - 1}(y) \abs{\nabla h(y)}^2 + h^\varepsilon(y)\Delta h(y) \right)
				\\ &=
				(1+\varepsilon) \, h^{\varepsilon - 1}(y) \left(\varepsilon \abs{\nabla h(y)}^2 + h(y)\Delta h(y) \right).
			\end{align*}
			Thus, since we already discussed that $h \geq 0$, we are left to show that
			\begin{equation} \label{eq:claim_barrier}
				h(y) \abs{\Delta h(y)} \leq \varepsilon \abs{\nabla h(y)}^2,
				\qquad
				y \in \Omega \cap B_r.
			\end{equation}
			
			To verify \eqref{eq:claim_barrier}, we need to compute the derivatives of $h$. If we denote $\phi(z) \coloneqq \sqrt{z}$ the complex square root, an elementary calculation noting that $\phi$ is holomorphic (recall that we are away from $\H$) implies that, if $y \in \Omega$,
			\begin{equation*}
				\partial_{y_j} h(y)
				=
				\partial_{y_j} \Re (\phi(\zeta(y)))
				=
				\Re (\phi'(\zeta(y)) \, \partial_{y_j} \zeta(y))
				=
				\Re \left( \frac{1}{2\sqrt{\zeta(y)}} \left(\partial_{y_j} \zeta_1(y) + i\partial_{y_j} \zeta_2(y)\right) \right),
			\end{equation*}
			and similarly,
			\begin{align*}
				\partial_{y_j y_j} h(y)
				&=
				\Re \left(\phi''(\zeta(y)) \left(\partial_{y_j} \zeta(y)\right)^2 + \phi'(\zeta(y)) \, \partial_{y_j y_j} \zeta(y) \right)
				\\ &=
				\Re \left( - \frac{1}{4(\sqrt{\zeta(y)})^3} \left(\partial_{y_j} \zeta_1(y) + i\partial_{y_j} \zeta_2(y)\right)^2 + \frac{1}{2\sqrt{\zeta(y)}} \left( \partial_{y_j y_j} \zeta_1(y) + i \partial_{y_j y_j} \zeta_2(y) \right) \right).
			\end{align*}
			
			\namedstep{estimates for the gradient of $h$}\label{it:barriers_proof_step3} If $j = 1, \ldots, n-2$, then \eqref{eq:zeta_1_gradient} and \eqref{eq:zeta_2_gradient} yield
			\begin{align*}
				\abs{\Re \left( \frac{1}{\sqrt{\zeta(y)}} \left(\partial_{y_j} \zeta_1(y) + i \partial_{y_j} \zeta_2(y)\right) \right)}
				&\leq
				\abs{\frac{1}{\sqrt{\zeta(y)}}} \left( \abs{\partial_{y_j} \zeta_1(y)} + \abs{\partial_{y_j} \zeta_2(y)} \right)
				\\ &\leq
				C \sigma\left(P^{-1}(y)\right) \abs{\zeta(y)}^{-1/2},
			\end{align*}
			where we denote $P^{-1}(y) \coloneqq (y'', \zeta_1(y), \zeta_2(y))$ for $y \in \Omega$, in accordance to \cref{subsec:proof_coordinates}, and we denote $\sigma(x) = \sigma_\gamma(x) + \sigma_\Gamma(x)$ as in \cref{lem:derivatives_P-1}. However, for $j = n-1$, we have that, especially recalling that $\nabla \zeta_1$ resembles $e_{n-1}$ by \eqref{eq:zeta_1_gradient}, $\partial_{y_j} \zeta_1$ is close to 1, in the sense that
			\begin{equation*}
				\abs{\Re \left( \frac{1}{\sqrt{\zeta(y)}} \left(\partial_{y_{n-1}} \zeta_1(y) + i\partial_{y_{n-1}} \zeta_2(y)\right) \right) - \Re \left( \frac{1}{\sqrt{\zeta(y)}} \right)}
				\leq
				C\sigma\left(P^{-1}(y)\right) \abs{\zeta(y)}^{-1/2}.
			\end{equation*}
			Similarly, examining \eqref{eq:zeta_2_gradient} yields
			\begin{equation*}
				\abs{\Re \left( \frac{1}{\sqrt{\zeta(y)}} \left(\partial_{y_n} \zeta_1(y) + i\partial_{y_n} \zeta_2(y)\right) \right) - \Re \left( \frac{1}{\sqrt{\zeta(y)}} i \right)}
				\leq
				C\sigma\left(P^{-1}(y)\right) \abs{\zeta(y)}^{-1/2}.
			\end{equation*}
			
			Therefore, putting these 3 estimates together, we obtain
			\begin{align*}
				\abs{2\nabla h(y) - \frac{1}{\sqrt{\zeta(y)}}}^2
				&=
				\left|\left( \Re \left( \frac{1}{\sqrt{\zeta(y)}} \left(\partial_{y_j} \zeta_1(y) + i \partial_{y_j} \zeta_2(y)\right) \right) \right)_{j=1, \ldots, n} \right.
				\\ &\quad-
				\left.\left((0, \ldots, 0, \Re \left( \frac{1}{\sqrt{\zeta(y)}} \right), \Re \left( \frac{1}{\sqrt{\zeta(y)}} i \right)\right)\right|^2
				\\ &=
				\sum_{j=1}^{n-2} \abs{\Re \left( \frac{1}{\sqrt{\zeta(y)}} \left(\partial_{y_j} \zeta_1(y) + i \partial_{y_j} \zeta_2(y)\right) \right)}^2
				\\ &\quad+
				\abs{\Re \left( \frac{1}{\sqrt{\zeta(y)}} \left(\partial_{y_{n-1}} \zeta_1(y) + i\partial_{y_{n-1}} \zeta_2(y)\right) \right) - \Re \left( \frac{1}{\sqrt{\zeta(y)}} \right)}^2
				\\ & \quad +
				\abs{\Re \left( \frac{1}{\sqrt{\zeta(y)}} \left(\partial_{y_n} \zeta_1(y) + i\partial_{y_n} \zeta_2(y)\right) \right) - \Re \left( \frac{1}{\sqrt{\zeta(y)}} i \right)}^2
				\\ &\leq
				C\sigma\left(P^{-1}(y)\right)^2\abs{\zeta(y)}^{-1}.
			\end{align*} 
			Consequently, if $L \leq 1$,
			\begin{align*}
				\abs{4\abs{\nabla h(y)}^2 - \abs{\zeta(y)}^{-1}}
				&=
				\abs{\abs{2\nabla h(y)}^2 - \abs{\frac{1}{\sqrt{\zeta(y)}}}^2}
				\\ &\leq
				\abs{2\nabla h(y) - \frac{1}{\sqrt{\zeta(y)}}}^2
				+ 2\abs{\frac{1}{\sqrt{\zeta(y)}}}\abs{2\nabla h(y) - \frac{1}{\sqrt{\zeta(y)}}}
				\\ &\leq
				C\sigma\left(P^{-1}(y)\right)^2\abs{\zeta(y)}^{-1} + 2\abs{\zeta(y)}^{-1/2}C\sigma\left(P^{-1}(y)\right)\abs{\zeta(y)}^{-1/2}
				\\ &\leq
				C\sigma\left(P^{-1}(y)\right)\abs{\zeta(y)}^{-1}.
			\end{align*} 
			Thus, choosing $L$ sufficiently small, we deduce that
			\begin{equation*}
				\abs{\nabla h(y)}^2 \geq \frac18 \abs{\zeta(y)}^{-1}.
			\end{equation*}
			
			
			\namedstep{estimates for the Laplacian of $h$}\label{it:barriers_proof_step4} Recalling the expression for the second derivatives of $h$ from \cref{it:barriers_proof_step1}, let us first do the following auxiliary computation:
			\begin{align*}
				\sum_{j=1}^n \left( \partial_{y_j} \zeta_1(y) + i \partial_{y_j} \zeta_2(y) \right)^2
				&=
				\sum_{j=1}^n \left( (\partial_{y_j} \zeta_1(y))^2 - (\partial_{y_j} \zeta_2(y))^2 \right)
				+ 2i \sum_{j=1}^n \partial_{y_j} \zeta_1(y) \, \partial_{y_j} \zeta_2(y)
				\\ &=
				\abs{\nabla \zeta_1(y)}^2 - \abs{\nabla \zeta_2(y)}^2 + 2i \nabla \zeta_1(y) \cdot \nabla \zeta_2(y),
			\end{align*}
			Taking into account the estimates from \cref{th:complex_coordinates}, we first obtain
			\begin{equation*}
				1-C\sigma\left(P^{-1}(y)\right)
				\leq
				\abs{\nabla \zeta_1(y)}, \abs{\nabla \zeta_2(y)}
				\leq
				1+C\sigma\left(P^{-1}(y)\right),
			\end{equation*}
			which yields
			\begin{equation*}
				\abs{\abs{\nabla \zeta_1(y)}^2 - \abs{\nabla \zeta_2(y)}^2}
				\leq
				\left(1+C\sigma\left(P^{-1}(y)\right)\right)^2 - \left(1-C\sigma\left(P^{-1}(y)\right)\right)^2
				\leq
				4C\sigma\left(P^{-1}(y)\right).
			\end{equation*}
			On the other hand, again recalling \cref{th:complex_coordinates}, we have
			\begin{align*}
				\abs{\nabla \zeta_1(y) \cdot \nabla \zeta_2(y)}
				&=
				\abs{ \left( e_{n-1} + (\nabla \zeta_1(y) - e_{n-1}) \right) \cdot \left( e_n + (\nabla \zeta_2(y) - e_n) \right)}
				\\ &\leq
				\abs{ e_{n-1} \cdot e_n } + \abs{ (\nabla \zeta_1(y) - e_{n-1}) \cdot e_n } + \abs{e_{n-1} \cdot (\nabla \zeta_2(y) - e_n) }
				\\ &\quad + \abs{ (\nabla \zeta_1(y) - e_{n-1}) \cdot (\nabla \zeta_2(y) - e_n) }
				\\ &\leq
				C\sigma\left(P^{-1}(y)\right) + C\sigma\left(P^{-1}(y)\right)^2
				\leq
				C\sigma\left(P^{-1}(y)\right)
			\end{align*}
			if $L$ is small enough. All these estimates together give
			\begin{equation*}
				\abs{\sum_{j=1}^n \left( \partial_{y_j} \zeta_1(y) + i \partial_{y_j} \zeta_2(y) \right)^2}
				\leq
				C\sigma\left(P^{-1}(y)\right).
			\end{equation*}
			
			Now, recalling the expression for the derivatives of $h$ from \cref{it:barriers_proof_step1}, and also \cref{th:complex_coordinates}, we can finally bound
			\begin{align*}
				\abs{\Delta h(y)}
				&\leq
				C \left( \abs{\zeta(y)}^{-3/2} \abs{\sum_{j=1}^n \left( \partial_{y_j} \zeta_1(y) + i \partial_{y_j} \zeta_2(y) \right)^2}
				+ \abs{\zeta(y)}^{-1/2} \left(\abs{D^2 \zeta_1(y)} + \abs{D^2 \zeta_2(y)} \right)
				\right)
				\\ &\leq
				C \left( \abs{\zeta(y)}^{-3/2} \sigma\left(P^{-1}(y)\right) + \abs{\zeta(y)}^{-1/2} \frac{\sigma\left(P^{-1}(y)\right)}{\rho(\zeta(y))} \right).
			\end{align*}
			
			
			\namedstep{an estimate for $\sigma(P^{-1}(y))$}\label{it:barriers_proof_step_aux} To combine the previous steps and show \eqref{eq:claim_barrier}, it is relevant to know more about $\sigma(P^{-1}(y))$. For that purpose, we first claim that, if $L$ is small enough, it holds 
			\begin{equation} \label{eq:claim_barrier_aux}
				y \in B_r \cap \Omega 
				\; \implies \; 
				P^{-1}(y) \in B_{3r}.
			\end{equation}
			Indeed, using \eqref{eq:zeta_1_pointwise} we can estimate (recalling that $\gamma(0) = 0$)
			\begin{align*}
				\abs{\zeta_1(y)}
				&\leq
				\abs{\zeta_1(y) - (y_{n-1} - \gamma(y''))} + \abs{y_{n-1}} + \abs{\gamma(y'')-\gamma(0)}
				\\ &\leq
				CL \abs{\zeta(y)} + \abs{y_{n-1}} + L\abs{y''}
				\\ &\leq
				CL \abs{\zeta(y)} + (1 + L)r,
			\end{align*}
			and the same estimate holds for $\zeta_2$ by \eqref{eq:zeta_2_pointwise}. Hence,
			\begin{equation*}
				\abs{\zeta_j(y)}^2
				\leq \left( CL\abs{\zeta(y)} + (1 + L)r \right)^2
				\leq 2C^2L^2\abs{\zeta(y)}^2 + 2(1 + L)^2r^2, \qquad j \in \{1,2\}.
			\end{equation*}
			Adding both estimates and choosing $L$ small enough so that $4C^2L^2 \leq 1 - \frac{(1+L)^2}{2}$ yields the bound $\abs{\zeta(y)}^2 \leq 8r^2$, and therefore
			\begin{equation*}
				\abs{P^{-1}(y)}
				= \left( \abs{y'}^2 + \abs{\zeta(y)}^2 \right)^{1/2}
				\leq \left( r^2 + 8r^2 \right)^{1/2}
				= 3r,
			\end{equation*}
			which finishes the proof of \eqref{eq:claim_barrier_aux}.
			
			Then, if $y \in \Omega \cap B_r$, it holds $x \coloneqq P^{-1}(y) \in B_{3r}$, whence $\rho(x) < 3r$ (see \eqref{def:rho}). Thus, recalling the definition of $\sigma$ in \eqref{def:sigma}, it holds 
			\begin{align*}
				\sigma(P^{-1}(y))
				& =
				\sigma(x)
				=
				\norm{\nabla \gamma}_{L^\infty(B''_{\rho(x)}(x''))}
				+ \norm{\nabla \Gamma}_{L^\infty(B'_{2\rho(x)}(x'', x_{n-1}+\gamma(x''))}
				\\ & \leq 
				\norm{\nabla \gamma}_{L^\infty(B''_{\rho(x)}(x''))}
				+ \norm{\nabla \Gamma}_{L^\infty(B'_{2\rho(x) + \abs{\gamma(x'') - \gamma(0)}}(x'))}
				\\ & \leq 
				\norm{\nabla \gamma}_{L^\infty(B''_{3r}(x''))}
				+ \norm{\nabla \Gamma}_{L^\infty(B'_{6r + Lr}(x'))}
				\\ & \leq 
				\norm{\nabla \gamma}_{L^\infty(B''_{6r})}
				+ \norm{\nabla \Gamma}_{L^\infty(B'_{10r})},
			\end{align*}
			where we have also used $L \leq 1$.
			
			
			\namedstep{end of the proof} The estimates from \cref{it:barriers_proof_step4} and the definition of $h$ allow us to give a precise estimate to the left hand side of \eqref{eq:claim_barrier}:
			\begin{align*}
				h(y) \abs{\Delta h(y)}
				&\leq
				C \abs{\zeta(y)}^{1/2} \cos\left(\frac{\theta_{\zeta(y)}}{2}\right) \left( \abs{\zeta(y)}^{-3/2} \sigma\left(P^{-1}(y)\right) + \abs{\zeta(y)}^{-1/2} \frac{\sigma\left(P^{-1}(y)\right)}{\rho(\zeta(y))} \right)
				\\ &=
				C\sigma\left(P^{-1}(y)\right) \abs{\zeta(y)}^{-1} \cos\left(\frac{\theta_{\zeta(y)}}{2}\right) \left( 1 + \frac{\abs{\zeta(y)}}{\rho(\zeta(y))} \right).
			\end{align*}
			Now note that, by definition of $\rho$ (see \eqref{def:rho}), if $\zeta_1(y) \geq 0$, then $\rho(\zeta(y)) = \abs{\zeta(y)}$, which implies that
			\begin{equation} \label{eq:h_Deltah}
				h(y) \abs{\Delta h(y)}
				\leq
				C\sigma\left(P^{-1}(y)\right) \abs{\zeta(y)}^{-1}.
			\end{equation}
			On the other hand, when $\zeta_1(y) < 0$, we have $\rho(\zeta(y)) = \abs{\zeta_2(y)}$, and we can compute
			\begin{align*}
				\cos\left(\frac{\theta_{\zeta(y)}}{2}\right) \frac{\abs{\zeta(y)}}{\rho(\zeta(y))}
				&=
				\cos\left(\frac{\theta_{\zeta(y)}}{2}\right) \frac{\abs{\zeta(y)}}{\abs{\zeta_2(y)}}
				=
				\frac{\cos(\theta_{\zeta(y)} / 2)}{\abs{\sin(\theta_{\zeta(y)})}}
				\\ &=
				\frac{\cos(\theta_{\zeta(y)} / 2)}{2\abs{\sin(\theta_{\zeta(y)}/2)} \cos(\theta_{\zeta(y)}/2) }
				=
				\frac{1}{2\abs{\sin(\theta_{\zeta(y)}/2)}}
				\leq
				\frac{1}{2\sin(\pi/4)}
				=
				\frac{\sqrt{2}}{2}
			\end{align*}
			because $\theta_{\zeta(y)} \in \left(-\pi, -\frac{\pi}{2}\right) \cup  \left(\frac{\pi}{2}, \pi \right)$ since $\zeta_1(y) < 0$. This implies that \eqref{eq:h_Deltah} also holds when $\zeta_1(y) < 0$, so in fact it is always true.
			
			Thus, recalling the estimates from \cref{it:barriers_proof_step3,it:barriers_proof_step_aux} and our choice of $\varepsilon$ in the statement, we can finally verify \eqref{eq:claim_barrier}:
			\begin{align*}
				h(y) \abs{\Delta h(y)}
				&\leq
				C\sigma\left(P^{-1}(y)\right) \abs{\zeta(y)}^{-1}
				\leq
				C \left( \norm{\nabla\gamma}_{L^\infty(B''_{10r})} + \norm{\nabla\Gamma}_{L^\infty(B'_{10r})} \right) \abs{\zeta(y)}^{-1}
				\\ &\leq
				\frac{\varepsilon}{8} \abs{\zeta(y)}^{-1}
				\leq
				\varepsilon \abs{\nabla h(y)}^2,
			\end{align*}
			which finishes the proof that $-\Delta(h^{1+\varepsilon}) \leq 0$, as explained in \cref{it:barriers_proof_step1}.
			
			
			\namedstep{the supersolution} The proof of the other estimate, namely $-\Delta (h^{1-\varepsilon})(y) \geq 0$, is very simple now that we have established \eqref{eq:claim_barrier}:
			\begin{align*}
				\Delta (h^{1-\varepsilon})(y)
				&=
				(1-\varepsilon) \left( -\varepsilon h^{-\varepsilon - 1}(y) \abs{\nabla h(y)}^2 + h^{-\varepsilon}(y) \Delta h(y) \right)
				\\ &=
				(1-\varepsilon) h^{-\varepsilon - 1}(y) \left(-\varepsilon \abs{\nabla h(y)}^2 + h(y)\Delta h(y) \right)
				\\ &\leq
				(1-\varepsilon) h^{-\varepsilon - 1}(y) \left(-\varepsilon \abs{\nabla h(y)}^2 + h(y)\abs{\Delta h(y)}
				\right)
				\leq
				0.
			\end{align*}
		\end{enumerate}
	\end{proof}
	
	
	As a consequence of having found these sub- and supersolutions, we are able to construct barriers with very precise growth near the boundary, at given scales, as in \cite[Proposition 3.3]{ClaraC1}.
	
	\begin{corollary} \label{cor:equiv_prop3.3_from_clara}
		Let $\Omega$ be a Lipschitz slit domain as in \cref{def:lip_slit_domain} with Lipschitz constant $L$. Assume $L$ is small enough and let us define $h$ as in \cref{th:barriers}. Also, for each $r > 0$, set $\varepsilon \coloneqq C(\norm{\nabla\gamma}_{L^\infty(B''_{10r})} + \norm{\nabla\Gamma}_{L^\infty(B'_{10r})})$ where $C$ is the dimensional constant from \cref{th:barriers}. Then, there exists a solution to
		\begin{equation*}
			\left\{
			\begin{aligned}
				\Delta \varphi_r &= 0 &&\text{in } \Omega \cap B_r, \\
				\varphi_r &= 0 &&\text{on } \partial \Omega \cap B_r,
			\end{aligned}
			\right.
		\end{equation*}
		which detaches from the boundary growing like
		\begin{equation} \label{eq:varphi_sandwich}
			(3\sqrt{r})^{-\varepsilon} h(y)^{1+\varepsilon}
			\leq
			\varphi_r(y)
			\leq
			(3\sqrt{r})^{\varepsilon} h(y)^{1-\varepsilon},
			\qquad
			y \in \Omega \cap B_r,
		\end{equation}
		and moreover it holds
		\begin{equation*}
			\norm{\varphi_r - h}_{L^\infty (\Omega \cap B_r)}
			\leq
			15\sqrt{r} \varepsilon
			=
			15C(\norm{\nabla\gamma}_{L^{\infty}(B''_{10r})} + \norm{\nabla\Gamma}_{L^{\infty}(B'_{10r})}) \sqrt{r}.
		\end{equation*}
	\end{corollary}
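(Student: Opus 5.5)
The plan is to take $\varphi_r$ to be the harmonic replacement of $h$ in $\Omega \cap B_r$: the solution of $\Delta\varphi_r = 0$ in $\Omega\cap B_r$ with $\varphi_r = h$ on $\partial(\Omega\cap B_r)$. Since $h$ vanishes continuously on $\S$, this gives $\varphi_r = 0$ on $\partial\Omega\cap B_r$. Existence follows from Perron's method or the weak formulation. Every boundary point is regular, since the complement of $\Omega$ contains a Lipschitz hypersurface piece near each slit point, so Wiener's criterion holds; hence $\varphi_r$ attains the boundary data continuously. The sandwich \eqref{eq:varphi_sandwich} will then follow from the comparison principle applied to the sub- and supersolutions of \cref{th:barriers}. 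These must first be normalized so that they lie below, respectively above, $h$ on $\partial B_r \cap \Omega$.

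For the normalization I bound $h$ on $B_r\cap\Omega$. By \eqref{eq:claim_barrier_aux}, $|\zeta(y)| \le \sqrt{8}\, r$ there, so $h(y) \le |\zeta(y)|^{1/2} \le 8^{1/4}\sqrt r \le 3\sqrt r$. Set $M := 3\sqrt r$. Then $h \le M$ in $\Omega \cap B_r$, which gives pointwise
\begin{equation*}
M^{-\varepsilon} h^{1+\varepsilon} = h\,(h/M)^{\varepsilon} \le h \le h\,(M/h)^{\varepsilon} = M^{\varepsilon} h^{1-\varepsilon}.
\end{equation*}
In particular $M^{-\varepsilon}h^{1+\varepsilon} \le \varphi_r \le M^{\varepsilon}h^{1-\varepsilon}$ on $\partial(\Omega\cap B_r)$, because on $\partial B_r$ we have $\varphi_r = h$, and on $\S$ all three functions vanish. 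By \cref{th:barriers}, $M^{-\varepsilon}h^{1+\varepsilon}$ is subharmonic and $M^{\varepsilon}h^{1-\varepsilon}$ is superharmonic in $\Omega\cap B_r$. Both are $C^{1,1}_{\rm loc}$ (by \cref{th:complex_coordinates}) away from $\{h=0\}\cap\Omega = \emptyset$, so the weak comparison principle applies and yields \eqref{eq:varphi_sandwich}.

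For the $L^\infty$ estimate I subtract $h$ from the sandwich. Writing $t := h/M \in [0,1]$, this gives
\begin{equation*}
|\varphi_r - h| \le M\max\{t - t^{1+\varepsilon},\, t^{1-\varepsilon} - t\}.
\end{equation*}
I will use the elementary bounds $t - t^{1+\varepsilon} = t(1-t^{\varepsilon}) \le t\,\varepsilon\log(1/t) \le \varepsilon/e$ and $t^{1-\varepsilon}-t = t^{1-\varepsilon}(1-t^{\varepsilon}) \le \varepsilon\, t^{1-\varepsilon}\log(1/t) \le \varepsilon/((1-\varepsilon)e)$. Both are at most $\varepsilon$ for small $\varepsilon$. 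Hence $\|\varphi_r-h\|_{L^\infty} \le 3\sqrt r\,\varepsilon \le 15\sqrt r\,\varepsilon$. The generous constant $15$ leaves room if one prefers the cruder bound $h\le 3\sqrt r$ with slack.

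The main obstacle I expect is justifying the comparison principle rigorously in this setting. The barriers are only $C^{1,1}_{\rm loc}$ with the differential inequality holding a.e., and the domain is nonsmooth. For the first point, $C^{1,1}_{\rm loc}$ functions satisfying $\Delta w \ge 0$ a.e. are weak subsolutions, so weak comparison on compactly contained subdomains applies. For the second, I will handle boundary continuity with an $\delta$-argument: compare on $\{y\in\Omega\cap B_r: \dist(y,\partial(\Omega\cap B_r))>\delta\}$ using continuity of all functions up to the boundary, then let $\delta\to0$.
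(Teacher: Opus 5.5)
Your proposal is correct and follows essentially the same route as the paper: you take the harmonic replacement of $h$ in $\Omega\cap B_r$, bound $h\le 3\sqrt r$ via the estimate on $|\zeta|$, and compare with the normalized sub/supersolutions $(3\sqrt r)^{-\varepsilon}h^{1+\varepsilon}$ and $(3\sqrt r)^{\varepsilon}h^{1-\varepsilon}$ from \cref{th:barriers}. The main difference is in the $L^\infty$ step, where you compare $\varphi_r$ with $h$ one side at a time instead of bounding by the full width $(3\sqrt r)^{\varepsilon}h^{1-\varepsilon}-(3\sqrt r)^{-\varepsilon}h^{1+\varepsilon}$ of the sandwich, which gives the better constant $3$ in place of $15$; you are also more explicit than the paper about the comparison principle holding for $C^{1,1}_{\mathrm{loc}}$ functions whose differential inequalities hold only a.e.
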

	\begin{proof}
		Fix $r > 0$ and define $\varphi_r$ as the solution to
		\begin{equation} \label{eq:def_varphi}
			\left\{
			\begin{aligned}
				\Delta \varphi_r &= 0 &&\text{in } \Omega \cap B_r, \\
				\varphi_r &= h &&\text{on } \partial (\Omega \cap B_r).         
			\end{aligned}
			\right.
		\end{equation}
		The pointwise estimates \eqref{eq:zeta_1_pointwise} and \eqref{eq:zeta_2_pointwise} from \cref{th:complex_coordinates} combined with the fact that $\gamma(0) = \Gamma(0) = 0$ imply that, for any $y \in \Omega \cap B_r$,
		\begin{equation*}
			\abs{\zeta_1(y)} \leq \abs{y_{n-1}} + \abs{\gamma(y'')} + CL\abs{\zeta(y)} \leq r(1+L) + CL\abs{\zeta(y)},
		\end{equation*}
		and, analogously, $\abs{\zeta_2(y)} \leq r(1+L) + CL\abs{\zeta(y)}$. Adding the previous bounds yields 
		\begin{equation*}
			\abs{\zeta_1(y)} + \abs{\zeta_2(y)} \leq 2r(1+L) + 2CL (\abs{\zeta_1(y)} + \abs{\zeta_2(y)}),
		\end{equation*}
		and choosing $L$ sufficiently small allows us to absorb the terms on the right-hand side to obtain
		\begin{equation*}
			\norm{\zeta_1}_{L^\infty(\Omega \cap B_r)} \leq 3r
			\quad \text{and} \quad
			\norm{\zeta_2}_{L^\infty(\Omega \cap B_r)} \leq 3r.
		\end{equation*}
		
		Now, by definition of $h$, it holds
		\begin{equation} \label{eq:bound_h}
			h(y)
			\leq
			\sqrt{\abs{\zeta_1(y) + i\zeta_2(y)}}
			\leq
			\sqrt{ \sqrt{(3r)^2 + (3r)^2} }
			\leq
			3\sqrt{r},
			\quad
			y \in \overline{\Omega \cap B_r}.
		\end{equation}
		This implies that $h \leq (3\sqrt{r})^\varepsilon h^{1-\varepsilon}$ on $\partial (\Omega \cap B_r)$, whence \eqref{eq:def_varphi} and \cref{th:barriers} yield, by the maximum principle,
		\begin{equation*}
			\varphi_r (y)
			\leq
			(3\sqrt{r})^\varepsilon h^{1-\varepsilon},
			\qquad
			y \in \Omega \cap B_r.
		\end{equation*}
		Similarly, $h \geq (3\sqrt{r})^{-\varepsilon} h^{1+\varepsilon}$ on $\partial (\Omega \cap B_r)$, so again using \eqref{eq:def_varphi}, \cref{th:barriers} and the maximum principle, we end the proof of \eqref{eq:varphi_sandwich}.
		
		We continue by noting that, on top of \eqref{eq:varphi_sandwich}, we know (from \eqref{eq:bound_h}) that $(3\sqrt{r})^{-\varepsilon} h^{1+\varepsilon} \leq h \leq (3\sqrt{r})^\varepsilon h^{1-\varepsilon}$ in $\Omega \cap B_r$. Therefore, we may bound, always having \eqref{eq:bound_h} in mind,
		\begin{align*}
			\norm{\varphi_r - h}_{L^\infty(\Omega \cap B_r)}
			&\leq
			\norm{(3\sqrt{r})^\varepsilon h^{1-\varepsilon} - (3\sqrt{r})^{-\varepsilon} h^{1+\varepsilon}}_{L^\infty(\Omega \cap B_r)}
			\\ &\leq
			\sup_{t \in [0, 3\sqrt{r}]} \left[ (3\sqrt{r})^\varepsilon t^{1-\varepsilon} - (3\sqrt{r})^{-\varepsilon} t^{1+\varepsilon} \right]
			\\ &=
			3\sqrt{r} \sup_{t \in [0, 3\sqrt{r}]} \left[ \left( \frac{t}{3\sqrt{r}} \right)^{1-\varepsilon}  - \left( \frac{t}{3\sqrt{r}} \right)^{1+\varepsilon} \right]
			\\ &=
			3\sqrt{r} \sup_{\tau \in [0, 1]} \left[ \tau^{1-\varepsilon} - \tau^{1+\varepsilon} \right]
			\\ &\leq
			15 \sqrt{r} \varepsilon,
		\end{align*}
		where the last inequality is a simple real variable exercise\footnote{
			Given $\varepsilon > 0$, the maximum of $f_\varepsilon(\tau) \coloneqq \tau^{1-\varepsilon} - \tau^{1+\varepsilon}$ for $\tau \in [0, 1]$ is attained at  $\tau_\varepsilon = \left( \frac{1-\varepsilon}{1+\varepsilon} \right)^{1/(2\varepsilon)} \in (0, 1)$, and
			\begin{equation*}
				f_\varepsilon(\tau_\varepsilon)
				=
				\tau_\varepsilon \left[ \left( \frac{1-\varepsilon}{1+\varepsilon} \right)^{-\frac12} - \left( \frac{1-\varepsilon}{1+\varepsilon} \right)^{\frac{1}{2}} \right]
				\leq
				\sqrt{\frac{1+\varepsilon}{1-\varepsilon}} - \sqrt{\frac{1-\varepsilon}{1+\varepsilon}}
				=
				\frac{ \dfrac{1+\varepsilon}{1-\varepsilon} - \dfrac{1-\varepsilon}{1+\varepsilon} }{ \sqrt{\dfrac{1+\varepsilon}{1-\varepsilon}} + \sqrt{\dfrac{1-\varepsilon}{1+\varepsilon}} }
				=
				\frac{ \dfrac{4\varepsilon}{(1-\varepsilon)(1+\varepsilon)} }{ \sqrt{\dfrac{1+\varepsilon}{1-\varepsilon}} + \sqrt{\dfrac{1-\varepsilon}{1+\varepsilon}} }
				\leq
				5\varepsilon
			\end{equation*}
			as soon as $\varepsilon$ is small enough, which amounts to $L$ being small enough.
		}.
	\end{proof}
	
	Note that in the proof of \cref{cor:equiv_prop3.3_from_clara} we have shown that $h(y) \leq 3\sqrt{r}$ for $y \in \overline{\Omega \cap B_r}$. In the following result we prove a similar lower bound for $h$ which holds for points that lie away from the slit or, more precisely, inside certain conical regions of aperture depending on $L$.
	\begin{lemma} \label{lem:h_comparable_sqrt_r}
		Let $\alpha_0 < \pi$ and $\beta_0 < \frac{\pi}{2}$ be fixed constants and let $\alpha$ and $\beta$ be angles such that $|\alpha| \leq \alpha_0$ and $|\beta| \leq \beta_0$. 
		Let $\Omega$ be a Lipschitz slit domain as in \cref{def:lip_slit_domain} with Lipschitz constant $L \coloneqq L(\alpha_0, \beta_0)$ small enough, depending on $\alpha_0$ and $\beta_0$. Define $h$ as in \cref{th:barriers}. 
		Then, for all $r \in (0, 1)$ and any unit vector $e'' \in \R^{n-2}$, any point of the form 
		\begin{equation*}
			y \coloneqq (r \sin(\beta) e'', r \cos{\alpha} \cos{\beta}, r \sin{\alpha} \cos{\beta}),
		\end{equation*}
		belongs to $\Omega$. Moreover,
		\begin{equation*}
			\kappa \sqrt{r} \leq h(y) \leq 3\sqrt{r},
		\end{equation*}
		where $0 < \kappa < 1$ is a constant depending only on $\alpha_0$ and $\beta_0$.
	\end{lemma}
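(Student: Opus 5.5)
The plan has three parts. Membership in $\Omega$ comes from \cref{lem:spherical_distance_boundary}. The upper bound is the estimate \eqref{eq:bound_h}. The lower bound follows from the explicit form $h(y)=\abs{\zeta(y)}^{1/2}\cos(\theta_{\zeta(y)}/2)$ together with the pointwise closeness of $\zeta$ to the naive flattening in \cref{th:complex_coordinates}.

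\emph{Membership and upper bound.} For $L$ small depending on $\alpha_0,\beta_0$, \cref{lem:spherical_distance_boundary} applied with $e=e(\alpha,\beta)$ gives $\dist(y,\partial\Omega)\ge c_0 r>0$, so $y\in\Omega$. Since $\abs{y}=r$, the bound \eqref{eq:bound_h} in the proof of \cref{cor:equiv_prop3.3_from_clara} gives $h(y)\le 3\sqrt r$ directly. To apply it at a point with $\abs{y}=r$, I would either run that proof on $B_{r'}$ with $r'\downarrow r$, or note that its argument only used $\abs{y}\le r$.

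\emph{Lower bound.} First, $\abs{\zeta(y)}\ge \rho(\zeta(y))\ge \tfrac12\dist(y,\S)\ge \tfrac{c_0}{2}r$, using \cref{lem:rho_dist_comparable} and then \cref{lem:spherical_distance_boundary}. This gives $\abs{\zeta(y)}^{1/2}\ge \sqrt{c_0/2}\,\sqrt r$.

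It remains to keep $\theta_{\zeta(y)}$ away from $\pm\pi$, i.e.\ to show $\cos(\theta_{\zeta(y)}/2)\ge c(\alpha_0,\beta_0)>0$. By \eqref{eq:zeta_1_pointwise} and \eqref{eq:zeta_2_pointwise},
\[
\zeta(y)=\big(y_{n-1}-\gamma(y''),\,y_n-\Gamma(y')\big)+O(L\abs{\zeta(y)}).
\]
Also $\abs{\gamma(y'')}+\abs{\Gamma(y')}\le 2L(1+L)\, r$, because $\gamma(0)=\Gamma(0)=0$ and $\abs{y}=r$. Hence $\zeta(y)=r\cos\beta\,(\cos\alpha,\sin\alpha)+E$ with $\abs{E}\le CL(r+\abs{\zeta(y)})$. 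Since $\abs{\zeta(y)}\le 3r$ was shown in \cref{cor:equiv_prop3.3_from_clara}, this becomes $\abs{E}\le CLr$.

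The main point of the proof is then a planar estimate. The unperturbed vector $r\cos\beta\,e^{i\alpha}$ has modulus at least $r\cos\beta_0$ and argument $\alpha\in[-\alpha_0,\alpha_0]$. A perturbation of relative size $CL/\cos\beta_0$ changes the argument by at most $\arcsin(CL/\cos\beta_0)$. Choosing $L$ small so that this is at most $(\pi-\alpha_0)/2$ gives $\abs{\theta_{\zeta(y)}}\le(\pi+\alpha_0)/2$. The principal branch is used consistently here, since the argument stays inside $(-\pi,\pi)$ along the whole perturbation. Therefore $\cos(\theta_{\zeta(y)}/2)\ge\cos\big((\pi+\alpha_0)/4\big)>0$.

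Combining the two parts yields the lower bound with
\[
\kappa=\min\Big\{\sqrt{c_0/2}\,\cos\big((\pi+\alpha_0)/4\big),\tfrac12\Big\}.
\]
If the $n$-dependence of $c_0$ needs to be removed, one can instead use the direct bound $\abs{\zeta(y)}\ge r\cos\beta_0-CLr\ge \tfrac12 r\cos\beta_0$.
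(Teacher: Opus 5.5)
Your proposal is correct and follows essentially the same route as the paper: the upper bound comes from \eqref{eq:bound_h}, and the lower bound comes from $h=\abs{\zeta}^{1/2}\cos(\theta_{\zeta}/2)$, the estimate $\abs{\zeta(y)-r\cos\beta\, e^{i\alpha}}\le CLr$ obtained from \eqref{eq:zeta_1_pointwise}--\eqref{eq:zeta_2_pointwise}, and the resulting bound $\sin\delta_{\max}\le CL/\cos\beta_0$ on the deviation of the argument. Your primary bound on $\abs{\zeta(y)}$ via \cref{lem:rho_dist_comparable} and \cref{lem:spherical_distance_boundary} is valid but makes $\kappa$ depend on $n$ through $c_0$; your fallback $\abs{\zeta(y)}\ge r\cos\beta_0-CLr\ge\tfrac12 r\cos\beta_0$ is exactly the paper's argument and gives the stated dependence on $\alpha_0,\beta_0$ only.
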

	
	\begin{proof}
		First, since $\Omega$ is a Lipschitz slit domain, we can ensure that any point of the form $y = (r \sin (\beta) e'', r \cos{\alpha} \cos{\beta}, r \sin{\alpha} \cos{\beta})$ is in $\Omega$ by taking the Lipschitz constant $L$ small enough depending on $\alpha_0$ and $\beta_0$. Moreover, the upper bound follows from \eqref{eq:bound_h} because $y \in \Omega \cap \overline{B_r}$.
		
		Let us then turn to the lower bound. Recalling the explicit formula for $h$ from \cref{th:barriers}
		\begin{equation}
			\label{eq:h_explicit_formula}
			h(y) = |\zeta(y)|^{1/2} \cos \left( \frac{\theta_{\zeta(y)}}{2}\right), 
		\end{equation}
		the strategy will be to obtain lower bounds for $|\zeta(y)|$ and for $ \cos \left( \theta_{\zeta(y)} / 2\right)$ separately. 
		
		For the lower bound for $|\zeta(y)|$, applying \eqref{eq:zeta_1_pointwise} and \eqref{eq:zeta_2_pointwise} from \cref{th:complex_coordinates} at $y$, we obtain
		\begin{align*}
			|\zeta_1(y) - r \cos{\alpha} \cos{\beta}| 
			& \leq 
			Lr +  C L \big( |\zeta_1(y)| + |\zeta_2(y)|\big),
			\\ 
			|\zeta_2(y) - r\sin{\alpha} \cos{\beta}| 
			& \leq 
			L r + CL \big( |\zeta_1(y)| + |\zeta_2(y)| \big).
		\end{align*}
		Adding up the previous two estimates we get, after a simple triangle inequality, 
		\begin{equation*}
			|\zeta_1(y)| + |\zeta_2(y)| \leq 2Lr + 2CL \left( |\zeta_1(y)| + |\zeta_2(y)|\right) + r (|\!\cos \alpha| + \sin \alpha)\cos \beta.
		\end{equation*}
		Since $r (|\!\cos \alpha| + \sin \alpha)\cos \beta \leq r\sqrt{2}$ by Cauchy-Schwarz, the above estimate yields
		\begin{equation*}
			|\zeta_1(y)| + |\zeta_2(y)| \leq r\frac{\sqrt{2} + 2L}{1 - 2CL} \leq 2r
		\end{equation*}
		for $L$ small enough, and therefore, 
		\begin{equation} \label{eq:tmp_zeta_close_rei_alpha}
			\left| (\zeta_1(y) + i\zeta_2(y)) - r\cos{(\beta)} e^{i \alpha}\right|\leq 2Lr + 2C L (|\zeta_1(y)| + |\zeta_2(y)|) \leq (2 + 4C)L r.
		\end{equation}
		Using the reverse triangle inequality in \eqref{eq:tmp_zeta_close_rei_alpha}, we can get a lower bound for $|\zeta(y)|$:
		\begin{equation}
			\label{eq:modulus_lower_bound}
			|\zeta(y)| = |\zeta_1(y) + i\zeta_2(y)| \geq  r \left(\cos\beta  - (2+4C)L\right) \geq r \left(\cos\beta_0  - (2+4C)L\right) \geq r\frac{\cos\beta_0}{2},
		\end{equation}
		where we have chosen $L$ small enough depending on $\beta_0$.
		
		Let us turn to the lower bound for $\cos \left(\theta_{\zeta(y)}/2\right)$. In fact, this is equivalent to $\theta_{\zeta(y)}$ having an upper bound smaller than $\pi$. Expressing $\zeta(y)$ in polar coordinates as $\zeta(y) = |\zeta(y)|e^{i \theta_{\zeta(y)}}$, then \eqref{eq:tmp_zeta_close_rei_alpha} implies that $\zeta(y)$ is contained in $B \coloneqq B_{rL(2 + 4C)}(r \cos (\beta) e^{i\alpha})$. Then, the maximum possible deviation of $\theta_{\zeta(y)}$ from $\alpha$ is
		\begin{equation*}
			\delta_\mathrm{max} \coloneqq \sup_{\xi \in B} |\theta_{\xi} - \alpha|,
		\end{equation*}
		and corresponds to the angle between the center of $B$ and the tangency point of the ball $B$ with respect to a line from the origin, so
		\begin{equation*}
			\sin{(\delta_{\max})} = \frac{rL(2 + 4C)}{r \cos \beta} \leq \frac{L(2 + 4C)}{\cos\beta_0},
		\end{equation*}
		and choosing $L$ sufficiently small depending on $\alpha_0$ and $\beta_0$, we can make  $\delta_\mathrm{max} \leq \frac{\pi - \alpha_0}{2}$, whence
		\begin{equation*}
			|\theta_{\zeta(y)}| \leq |\alpha| + \delta_{\max} \leq \alpha_0 + \frac{\pi - \alpha_0}{2} = \frac{\pi + \alpha_0}{2} \; (< \pi).
		\end{equation*}
		Consequently,
		\begin{equation}
			\label{eq:cos_lower_bound}
			\cos \left( \frac{\theta_{\zeta(y)}}{2}\right) \geq \cos\left(\frac{\pi+\alpha_0}{4}\right) > 0.
		\end{equation}
		Substituting \eqref{eq:modulus_lower_bound} and \eqref{eq:cos_lower_bound} in \eqref{eq:h_explicit_formula}, we get
		\begin{equation*}
			h(y) = |\zeta(y)|^{1/2} \cos \left( \frac{\theta_{\zeta(y)}}{2}\right) \geq \cos\left(\frac{\pi+\alpha_0}{4}\right) \left( r\frac{\cos\beta_0}{2}\right)^{1/2}
			=:
			\kappa r^{1/2},
		\end{equation*}
		which completes the proof with $\kappa = \kappa(\alpha_0, \beta_0) \in (0, 1)$.
	\end{proof}
	
	
	\section{Boundary behavior in \texorpdfstring{$C^1$}{C1} slit domains} \label{sec:C1_slit_domains}
	
	In this section we prove the main boundary estimates (\cref{th:lower_bound_thm,th:upper_bound_thm}) by combining the barriers constructed in the previous section with an iterative argument across scales. The geometry-dependent exponents of these barriers produce a small loss at each step, and the accumulation of these losses over the iteration yields the correction factors appearing in our main theorems. The iteration is inspired by the one developed in \cite{ClaraC1}, although some new ingredients are required to adapt to the more general geometry considered here.

	\subsection{Almost positivity of harmonic functions in slit domains}
	
	Before we prove our main theorems, we shall prove some auxiliary results, the first one of which is an almost positivity property for harmonic functions in slit domains. It can be seen as a quantitative version of the maximum principle.
	\begin{remark}
		Throughout the section we will work with weak solutions to the Dirichlet problem in $\Omega \cap B_1$. These are continuous in $\Omega \cap B_1$ by elementary interior regularity, and also continuous up to the slit ($\S$ is Lipschitz and has codimension 1, so it carries uniform positive capacity and Wiener's criterion applies). 
	\end{remark}
	
	\begin{lemma}\label{lem:almost_pos_sublem}
		Let $\Omega = \R^n \setminus \mathcal{S}$ be a Lipschitz slit domain as in \cref{def:lip_slit_domain} with Lipschitz constant $L \leq 1$. For a given small $\delta > 0$, let $\Omega_{\delta} = \{x \in \Omega : \dist(x,\mathcal{S}) \geq \delta\}$. Let $u$ satisfy
		\begin{equation*}
			\left\{
			\begin{aligned}
				\Delta u &= 0 &&\text{in } \Omega \cap B_1, \\
				u &= 0 &&\text{on } \mathcal{S} \cap B_1, \\
			\end{aligned}
			\right.
			\quad\text{and}\quad
			\left\{
			\begin{aligned}
				u &\geq 1 &&\text{in } \Omega_{\delta} \cap B_1, \\
				u &\geq -\delta &&\text{in } B_1.
			\end{aligned}
			\right.
		\end{equation*}
		Then, there exists a small constant $c = c(n) > 0$ such that for every $k \geq 0$ satisfying $k\delta \leq \frac{1}{10}$, we have
		\begin{equation*}
			u \geq -\delta(1-c)^k \quad\text{in } B_{1-5k\delta}.
		\end{equation*}
	\end{lemma}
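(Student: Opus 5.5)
Induction on $k$, with the induction step coming from a single comparison argument at scale $\delta$ around each point near the slit. The case $k=0$ is the hypothesis $u \geq -\delta$. For the step, suppose $u \geq -\delta(1-c)^k$ in $B_{1-5k\delta}$ and fix $x_0 \in B_{1-5(k+1)\delta}$. If $\dist(x_0,\S) \geq \delta$ then $u(x_0) \geq 1 > 0$ and there is nothing to prove. Otherwise $\dist(x_0,\S) < \delta$, and we work in the ball $B_{4\delta}(x_0) \subset B_{1-5k\delta}$. In that ball $u \geq -m$ with $m \coloneqq \delta(1-c)^k$, and $u \geq 1$ on $\Omega_\delta \cap B_{4\delta}(x_0)$.

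Rescale by $\delta$ around $x_0$, so that $B_{4\delta}(x_0)$ becomes $B_4$. The rescaled slit is still Lipschitz with constant $L \leq 1$. Since $\dist(x_0,\S)<\delta$ and $\S$ is a Lipschitz graph with small constant, $B_4$ contains a ball $B_{1/2}(z)$ at distance $\geq 1$ from the rescaled slit, for instance a point displaced from $x_0$ in the $\pm e_n$ direction by about $2$.

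Let $w$ be the harmonic measure in $(\Omega\cap B_4)\setminus \overline{B_{1/2}(z)}$ of $\partial B_{1/2}(z)$, that is, $w=1$ on $\partial B_{1/2}(z)$ and $w=0$ on $\S$ and on $\partial B_4$. Consider
\begin{equation*}
	v \coloneqq u + m - (1+m)\,w .
\end{equation*}
On $\partial B_{1/2}(z)$ we have $v \geq 1+m-(1+m)=0$, and on the rest of the boundary $v \geq -m+m-0=0$. By the maximum principle $v\geq 0$, hence
\begin{equation*}
	u \geq -m + (1+m)\,w \quad\text{in the relevant region}.
\end{equation*}
It remains to bound $w(x_0)$ from below. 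The point $x_0$ may lie arbitrarily close to the slit, so interior Harnack gives no uniform lower bound for $w(x_0)$ there. Two routes are available:
\begin{itemize}
	\item[(a)] Use that $u$ vanishes on $\S$ together with boundary H\"older continuity: near $\S$, $u$ is close to $0 \geq -m(1-c)$. Quantitatively, compare $u+m$ on $B_\delta(x_0)$ using the capacity density of the codimension-one Lipschitz slit, which gives $\operatorname{osc}$ decay: $u+m \geq$ (a fixed fraction of $m$) near points with a large harmonic-measure contribution from $\S$ (where $u=0$, i.e.\ $u+m = m$).
	\item[(b)] Use as exterior boundary data both $u+m \geq 1+m$ on the far ball and $u+m = m$ on $\S$. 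Then $u+m \geq m\,\omega_{\S} + (1+m)\,\omega_{\mathrm{ball}}$, where $\omega_{\S}$ and $\omega_{\mathrm{ball}}$ are the harmonic measures of $\S\cap B_4$ and of the ball. Since these harmonic measures together with that of $\partial B_4$ sum to $1$, it suffices that the harmonic measure of $\partial B_4$ seen from $x_0$ is at most $1-c$.
\end{itemize}
I will use (b). The needed estimate holds because $x_0$ lies within distance $1$ of the slit and $\partial B_4$ is far away. By the uniform capacity density of $\S$ (a Lipschitz hypersurface piece of size $\sim 1$ near $x_0$), the probability of hitting $\partial B_4$ before $\S\cup B_{1/2}(z)$, starting from any point within distance $1$ of $\S$ in $B_2$, is at most $1-c(n)$. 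This follows from Wiener-type capacity estimates or from an explicit barrier such as the fundamental solution of a ball placed against the slit.

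With that bound, $u(x_0)+m \geq m\cdot(\text{mass of }\S\text{ and ball}) \geq c\,m$, hence
\begin{equation*}
	u(x_0) \geq -(1-c)\,m = -\delta(1-c)^{k+1},
\end{equation*}
which closes the induction. The constraint $k\delta\leq 1/10$ only ensures that the balls $B_{4\delta}(x_0)$ stay inside $B_1$, with room to spare in the shrinking radii $1-5k\delta$.

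The main obstacle is the uniform (dimensional) lower bound on the harmonic measure of $\S\cup B_{1/2}(z)$ from points near the slit. I expect to obtain it by placing an exterior ball-free barrier: for $L\leq 1$, the slit locally contains a graph patch of diameter $\sim 1$, so its capacity is bounded below. Equivalently, the Newtonian potential of that patch (normalized) gives a subsolution that equals $1$ on the patch and is $\leq 1-c$ on $\partial B_4$, yielding the required bound uniformly in $L\leq 1$.
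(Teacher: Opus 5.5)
Your argument is correct, but it takes a different route from the paper's. The paper never uses the harmonic measure of the slit. It takes the truncation $w=\min\{u,0\}+\delta$, which is superharmonic in all of $B_1$ because $u=0$ on $\S$, and satisfies $0\le w\le\delta$. It observes that $w=\delta$ wherever $u\ge0$, in particular on a cone with vertex $x_0+\sqrt2\,\delta e_n$ whose intersection with $B_{2\delta}(x_0)$ lies in $\Omega_\delta$ (since $L\le1$) and has measure $\gtrsim\delta^n$, for every $x_0\in\S$. The weak Harnack inequality then gives $w\ge c\delta$ on $B_{2\delta}(x_0)$; the iteration in $k$ is the same as yours. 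So in the paper the gain comes from a full-dimensional region where $u\ge0$, and the only geometric input is an elementary cone inclusion. You instead extract the gain from the boundary value $u=0$ on $\S$, through a uniform capacity-density (Wiener/Bourgain-type) lower bound on the harmonic measure of $\S$. This is heavier, but it is exactly the generalization the paper mentions in the remark right after the lemma, and it uses the Lipschitz structure only through that bound.

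Three comments on your write-up.
\begin{enumerate}
\item In route (b) the auxiliary ball $B_{1/2}(z)$ is superfluous. The inequality $u(x_0)+m\ge m\,\omega_{\S}(x_0)\ge cm$ already closes the step, and the hypothesis $u\ge1$ on $\Omega_\delta$ is only needed for points at distance $\ge\delta$ from $\S$.
\item The capacity step should be stated precisely. Let $p\in\S$ be a closest slit point. Take $K$ to be the graph of $\Gamma$ over $\{x'\in B'_{1/4}(p') : x_{n-1}-p_{n-1}\le-|x''-p''|\}$; this set lies under $\gamma$ because $L\le1$, even when $p$ is on the edge. Then $K\subset B_{1/2}(p)\subset B_{3/2}(x_0)$, and $\operatorname{cap}(K)\ge c(n)$ by bi-Lipschitz quasi-invariance of capacity. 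What you need is that the equilibrium potential of $K$ at $x_0$ exceeds its supremum on $\partial B_4$ by a dimensional amount. Being merely $\le 1-c$ on $\partial B_4$, as you wrote, is not enough.
\item Your alternative barrier from ``a ball placed against the slit'' is not available, since $\S$ has empty interior.
\end{enumerate}
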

	\begin{proof}
		Define $w \coloneqq \min\{u,0\} + \delta$. Since $u$ is harmonic in $\Omega \cap B_1$ and $u = 0$ on $\mathcal{S} \cap B_1$, it follows that $w$ is superharmonic, namely $-\Delta w \geq 0$ in $B_1$. 
		Moreover, $0 \leq w \leq \delta$ by construction.
		
		Let $x_0 \in \mathcal{S} \cap B_{1-4\delta}$. Then, by the weak Harnack inequality (see \cite[Lemma B.4]{FR22}), 
		\begin{equation} \label{eq:weak_harnack_almos_pos}
			\inf_{B_{2\delta}(x_0)} w \geq c(n) \, \delta^{-n}\norm{w}_{L^1(B_{2\delta}(x_0))}.
		\end{equation}
		
		The idea now is to show that $w = \delta$ in a uniform portion of $B_{2\delta}(x_0)$, which will give us an effective pointwise lower bound for $w$. For that purpose, let $y_0 \coloneqq x_0 + \sqrt{2}\delta e_n$ and consider the cone with vertex $y_0$, axis $e_n$ and opening of $\pi/4$, namely
		\begin{equation*}
			\mathcal{C}_\delta(x_0) \coloneqq \big\{x \in \R^n : (x - y_0) \cdot e_n \geq {\textstyle \frac{1}{\sqrt{2}}}\abs{x - y_0} \big\}.
		\end{equation*}
		Then, it is clear that $y_0 \in B_{2\delta}(x_0)$ and $\abs{\mathcal{C}_\delta(x_0) \cap B_{2\delta}(x_0)} \geq c(n)\abs{B_{2\delta}(x_0)}$ (see \cref{fig:lemma51}).
		
		\begin{figure}[h]
			\centering
			\resizebox{0.4\textwidth}{!}{
				\begin{tikzpicture}[scale=0.8]  
					\draw[dashed, ->] (5.08, 0) -- (5.08, 10.16);
					\draw[dashed, ->] (0, 5.08) -- (10.16, 5.08);
					
					\filldraw[cyan, opacity=0.2] (3.897, 9.438) -- (5.08, 8.255) -- (6.263, 9.438) -- (6.263, 9.438) arc[start angle=74.814, end angle=105.186, radius=4.516] -- cycle;
					
					\draw[thin] (0.565, 5.158) -- (0.565, 5.158) -- (1.055, 4.995) -- (1.886, 5.236) -- (2.601, 5.072) -- (3.17, 5.13) -- (3.634, 5.005) -- (4.272, 5.285) -- (5.08, 5.08) -- (5.737, 4.993) -- (6.582, 5.159);
					
					\filldraw[fill=red, opacity=0.2] (6.582, 7.417) arc[start angle=-90, end angle=90, x radius=2.258, y radius=-2.258] -- (5.737, 2.735) -- (5.08, 2.822) -- (4.272, 3.027) -- (3.634, 2.747) -- (3.17, 2.872) -- (2.601, 2.814) -- (1.886, 2.979) -- (1.195, 2.778) arc[start angle=149.349, end angle=209.094, x radius=4.516, y radius=-4.516] -- (1.886, 7.494) -- (2.601, 7.33) -- (3.17, 7.388) -- (3.634, 7.262) -- (4.272, 7.542) -- (5.08, 7.338) -- (5.737, 7.251) -- cycle;
					
					\draw[dotted] (5.08, 5.08) circle[radius=4.516];
					
					\node[circle, fill, inner sep=2pt] at (5.08, 8.255) {};
					\node[circle, fill, inner sep=2pt] at (5.08, 5.08) {};
					
					\node[anchor=center, font=\Large] at (8.181, 9.31) {$B_{2\delta}(x_0)$};
					\node[anchor=center, font=\Large] at (5.532, 8.096) {$y_0$};
					\node[anchor=center, font=\Large] at (7.145, 8.121) {$w = \delta$};
					\node[anchor=center, font=\Large] at (10.35, 5.473) {$x_{n-1}$};
					\node[anchor=center, font=\Large] at (5.529, 10.083) {$x_n$};
					\node[anchor=center, font=\Large] at (5.219, 9.17) {$C_\delta(x_0)$};
					\node[anchor=center, font=\Large] at (4.73, 4.808) {$x_0$};
					\node[anchor=center, font=\Large] at (6.483, 5.524) {$\mathcal{S}$};
					\node[anchor=center, font=\Large] at (2.995, 6.268) {$\operatorname{dist}(x,\mathcal{S}) < \delta$};       
				\end{tikzpicture}
			}
			\caption{Construction of the cone $\mathcal{C}_\delta(x_0)$.}
			\label{fig:lemma51}
		\end{figure}
		
		Moreover, since $\Omega$ is a Lipschitz slit domain with constant $L$, the distance from $y_0$ to the slit $\S$ can be estimated by the distance to the circular cone of vertex $x_0$, axis $e_n$ and slope $L$, so
		\begin{equation*}
			\dist(y_0,\mathcal{S}) \geq \frac{\sqrt{2}\delta}{\sqrt{1 + L^2}} \geq \delta,
		\end{equation*}
		where we have also used that $L \leq 1$. Therefore, it follows that $\dist(y, \S) \geq \delta$ for every $y \in \mathcal{C}_\delta(x_0) \cap B_{2\delta}(x_0)$ by a simple geometric argument, noting that the aperture of $\mathcal{C}_\delta(x_0)$ is smaller than the one allowed for $\S$ by the Lipschitz constant $L \leq 1$. This shows, recalling also that $w = \delta$ in $\Omega_\delta \cap B_1$,
		\begin{equation*}
			\mathcal{C}_\delta(x_0) \cap B_{2\delta}(x_0) 
			\, \subset \,
			\Omega_\delta \cap B_{2\delta}(x_0) 
			\, \subset \,
			\{w = \delta\} \cap B_{2\delta}(x_0),
		\end{equation*}
		and hence,
		\begin{equation*}
			\norm{w}_{L^1(B_{2\delta}(x_0))} 
			\geq 
			\delta\abs{\{w = \delta\} \cap B_{2\delta}(x_0)} 
			\geq 
			\delta\abs{\mathcal{C}_\delta(x_0) \cap B_{2\delta}(x_0)} 
			\geq 
			c(n) \, \delta^{n+1}.
		\end{equation*}
		Combining this estimate with \eqref{eq:weak_harnack_almos_pos} we conclude that $w \geq c(n) \, \delta$ in $B_{2\delta}(x_0)$ for some $c(n) > 0$. Noting that $x_0 \in \S \cap B_{1-4\delta}$ was arbitrary, and that all constants are uniform (they only depend on $n$), we deduce that 
		\begin{equation*}
			w \geq c\, \delta \; \text{ in $B_{1-5\delta}$}, 
			\quad 
			\text{or, equivalently, }
			\quad 
			\text{$u \geq -\delta(1-c)$ in $B_{1-5\delta}$}.
		\end{equation*}
		
		At this point we observe that all the hypotheses of the theorem are satisfied in $B_{1-5\delta}$, except for the fact that we have obtained an improvement factor $1-c$ in the condition $u \geq -\delta(1-c)$. Hence, we can run the same arguments of the proof up until this point, but now for the function $w \coloneqq \min\{u,0\} + \delta(1-c)$, thus obtaining that $u \geq -\delta(1-c)^2$ in $B_{1-10\delta}$. We can keep iterating this process to conclude that $u \geq -\delta(1-c)^k$ in $B_{1-5k\delta}$ for $k \geq 1$. Concretely, we prefer to only consider $k \leq \frac{1}{10\delta}$ to ensure that $B_{1/2} \subset B_{1-5k\delta}$ for all our $k$.
	\end{proof}
	
	\begin{remark}
		The result of \cref{lem:almost_pos_sublem} is true for more general domains than Lipschitz slit domains, as can be seen by substituting the strategy of truncating the solution and then applying the weak Harnack inequality for subharmonic functions, by the so-called Bourgain's non-degeneracy estimate for harmonic measure, see e.g. \cite[Lemma 8.16]{PratsTolsa}.
	\end{remark}
	
	The previous lemma shows that the negativity of the solution $u$ in $B_1$ becomes smaller as we look at smaller and smaller scales. As it turns out, this result can be further improved to yield non-negativity of $u$ whenever the parameter $\delta$ is sufficiently small.
	\begin{lemma}\label{lem:amost_pos_int_cond}
		Let $\Omega = \R^n \setminus \mathcal{S}$ be a Lipschitz slit domain as in \cref{def:lip_slit_domain} with Lipschitz constant $L \leq 1$. There exists a small constant $\delta > 0$ depending only on $n$ such that, if $\Omega_\delta \coloneqq \{x \in \Omega : \dist(x,\mathcal{S}) \geq \delta\}$ and $u$ is any function satisfying
		\begin{equation*}
			\left\{
			\begin{aligned}
				\Delta u &= 0 &&\text{in } \Omega \cap B_1, \\
				u &= 0 &&\text{on } \mathcal{S} \cap B_1, \\
			\end{aligned}
			\right.
			\quad\text{and}\quad
			\left\{
			\begin{aligned}
				u &\geq 1 &&\text{in } \Omega_{\delta} \cap B_1, \\
				u &\geq -\delta &&\text{in } B_1,
			\end{aligned}
			\right.
		\end{equation*}
		then $u \geq 0$ in $B_{1/2}$.
	\end{lemma}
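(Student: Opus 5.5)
The plan is to combine the exponential decay of the negative part from \cref{lem:almost_pos_sublem} with the boundary Harnack principle, \cref{th:bdry_harnack} and \cref{rmk:bdry_harnack_neg}. The latter lets a fixed positive part dominate a negative part that is extremely small, at every point near the slit.

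\textbf{Step 1 (shrinking the negative part).} Apply \cref{lem:almost_pos_sublem} with the largest admissible $k = \lfloor 1/(10\delta) \rfloor$. This gives
\begin{equation*}
u \geq -\eps_\delta \quad \text{in } B_{1/2}, \qquad \eps_\delta \coloneqq \delta (1-c)^{\lfloor 1/(10\delta)\rfloor} \leq \delta\, e^{-c'/\delta},
\end{equation*}
with $c' = c'(n) > 0$. In fact we would run the lemma so that the conclusion holds in a slightly larger ball, say $B_{3/4}$: stop the iteration at $k \approx 1/(20\delta)$. The bound is still of the form $\eps_\delta \le \delta e^{-c''/\delta}$. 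The key point is that $\eps_\delta$ is super-polynomially small in $\delta$, while every constant that appears below depends only on $n$, using $L \le 1$.

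\textbf{Step 2 (splitting $u$ at a fixed scale).} Fix $x \in B_{1/2}\cap\Omega$. If $\dist(x,\S)\ge\delta$, then $u(x)\ge 1$ and there is nothing to prove. Otherwise pick $x_0\in\S$ with $|x-x_0|<\delta$, and work in $D \coloneqq \Omega\cap B_{1/8}(x_0) \subset B_{3/4}$. Let $u_\pm$ be the harmonic replacements in $D$ of the positive and negative parts of $u$ on $\partial D$, exactly as in \cref{rmk:bdry_harnack_neg}. Then $u = u_+ - u_-$ in $D$, both functions vanish on $\S\cap B_{1/8}(x_0)$, $0\le u_-\le \eps_\delta$ by Step 1 and the maximum principle, and $u_+ \ge u$. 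The points $p_\pm \coloneqq x_0 \pm \tfrac{1}{16} e_n$ satisfy $\dist(p_\pm,\S)\ge \tfrac{1}{16\sqrt{1+L^2}} > \delta$ by the Lipschitz cone condition, for $\delta$ small. Hence $u_+(p_\pm)\ge u(p_\pm)\ge 1$.

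\textbf{Step 3 (boundary Harnack).} Rescale $B_{1/8}(x_0)$ to $B_1$ and fix a normalized comparison function $v$ as in \cref{th:bdry_harnack}, for instance the harmonic measure-type solution with $v(p_\pm)=1$. \cref{th:bdry_harnack} applied to $u_+$ gives $u_+\ge C^{-1}\min\{u_+(p_+),u_+(p_-)\}\,v \ge C^{-1} v$ in $B_{1/16}(x_0)$. \cref{rmk:bdry_harnack_neg}, or directly \cref{th:bdry_harnack} applied to $u_-\ge0$, gives $u_-\le C\eps_\delta v$ in the same ball. Here $C=C(n)$, because $L\le1$; we take the constant uniform over $L\le1$, since the Lipschitz character only improves. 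Therefore $u = u_+-u_- \ge (C^{-1}-C\eps_\delta)\,v \ge 0$ at $x$ as soon as $\eps_\delta \le C^{-2}$. Since $\eps_\delta\to0$ as $\delta\to0$, this holds for $\delta=\delta(n)$ small. Since $x\in B_{1/16}(x_0)$, this covers every point of $B_{1/2}$ within distance $\delta$ of $\S$, and $u\ge0$ in $B_{1/2}$.

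The main obstacle is the uniformity in Step 3. The boundary Harnack constant must not depend on $\delta$, which is why we apply it at the fixed scale $1/8$ rather than at scale $\delta$. It must also be uniform over all Lipschitz slits with $L\le1$ and over all base points $x_0$, including those whose $\tfrac18$-neighbourhood contains edge points or not. If \cref{th:bdry_harnack} is only stated for balls centered on the edge, I would first handle $x_0$ away from the edge by the classical boundary Harnack principle in Lipschitz domains, applied to each side of $\S$ separately. Near the edge, I would recenter at an edge point within distance $1/16$ and use \cref{rmk:bdry_harnack_one_side_condition} to connect the two sides by a Harnack chain of uniform size.
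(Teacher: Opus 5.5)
Your argument is correct, but it takes a genuinely different route from the paper's. The paper does not use \cref{th:bdry_harnack} here. It first proves the one-step improvement \eqref{eq:almost_pos_claim}: the bound $u\ge a$ on $B_{1/2}\cap\Omega_{\delta/2}$ comes from an interior Harnack chain of radius $\delta/4$ from $x$ to $x+\sqrt2\delta\nu\in\Omega_\delta$, applied to $u+\delta\ge0$, and the bound $u\ge-\delta a$ comes from \cref{lem:almost_pos_sublem}. It then iterates this dyadically around each $x_0\in\S\cap B_{3/4}$, so a point at distance $\approx 2^{-k}\delta$ from $\S$ gets $u\ge a^k>0$.

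You instead split $u=u_+-u_-$ once, at a fixed scale, and let boundary Harnack compare both pieces with a common profile. This replaces the whole multiscale iteration by one application of a much deeper theorem. In your route, Step 1 is actually unnecessary: the hypothesis $u\ge-\delta$ in $B_1$ already gives $0\le u_-\le\delta$, and $\delta\le C^{-2}$ closes the argument, so you never need \cref{lem:almost_pos_sublem}.

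What the paper's route buys is self-containedness and generality. It uses only interior and weak Harnack (cf.\ the remark after \cref{lem:almost_pos_sublem}). It also keeps the natural logical order: in De Silva--Savin's approach, an almost-positivity lemma of exactly this type is essentially the key step in proving boundary Harnack. Your deduction is legitimate here only because \cref{th:bdry_harnack} is imported as a black box.

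Two bookkeeping points in Step 3. First, your fallback removes the need to construct a two-point normalized $v$. Near the edge, take $v=u_+/u_+(p_+)$ via \cref{rmk:bdry_harnack_one_side_condition}, which gives $u_-\le C\delta\,u_+$ directly. Away from the edge, compare $u_-$ with $u_+$ on each side. Second, fix the radii when recentering at an edge point $z_0$ within $1/16$ of $x_0$. Applying \cref{th:bdry_harnack} in $B_{1/8}(z_0)$ only gives a conclusion in $B_{1/16}(z_0)$, which need not contain $x$. Either shrink the recentering threshold or enlarge the ball, e.g.\ work in $B_{1/4}(z_0)\subset B_1$ using $u\ge-\delta$ there.
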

	\begin{proof}
		We begin the proof by first showing the following claim: if $\delta$ is small depending only on $n$, then for some small $a > 0$ we have
		\begin{equation}\label{eq:almost_pos_claim}
			\left\{
			\begin{aligned}
				u &\geq a &&\text{in } B_{1/2} \cap \Omega_{\delta/2}, \\
				u &\geq -\delta a &&\text{in } B_{1/2}.
			\end{aligned}
			\right.
		\end{equation}
		In other words, we want to prove that the negativity of $u$ becomes smaller in $B_{1/2}$ by a factor $a$ in exchange for the bound $u \geq 1$ worsening also by the same factor.
		
		Let us start by proving the first estimate in \eqref{eq:almost_pos_claim}. Given $x \in B_{1/2} \cap \Omega_{\delta/2}$, we may assume $\frac{\delta}{2} \leq \dist(x,\mathcal{S}) < \delta$ since $u \geq 1$ in $\Omega_{\delta}$ and we will take $a < 1$. We now want to connect $x$ through a Harnack chain to a point $y \in \Omega_\delta \cap B_1$. To that end, we set $y \coloneqq x + \sqrt{2}\delta\nu$ with $\nu \in \mathbb{S}^{n-1}$ and distinguish the following two cases:
		\begin{enumerate}
			\item\label{it:case_1} If $x_{n-1} \leq \gamma(x'')$, we simply take $\nu \coloneqq e_n$ if $x_n > \Gamma(x')$ or $\nu \coloneqq -e_n$ otherwise. In this way, similarly to the proof of \cref{lem:almost_pos_sublem}, $\dist(y,\S)$ can be bounded from below by the distance from $y$ to the cone of vertex $(x'',x_{n-1},\Gamma(x'))$, axis $\nu$ and slope $L$. This, combined with the fact that $L \leq 1$, yields
			\begin{equation*}
				\dist(y,\S)
				\geq \frac{|x_n - \Gamma(x')| + \sqrt{2}\delta}{\sqrt{1 + L^2}}
				\geq \frac{\sqrt{2}\delta}{\sqrt{1 + L^2}}
				\geq \delta.
			\end{equation*}
			\item\label{it:case_2} If $x_{n-1} > \gamma(x'')$, then we set $\nu \coloneqq e_{n-1}$. To see that $\dist(y,\S) \geq \delta$, fix any $\bar{x} \in \S$. Then, it holds $\bar{x}_{n-1} \leq \gamma(\bar{x}'')$ which, combined with the fact that $x_{n-1} > \gamma(x'')$ and $L \leq 1$, implies
			\begin{equation} \label{eq:almost_pos_coord_bound}
				\bar{x}_{n-1} - x_{n-1} < \gamma(\bar{x}'') - \gamma(x'') \leq L|\bar{x}'' - x''| \leq |\bar{x}'' - x''|.
			\end{equation}
			Hence,
			\begin{align*}
				|y - \bar{x}|^2
				&\geq
				|y'' - \bar{x}''|^2 + (y_{n-1} - \bar{x}_{n-1})^2
				\\ &=
				|x'' - \bar{x}''|^2 + (x_{n-1} + \sqrt{2}\delta - \bar{x}_{n-1})^2
				\\ &\geq
				\frac{1}{2}\left( |x'' - \bar{x}''| + x_{n-1} + \sqrt{2}\delta - \bar{x}_{n-1} \right)^2
				\\ &\geq
				\frac{1}{2}\left( \sqrt{2}\delta \right)^2 = \delta^2.
			\end{align*}
			Here we have used: for the equality, the definition of $y$; for the second inequality, the fact that $\frac{1}{2}(a + b)^2 \leq a^2 + b^2$ for any $a,b \in \R$; lastly, for the last inequality, the fact that $|\bar{x}'' - x''| + x_{n-1} - \bar{x}_{n-1} \geq 0$ obtained in \eqref{eq:almost_pos_coord_bound}. Given that $\bar{x} \in \S$ was arbitrary, we conclude that $\dist(y,\S) \geq \delta$.
		\end{enumerate}
		Now, as already mentioned above, we may connect $x$ and $y$ with a Harnack chain of balls in $\Omega \cap B_1$ of radius $\delta/4$ so that, applying the interior Harnack inequality to $w \coloneqq u + \delta \geq 0$ along this chain, we find
		\begin{equation*}
			w(x) \geq \frac{1}{C}w(y) \geq \frac{1}{C},
		\end{equation*}
		where we have also used the fact that $u \geq 1$ in $\Omega_\delta$. Thus, we deduce that for $\delta \leq \frac{1}{2C}$,
		\begin{equation*}
			u(x) = w(x) - \delta \geq \frac{1}{C} - \delta \geq \frac{1}{2C}.
		\end{equation*}
		Since $x \in B_{1/2} \cap \Omega_{\delta/2}$ was arbitrary, the first estimate of \eqref{eq:almost_pos_claim} follows by taking $a \coloneqq \frac{1}{2C}$.
		
		As for the second estimate in \eqref{eq:almost_pos_claim}, we apply \cref{lem:almost_pos_sublem} with $k = \left\lfloor\frac{1}{10\delta}\right\rfloor \leq \frac{1}{10\delta}$ to obtain
		\begin{equation*}
			u \geq -\delta(1-c)^k \quad\text{in } B_{1/2},
		\end{equation*}
		noting that $B_{1/2} \subset B_{1-5k\delta}$ by our choice of $k$. Taking $\delta$ small enough (which makes $k$ large) depending on $c$ and $C$ (which only depend on $n$) so that $(1-c)^k \leq \frac{1}{2C} = a$, we conclude that $u \geq -\delta a$ in $B_{1/2}$, which finishes the proof of the second estimate of \eqref{eq:almost_pos_claim}.
		
		With the claim proven, we observe that for any point $x_0  \in \mathcal{S} \cap B_{3/4}$ the hypotheses of the result are satisfied in $B_{1/4}(x_0) \subset B_1$. Thus, iterating \eqref{eq:almost_pos_claim} in $B_{1/4}(x_0)$ at every dyadic scale yields
		\begin{equation} \label{eq:almost_pos_iter}
			\left\{
			\begin{aligned}
				u &\geq a^k &&\text{in } B_{2^{-k-2}}(x_0) \cap \Omega_{2^{-k}\delta}, \\
				u &\geq -\delta a^k &&\text{in } B_{2^{-k-2}}(x_0).
			\end{aligned}
			\right.
		\end{equation}
		On the other hand, given any point $x \in B_{1/2}$ with $d_x \coloneqq \dist(x,\S) < \delta$, we may choose $k$ large enough so that $2^{-k}\delta \leq d_x < 2^{-k+1}\delta$ and therefore it holds $x \in \Omega_{2^{-k}\delta}$. Moreover, if $x_0 \in \S$ is a point such that $d_x = |x - x_0|$, then we can ensure that $x_0 \in B_{3/4}$ by taking $\delta < 1/4$ so that $|x_0| \leq |x| + d_x < 1/2 + \delta < 3/4$. Lastly, we also have that $|x - x_0| = d_x < 2^{-k+1}\delta < 2^{-k-2}$ if we take $\delta < 1/8$. Combining everything, it follows that $x \in B_{2^{-k-2}}(x_0) \cap \Omega_{2^{-k}\delta}$ and, since $x_0 \in \S \cap B_{3/4}$, we can use the first estimate in \eqref{eq:almost_pos_iter} to conclude that $u(x) \geq a^k > 0$. This shows that $u \geq 0$ in $B_{1/2} \setminus \Omega_\delta$ which, combined with the fact that $u \geq 1$ in $B_{1/2} \cap \Omega_\delta$, implies $u \geq 0$ in $B_{1/2}$ and concludes the proof.
	\end{proof}
	
	\begin{corollary}\label{cor:almost_pos}
		Let $\Omega = \R^n \setminus \mathcal{S}$ be a Lipschitz slit domain as in \cref{def:lip_slit_domain} with Lipschitz constant $L \leq 1$ and let $K \subset\subset \Omega \cap B_1$ be a compact set. Then, there exists a constant $\mu_0 > 0$ depending only on $n$ and $K$ (through $\dist(K,\mathcal{S})$ and $\dist(K,\partial B_1)$) such that if $u$ satisfies
		\begin{equation*}
			\left\{
			\begin{aligned}
				\Delta u &= 0 &&\text{in } \Omega \cap B_1, \\
				u &= 0 &&\text{on } \mathcal{S} \cap B_1, \\
				u &\geq -\mu_0 &&\text{in } B_1, \\
				u\left(x_0\right) &= 1 &&\text{for some } x_0 \in K,
			\end{aligned}
			\right.
		\end{equation*}
		then $u \geq 0$ in $B_{1/2}$.
	\end{corollary}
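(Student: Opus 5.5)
The plan is to reduce \cref{cor:almost_pos} to \cref{lem:amost_pos_int_cond} by showing that the single normalization $u(x_0)=1$ at a point of $K$, together with the small negativity bound $u\geq-\mu_0$, forces $u\geq 1$ on the whole set $\Omega_\delta\cap B_{1}$ after a harmless rescaling. Let $\delta=\delta(n)$ be the constant from \cref{lem:amost_pos_int_cond}. We work with $w\coloneqq u+\mu_0$, which is non-negative and harmonic in $\Omega\cap B_1$, so interior Harnack inequalities apply to it.

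\textbf{Step 1 (Harnack chain from $x_0$).} Fix an intermediate radius, say $3/4$, and consider the set $\Omega_{\delta/4}\cap B_{3/4}$ together with $K$. Since $K$ has positive distance $d_K$ to $\S$ and to $\partial B_1$, and since Lipschitz slit domains with $L\le1$ are uniformly connected away from $\S$ (any point at distance $\ge \delta/4$ from $\S$ can be joined to any other such point, or to $K$, by a chain of balls of radius comparable to $\min\{\delta,d_K\}$ staying inside $\Omega\cap B_{7/8}$, whose number is bounded in terms of $n$, $\delta$ and $d_K$), we obtain $w(x)\geq c_1 w(x_0)=c_1(1+\mu_0)\geq c_1$ for all $x\in\Omega_{\delta/4}\cap B_{3/4}$, where $c_1>0$ depends only on $n$ and $K$. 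The construction of the chain is where some care is needed: one goes around the edge of the slit (e.g.\ moving in the $e_{n-1}$ direction past $\gamma$, as in case \eqref{it:case_2} of the proof of \cref{lem:amost_pos_int_cond}) to connect the regions above and below $\S$; the Lipschitz bound $L\le 1$ ensures the cone arguments used in \cref{lem:almost_pos_sublem} keep the chain at distance $\gtrsim\delta$ from $\S$. This is the main (though routine) obstacle; note $\mu_0$ does not enter $c_1$.

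\textbf{Step 2 (choice of $\mu_0$ and rescaling).} Then $u\geq c_1-\mu_0\geq c_1/2$ on $\Omega_{\delta/4}\cap B_{3/4}$ if $\mu_0\le c_1/2$. Now apply \cref{lem:amost_pos_int_cond} to $v(x)\coloneqq \frac{2}{c_1}u(x_1+\tfrac14 x)$ for each center $x_1\in B_{1/2}$: the rescaled domain is again a Lipschitz slit domain with constant $L\le1$, $v$ is harmonic and vanishes on the slit, $v\ge1$ where the rescaled distance to the slit is $\ge\delta$ (which corresponds to original distance $\ge\delta/4$ inside $B_{3/4}$), and $v\geq-2\mu_0/c_1\geq-\delta$ provided $\mu_0\le c_1\delta/2$. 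Hence $v\ge0$ in $B_{1/2}$, i.e.\ $u\ge0$ in $B_{1/8}(x_1)$; covering $B_{1/2}$ by such balls gives $u\geq0$ in $B_{1/2}$. Taking $\mu_0\coloneqq c_1\delta/2$, which depends only on $n$ and on $K$ through $\dist(K,\S)$ and $\dist(K,\partial B_1)$, finishes the argument.
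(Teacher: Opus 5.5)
Your proposal is correct and follows essentially the paper's route: a Harnack chain for $u+\mu_0\geq 0$ from $x_0$ gives a uniform positive lower bound on the region at distance $\gtrsim\delta$ from $\S$, and a normalization (you use $2/c_1$ where the paper uses $\mu_0^{-1/2}$) then puts $u$ under the hypotheses of \cref{lem:amost_pos_int_cond}. Your explicit rescaling by $4$ around each $x_1\in B_{1/2}$, with the $\delta/4$-neighbourhood in the original scale, is a clean way to carry out the ``simple covering argument'' the paper leaves implicit. One small correction: the chains should be taken inside $\Omega\cap B_{1-\dist(K,\partial B_1)/2}$ rather than $B_{7/8}$, since $K$ need not lie in $B_{7/8}$.
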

	\begin{proof}
		Let $\delta > 0$ be small from \cref{lem:amost_pos_int_cond}, and define $\Omega_\delta \coloneqq \{x \in \Omega : \dist(x,\S) > \delta\}$. Observe that $x_0 \in \Omega_\delta \cap B_{1-\delta}$ if $\delta$ is small, depending on $K$ (in fact, it is enough to have $\delta < \min \{\dist(K,\S), \dist(K,\partial B_1)\}$). Thus, by the interior Harnack applied to $v \coloneqq u + \mu_0$, which is harmonic in $\Omega \cap B_1$ and non-negative in $B_1$ by hypothesis, we have
		\begin{equation*}
			v \geq \frac{1}{C}v(x_0) = \frac{1}{C}(u(x_0) + \mu_0) = \frac{1}{C}(1 + \mu_0) \quad\text{in } \Omega_\delta \cap B_{1-\delta},
		\end{equation*}
		where the constant $C$ depends only on $n$ and $\delta$. Here we are making use of the good connectivity properties of $\Omega_\delta \cap B_{1-\delta}$ for $\delta > 0$ small\footnote{
			The good connectivity properties of $\Omega_\delta \cap B_{1-\delta}$, for small $\delta$, stem from the fact that the boundary of $\Omega$ (and hence $\Omega_\delta$) is Lipschitz with small constant, and does not disconnect $B_1$ (because the origin belongs to the edge, and the slit $\S = \partial \Omega$ does not continue for positive values of $x_{n-1}$). More rigorously, we may use \cite[Lemma 3.6]{ClaraBoundaryHarnack}. Indeed, given any point $x \in \Omega_\delta \cap B_{1-\delta}$, \cite[Lemma 3.6]{ClaraBoundaryHarnack} implies that we can compare $v(x)$ with either $v(\frac{e_n}{2})$, $v(-\frac{e_n}{2})$ or $v(\frac{e_{n-1}}{2})$ through a Harnack chain whose constant depends only on $n$ and $\delta$. Since the values $v(\frac{e_n}{2})$, $v(-\frac{e_n}{2})$ and $v(\frac{e_{n-1}}{2})$ are also all comparable, it follows that $v(x) \leq Cv(y)$ for every $x,y \in \Omega_\delta \cap B_{1-\delta}$ and for some constant $C$ depending only on $n$ and $\delta$.    
		}.
		
		In particular, taking $\mu_0 \in (0, 1)$ sufficiently small depending on $C$ we have
		\begin{equation*}
			v 
			\geq 
			\frac{1}{C}
			\geq 
			2 \sqrt{\mu_0}
			\geq 
			\sqrt{\mu_0} + \mu_0 \quad\text{in } \Omega_{\delta} \cap B_{1-\delta}.
		\end{equation*}
		Thus, recalling also the hypotheses of this result, we see that $\widetilde{u} \coloneqq \mu_0^{-1/2}u$ is a harmonic function in $\Omega \cap B_1$ which vanishes on $\S \cap B_1$ and satisfies
		\begin{equation*}
			\left\{
			\begin{aligned}
				\widetilde{u} &\geq 1 &&\text{in } \Omega_{\delta} \cap B_{1-\delta}, \\
				\widetilde{u} &\geq -\sqrt{\mu_0} &&\text{in } B_1.
			\end{aligned}
			\right.
		\end{equation*}
		Consequently, choosing $\mu_0$ appropriately small, we can apply \cref{lem:amost_pos_int_cond} (along with a simple covering argument) to conclude that $u \geq 0$ in $B_{1/2}$.
	\end{proof}
	
	
	\subsection{Comparison of barriers at different scales} \label{sec:comparison_barriers}
	
	On top of the almost positivity property, we will also need the following auxiliary results concerning the comparison of the barriers defined in \cref{cor:equiv_prop3.3_from_clara}.
	
	\begin{lemma} \label{lem:v-w_L_infty_bound}
		Let $\alpha_0 < \pi$ and $\beta_0 < \frac{\pi}{2}$ be fixed constants and let $\alpha$ and $ \beta$ be angles such that $|\alpha| \leq \alpha_0$ and $|\beta| \leq \beta_0$. Let $\Omega$ be a Lipschitz slit domain as in \cref{def:lip_slit_domain} with Lipschitz constant $L \coloneqq L(\alpha_0, \beta_0)$ small enough, depending on $\alpha_0$ and $\beta_0$. Define $h$ as in \cref{th:barriers} and let $e = e(\alpha, \beta) \coloneqq (e'' \sin \beta, \cos{\alpha} \cos{\beta}, \sin{\alpha} \cos{\beta})$ be a unit vector (with $e'' \in \R^{n-2}$ being also a unit vector). For any $k \geq 0$, let
		\begin{equation*}
			\varphi_k \coloneqq \varphi_{2^{-k}} 
			\quad \text{and} \quad 
			\eps_k \coloneqq C(\norm{\nabla\gamma}_{L^\infty(B''_{10\cdot2^{-k}})} + \norm{\nabla\Gamma}_{L^\infty(B'_{10\cdot2^{-k}})})
		\end{equation*}
		be defined as in \cref{cor:equiv_prop3.3_from_clara}, i.e. $\varphi_k$ are solutions to
		\begin{equation*}
			\left\{
			\begin{aligned}
				\Delta \varphi_k  &= 0 &&\text{in } \Omega \cap B_{2^{-k}}, \\
				\varphi_k  &= 0 &&\text{on } \partial\Omega \cap B_{2^{-k}}.
			\end{aligned}
			\right.
		\end{equation*}
		For $k \geq 1$, define $v_k$ and $w_k$ as
		\begin{equation*}
			v_k(x) \coloneqq \frac{\varphi_{k-1} (2^{-k}x)}{\varphi_{k-1}(2^{-k-1}e)} \quad \text{and} \quad
			w_k(x) \coloneqq \frac{\varphi_k(2^{-k}x)}{\varphi_k(2^{-k-1}e)},
		\end{equation*}
		with $x \in \widetilde{\Omega}$ and $\widetilde{\Omega} \coloneqq 2^k\Omega = \{2^k x : x \in \Omega\}$. Then, if $\eps_{k-1}$ is sufficiently small, it holds
		\begin{equation*}
			\norm{v_k - w_k}_{L^\infty(\widetilde{\Omega} \cap B_1)} \leq M \eps_{k-1},
		\end{equation*}
		where $M$ is a constant depending only on $n$, $\alpha_0$ and $\beta_0$.
	\end{lemma}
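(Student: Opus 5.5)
The plan is to compare both $\varphi_{k-1}$ and $\varphi_k$ with the single function $h$ from \cref{th:barriers}, which does not depend on the scale, and then to control the normalizing constants.

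\textbf{Step 1: approximation by $h$.} By \cref{cor:equiv_prop3.3_from_clara} with $r = 2^{-k+1}$ and $r = 2^{-k}$ we have
\begin{equation*}
\norm{\varphi_{k-1} - h}_{L^\infty(\Omega\cap B_{2^{-k+1}})} \leq 15\sqrt{2^{-k+1}}\,\eps_{k-1}, \qquad \norm{\varphi_k - h}_{L^\infty(\Omega\cap B_{2^{-k}})} \leq 15\sqrt{2^{-k}}\,\eps_k .
\end{equation*}
Since the balls $B_{10\cdot 2^{-k}}$ are nested, $\eps_k \leq \eps_{k-1}$. Hence on $\Omega \cap B_{2^{-k}}$ both $\varphi_{k-1}$ and $\varphi_k$ lie within $C\,2^{-k/2}\eps_{k-1}$ of $h$. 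In particular,
\begin{equation*}
\abs{\varphi_{k-1} - \varphi_k} \leq C\,2^{-k/2}\eps_{k-1} \quad \text{in } \Omega\cap B_{2^{-k}} .
\end{equation*}

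\textbf{Step 2: the normalizing constants.} Set $y_k \coloneqq 2^{-k-1}e$. By \cref{lem:h_comparable_sqrt_r} with $r = 2^{-k-1}$ (taking $L$ small depending on $\alpha_0,\beta_0$), we get $h(y_k) \geq \kappa\, 2^{-(k+1)/2}$ and $h(y_k) \leq 3\cdot 2^{-(k+1)/2}$. Combined with Step 1, this gives $A \coloneqq \varphi_{k-1}(y_k)$ and $B \coloneqq \varphi_k(y_k)$ both in $[\tfrac{\kappa}{2}2^{-(k+1)/2},\, 4\cdot 2^{-(k+1)/2}]$, provided $\eps_{k-1}$ is small depending on $\kappa$. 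This is where the smallness assumption on $\eps_{k-1}$ enters. Moreover,
\begin{equation*}
\abs{A - B} \leq C\,2^{-k/2}\eps_{k-1}.
\end{equation*}

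\textbf{Step 3: conclusion.} For $x \in \widetilde\Omega\cap B_1$, write $z = 2^{-k}x \in \Omega\cap B_{2^{-k}}$. Then
\begin{equation*}
v_k(x) - w_k(x) = \frac{\varphi_{k-1}(z) - \varphi_k(z)}{A} + \varphi_k(z)\,\frac{B - A}{AB}.
\end{equation*}
For the first term, Step 1 and the lower bound on $A$ give a bound $C\kappa^{-1}\eps_{k-1}$. For the second term, the bound $\varphi_k \leq (3\sqrt{2^{-k}})^{\eps_k} h^{1-\eps_k} \leq 3\sqrt{2^{-k}}$ from \eqref{eq:varphi_sandwich} and \eqref{eq:bound_h}, together with Step 2, gives $C\kappa^{-2}\eps_{k-1}$. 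Therefore
\begin{equation*}
\norm{v_k - w_k}_{L^\infty(\widetilde\Omega\cap B_1)} \leq M\eps_{k-1}, \qquad M = M(n,\kappa) = M(n,\alpha_0,\beta_0).
\end{equation*}

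The main obstacle is the lower bound on the normalizers in Step 2. It needs $y_k$ to lie in the cone where \cref{lem:h_comparable_sqrt_r} applies, and it needs the error $C 2^{-k/2}\eps_{k-1}$ to be absorbed by $\kappa 2^{-(k+1)/2}$, which is why $\eps_{k-1}$ must be small. Everything else is a routine consequence of \cref{cor:equiv_prop3.3_from_clara}.
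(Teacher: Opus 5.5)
Your proposal is correct and follows essentially the same route as the paper. Both proofs compare $\varphi_{k-1}$ and $\varphi_k$ with $h$ via \cref{cor:equiv_prop3.3_from_clara} (using $\eps_k \leq \eps_{k-1}$), bound the normalizers at $2^{-k-1}e$ from below through \cref{lem:h_comparable_sqrt_r} with $\eps_{k-1} \ll \kappa$, and split the quotient difference algebraically. The only difference is cosmetic: you bound $\varphi_k \leq 3\sqrt{2^{-k}}$ directly from \eqref{eq:varphi_sandwich}, whereas the paper bounds $\varphi_{k-1}$ by $\norm{h}_{L^\infty}$ plus the approximation error.
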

	\begin{proof}
		From \cref{cor:equiv_prop3.3_from_clara} and the fact that the sequence $\eps_k$ is monotone we have the following estimates:
		\begin{align*}
			\norm{\varphi_k - h}_{L^\infty(B_{2^{-k}})} &\leq 15 \cdot 2^{-\frac{k}{2}} \eps_k \leq 15 \cdot 2^{-\frac{k-1}{2}} \eps_{k-1},
			\\
			\norm{\varphi_{k-1} - h}_{L^\infty(B_{2^{-k}})} &\leq \norm{\varphi_{k-1} - h}_{L^\infty(B_{2^{-k+1}})} \leq 15 \cdot 2^{-\frac{k-1}{2}} \eps_{k-1}.    \end{align*}
		Using the above estimates we get the bound
		\begin{align*}
			\norm{v_k - w_k}_{L^\infty(\widetilde{\Omega} \cap B_1)}
			&\leq
			\abs{\frac{1}{\varphi_{k-1}(2^{-k-1}e)} - \frac{1}{\varphi_k(2^{-k-1}e)}} \norm{\varphi_{k-1}}_{L^\infty(B_{2^{-k}})}
			\\ &\quad+
			\frac{1}{\varphi_k(2^{-k-1}e)} \norm{\varphi_{k-1} - \varphi_k}_{L^\infty(B_{2^{-k}})}
			\\ &\leq
			\frac{\norm{\varphi_{k-1} - \varphi_k}_{L^\infty(B_{2^{-k}})}}{\varphi_{k-1}(2^{-k-1}e) \varphi_k(2^{-k-1}e)} \norm{\varphi_{k-1}}_{L^\infty(B_{2^{-k}})}
			\\ &\quad+
			\frac{1}{\varphi_k(2^{-k-1}e)} \norm{\varphi_{k-1} - \varphi_k}_{L^\infty(B_{2^{-k}})} \\ &\leq
			\frac{30 \cdot 2^{-\frac{k-1}{2}} \eps_{k-1}}{ \left( h(2^{-k-1}e) - 15 \cdot 2^{-\frac{k-1}{2}} \eps_{k-1} \right)^2} \left(\norm{h}_{L^\infty(B_{2^{-k}})} +  15 \cdot 2^{-\frac{k-1}{2}} \eps_{k-1} \right)
			\\ &\quad+
			\frac{30 \cdot 2^{-\frac{k-1}{2}} \eps_{k-1}}{h(2^{-k-1}e) - 15 \cdot 2^{-\frac{k-1}{2}} \eps_{k-1}},
		\end{align*}
		where in the first step we have used the elementary estimate $|\frac{a}{b} - \frac{c}{d}| \leq |\frac{1}{b} - \frac{1}{d}|\cdot|a| + \frac{1}{|d|}\cdot|a - c|$. Now, using that $\kappa \sqrt{r} \leq h(re)$ and $\norm{h}_{L^\infty(B_r)} \leq 3\sqrt{r}$ from \cref{lem:h_comparable_sqrt_r}, we continue the estimate by
		\begin{align*}
			\norm{v_k - w_k}_{L^\infty(\widetilde{\Omega} \cap B_1)} 
			& \leq
			\frac{30 \cdot 2^{-\frac{k-1}{2}} \eps_{k-1}}{ \left( \kappa 2^{-\frac{k+1}{2}} - 15 \cdot 2^{-\frac{k-1}{2}} \eps_{k-1} \right)^2} \left(3 \cdot 2^{-\frac{k}{2}} + 15 \cdot 2^{-\frac{k-1}{2}} \eps_{k-1} \right)
			\\ &\quad+
			\frac{30 \cdot 2^{-\frac{k-1}{2}} \eps_{k-1}}{\kappa 2^{-\frac{k+1}{2}} - 15 \cdot 2^{-\frac{k-1}{2}} \eps_{k-1}}
			\\ & \leq 
			\frac{C \cdot 2^{-k} \varepsilon_{k-1}}{\left(\frac{\kappa}{10} 2^{-\frac{k}{2}}\right)^2}
			+ \frac{C \cdot 2^{-\frac{k}{2}} \varepsilon_{k-1}}{\frac{\kappa}{10} 2^{-\frac{k}{2}}} 
			\\ & \leq 
			C(\kappa) \, \varepsilon_{k-1},
		\end{align*}
		where we have used that by definition of $\varepsilon_{k-1}$, it holds $\varepsilon_{k-1} \leq CL \ll \kappa$ if $L$ is sufficiently small, which has allowed us to estimate the denominators.
	\end{proof} 
	
	\begin{lemma} \label{lem:varphi_k-1_comp_varphi_k}
		Under the conditions of \cref{lem:v-w_L_infty_bound}, it holds 
		\begin{equation*}
			(1-A\varepsilon_{k-1}) \, \varphi_k \leq \varphi_{k-1} \leq (1 + A\eps_{k-1}) \, \varphi_k
			\quad \text{in } B_{2^{-k-1}},
		\end{equation*}
		where $A > 0$ depends only on $n, \alpha_0, \beta_0$.
	\end{lemma}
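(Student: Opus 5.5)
The plan is to deduce the multiplicative comparison from the additive bound of \cref{lem:v-w_L_infty_bound}, using the boundary Harnack principle in the form of \cref{rmk:bdry_harnack_neg}. We work in the rescaled domain $\widetilde{\Omega} = 2^k\Omega$, which is again a Lipschitz slit domain with the same constant $L$, since the Lipschitz constant is scale invariant. The function $v_k - w_k$ is harmonic in $\widetilde{\Omega}\cap B_1$ and vanishes on $\widetilde{\S}\cap B_1$, because both $\varphi_{k-1}$ and $\varphi_k$ vanish on $\partial\Omega\cap B_{2^{-k}}$. By \cref{lem:v-w_L_infty_bound}, $\norm{v_k-w_k}_{L^\infty(\widetilde\Omega\cap B_1)} \leq M\eps_{k-1}$.

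First, we check that $w_k$ is an admissible comparison function in \cref{th:bdry_harnack}, up to normalization. By \eqref{eq:varphi_sandwich} it is non-negative and vanishes continuously on the slit. We also need $w_k(\pm \frac{e_n}{2}) \approx 1$ with constants depending only on $n,\alpha_0,\beta_0$.

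To get this, note that $\pm\frac{e_n}{2}$ correspond to $e(\pm\pi/2,0)$. Hence \cref{lem:h_comparable_sqrt_r}, applied with $\alpha_0$ replaced by $\max\{\alpha_0,\pi/2\}$ (at the cost of shrinking $L$), gives $h(2^{-k-1}(\pm e_n)) \approx 2^{-k/2}$. The same lemma gives $h(2^{-k-1}e)\approx 2^{-k/2}$. Combined with $\norm{\varphi_k - h}_{L^\infty(B_{2^{-k}})}\leq 15\cdot 2^{-k/2}\eps_k$ from \cref{cor:equiv_prop3.3_from_clara} and $\eps_k \leq CL \ll \kappa$, this yields $c\le w_k(\pm\frac{e_n}{2})\le C$. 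Dividing $w_k$ by the appropriate constant, \cref{rmk:bdry_harnack_neg} applied to $u = v_k - w_k$ gives
\begin{equation*}
|v_k - w_k| \leq C M\eps_{k-1}\, w_k \quad\text{in } \widetilde\Omega\cap B_{1/2},
\end{equation*}
where $C$ depends only on $n$, $L$ and the normalization constants. Applying the remark to both $\pm(v_k-w_k)$ gives the two-sided bound. Thus $(1-C\eps_{k-1})w_k\le v_k\le(1+C\eps_{k-1})w_k$ in $\widetilde\Omega \cap B_{1/2}$.

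Scaling back, this is a comparison between $\varphi_{k-1}$ and $\varphi_k$ in $B_{2^{-k-1}}$, with the extra factor
\begin{equation*}
\lambda \coloneqq \frac{\varphi_{k-1}(2^{-k-1}e)}{\varphi_k(2^{-k-1}e)}.
\end{equation*}
It remains to show $|\lambda - 1|\le C\eps_{k-1}$. Both values are within $15\cdot 2^{-(k-1)/2}\eps_{k-1}$ of $h(2^{-k-1}e)\geq \kappa 2^{-(k+1)/2}$. Hence $\lambda = 1+O(\eps_{k-1}/\kappa)$, exactly as in the denominator estimates in the proof of \cref{lem:v-w_L_infty_bound}. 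Multiplying the factors $(1\pm C\eps_{k-1})(1\pm C\eps_{k-1})$ and using that $\eps_{k-1}$ is small gives the claim with $A = A(n,\alpha_0,\beta_0)$.

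The main obstacle is the normalization step, namely controlling $w_k$ at $\pm\frac{e_n}{2}$ uniformly from above and below. If enlarging $\alpha_0$ to cover these points is not acceptable, I would instead normalize only at the point $\frac{1}{2}e$ and invoke \cref{rmk:bdry_harnack_one_side_condition}. This requires a Harnack chain of fixed radius in $\widetilde{\Omega}$ connecting $\frac12 e$ to $\pm\frac{e_n}{2}$, which \cref{lem:spherical_distance_boundary} provides for small $L$.
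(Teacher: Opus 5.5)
Your argument is correct, but at the key step it takes a different route from the paper. The paper never invokes boundary Harnack in this lemma. Instead it forms $\psi_k = \bigl(v_k - (1-\mu_0^{-1}M\eps_{k-1})w_k\bigr)/(\mu_0^{-1}M\eps_{k-1})$, which is harmonic, vanishes on the slit, and satisfies $\psi_k \geq w_k - \mu_0 \geq -\mu_0$ by \cref{lem:v-w_L_infty_bound}, together with $\psi_k(\frac{e}{2}) = 1$. It then applies the almost positivity result \cref{cor:almost_pos}, with $K$ the set of points $\frac12 e(\alpha,\beta)$, to get $\psi_k \geq 0$ in $B_{1/2}$, i.e.\ $v_k \geq (1-\mu_0^{-1}M\eps_{k-1})w_k$. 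The upper bound follows symmetrically, and the control of the ratio $\lambda$ is the same as yours.

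You instead apply \cref{rmk:bdry_harnack_neg} to $\pm(v_k - w_k)$, using $w_k$ as the positive reference solution, and obtain both inequalities at once.

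The paper's route has two advantages. First, \cref{cor:almost_pos} rests only on the weak and interior Harnack inequalities (via \cref{lem:almost_pos_sublem,lem:amost_pos_int_cond}). Second, its normalization point $\frac{e}{2}$ is exactly where $v_k, w_k$ are normalized, so no information about $w_k$ at other points is needed.

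Your route is shorter and makes the almost-positivity subsection unnecessary for this lemma. The price is relying on the deeper boundary Harnack principle \cref{th:bdry_harnack}, which the paper uses anyway in the proofs of the main theorems. You also need the extra check that $w_k(\pm\frac{e_n}{2}) \approx 1$, which you correctly obtain from \cref{lem:h_comparable_sqrt_r} at $\alpha = \pm\pi/2$.

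One small point of phrasing: no single multiplicative constant can make both $w_k(\frac{e_n}{2})$ and $w_k(-\frac{e_n}{2})$ equal to $1$. The clean formulation is the one in your last paragraph: normalize only at $\frac{e_n}{2}$ and use \cref{rmk:bdry_harnack_one_side_condition}. Equivalently, note that the two values are comparable via a Harnack chain around the edge.
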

	\begin{proof}  
		Let us first prove the lower bound. Consider, for $\mu_0$ small from \cref{cor:almost_pos} (which is independent of $k$),
		\begin{equation*}
			\psi_k 
			\coloneqq 
			\frac{v_k - (1 - \mu_0^{-1}M\eps_{k-1}) \, w_k}{\mu_0^{-1}M\eps_{k-1}},
		\end{equation*}
		where $v_k, w_k, M$ are as in \cref{lem:v-w_L_infty_bound}. If we set $\widetilde{\Omega} \coloneqq 2^k\Omega = \{2^k x : x \in \Omega\}$, then as a consequence of \cref{lem:v-w_L_infty_bound} it holds $v_k - w_k \geq -M\eps_{k-1}$ in $\widetilde{\Omega} \cap B_1$, and therefore
		\begin{align*}
			\psi_k &= \frac{\mu_0^{-1}M\eps_{k-1}w_k + v_k - w_k}{\mu_0^{-1}M\eps_{k-1}}
			\geq \frac{\mu_0^{-1}M\eps_{k-1}w_k - M\eps_{k-1}}{\mu_0^{-1}M\eps_{k-1}}
			= w_k - \mu_0
			\geq -\mu_0
			\quad\text{in } \widetilde{\Omega} \cap B_1,
		\end{align*}
		where we have used the fact that $w_k$ is non-negative for the last inequality (because so are all the $\varphi_k$, see \cref{cor:equiv_prop3.3_from_clara,lem:h_comparable_sqrt_r}). In addition, the harmonicity of $\varphi_{k-1}$ and $\varphi_k$ in their respective domains implies that both $v_k$ and $w_k$ are harmonic in the rescaled domain $\widetilde{\Omega} \cap B_1$. Hence, we see that the functions $\psi_k$ satisfy
		\begin{equation*}
			\left\{
			\begin{aligned}
				\Delta \psi_k &= 0 &&\text{in } \widetilde{\Omega} \cap B_1, \\
				\psi_k &\geq -\mu_0 &&\text{in } \widetilde{\Omega} \cap B_1, \\
				\psi_k &= 0 &&\text{on } \partial \widetilde{\Omega} \cap B_1, \\
				\psi_k\left(\textstyle\frac{e}{2}\right) &= 1,
			\end{aligned}
			\right.
		\end{equation*}
		for every $k$, where the last two conditions are a consequence of the definition of $v_k$ and $w_k$. Consequently, we may apply \cref{cor:almost_pos} with $K \coloneqq \{\frac{1}{2}e(\alpha,\beta) : \abs{\alpha} \leq \alpha_0,\ \abs{\beta} \leq \beta_0\}$, $e(\alpha, \beta)$ as in \cref{lem:v-w_L_infty_bound} and $x_0 \coloneqq \frac{e}{2}$ to deduce that $\psi_k \geq 0$ in $\widetilde{\Omega} \cap B_{1/2}$ for $\mu_0$ sufficiently small depending only on $n$, $\alpha_0$ and $\beta_0$. Equivalently, 
		\begin{equation*}
			v_k \geq (1 - \mu_0^{-1}M\eps_{k-1})w_k 
			\quad 
			\text{in $\widetilde{\Omega} \cap B_{1/2}$ }
		\end{equation*}
		and therefore, by the definitions of $v_k, w_k$ (see \cref{{lem:v-w_L_infty_bound}}),
		\begin{equation} \label{eq:varphi_k-1_geq}
			\varphi_{k-1} 
			\geq
			(1 - \mu_0^{-1}M\eps_{k-1})\frac{\varphi_{k-1}(2^{-k-1}e)}{\varphi_k(2^{-k-1}e)} \varphi_k
			\quad \text{in } \Omega \cap B_{2^{-k-1}}.
		\end{equation}
		
		On the other hand, \cref{cor:equiv_prop3.3_from_clara,lem:h_comparable_sqrt_r} imply
		\begin{equation}\label{eq:varphi_quotient_lower_bound}
			\frac{\varphi_{k-1}(2^{-k-1}e)}{\varphi_k(2^{-k-1}e)}
			\geq 
			\frac{h(2^{-k-1}e) - 15 \cdot 2^{-\frac{k-1}{2}}\eps_{k-1}}{h(2^{-k-1}e) + 15 \cdot 2^{-\frac{k-1}{2}}\eps_{k-1}}
			\geq 
			1 - \frac{30 \eps_{k-1}}{\kappa + 15\eps_{k-1}}
			\geq 
			1 - C \eps_{k-1},
		\end{equation}
		where $C = C(\kappa) = C(\alpha_0, \beta_0) > 0$ by \cref{lem:h_comparable_sqrt_r}. Thus, combining \eqref{eq:varphi_k-1_geq} with \eqref{eq:varphi_quotient_lower_bound} yields
		\begin{equation*}
			\varphi_{k-1} 
			\geq 
			(1 - \mu_0^{-1}M\eps_{k-1}) (1 - C \eps_{k-1}) \, \varphi_k 
			\geq 
			(1 - A\eps_{k-1})\, \varphi_k
			\quad\text{in } \Omega \cap B_{2^{-k-1}},
		\end{equation*}
		if $A > 0$ is chosen large enough.
		
		The proof of the upper bound follows by a completely analogous argument, but this time using the functions
		\begin{equation*}
			\psi_k \coloneqq \frac{(1 + \mu_0^{-1}M \eps_{k-1})w_k - v_k}{\mu_0^{-1}M \eps_{k-1}}.
		\end{equation*}
	\end{proof}
	
	\begin{remark}\label{rmk:const_A_depend}
		If $e$ is replaced with $e_{n-1}$ in the proof of \cref{lem:varphi_k-1_comp_varphi_k}, then the constant $A$ becomes independent of $\alpha_0$ and $\beta_0$. Indeed, using $e_{n-1}$ instead of $e$ corresponds to taking $\alpha_0 = \beta_0 = 0$ and $e = e(0,0) = e_{n-1}$ (see \cref{lem:v-w_L_infty_bound} for the definition of $e$) which, in turn, makes the constants $\kappa$, $M$ and $\mu_0$ from \cref{lem:v-w_L_infty_bound,cor:almost_pos,lem:h_comparable_sqrt_r} all independent of $\alpha_0$ and $\beta_0$.
	\end{remark}


	\subsection{Proof of \cref{th:lower_bound_thm}} 
	\label{sec:proof_lower_bound_thm}
	
	Fix $\omega_0 > 0$ small enough to be determined later, depending only on $n$, $\alpha_0$ and $\beta_0$. By definition of modulus of continuity, we can find $j_0 \in \NN$ such that $\omega(s) < \omega_0$ for all $s \in [0, 2^{-j_0+5}]$. We will show that \cref{th:lower_bound_thm} holds with $r_0 \coloneqq 2^{-j_0}$. For that purpose, fix $0 < \rho < r < r_0$, and let $j_1, j_2 \in \NN$ be indices such that 
	\begin{equation*}
		r \in (2^{-j_1-1}, 2^{-j_1}]
		\quad \text{and} \quad
		\rho \in (2^{-j_2-1}, 2^{-j_2}],
	\end{equation*}
	so that it holds $j_0 \leq j_1 \leq j_2$. Observe that if $j_1 = j_2$, then the scales $\rho$ and $r$ are comparable (specifically $ r/2 < \rho < r$) and the theorem is a consequence of the interior Harnack inequality. Thus, throughout the rest of the proof we may assume $j_1 < j_2$.
	
	For $k \geq j_0$, consider the sequences $\varphi_k$ and $\varepsilon_k$ as defined in \cref{lem:v-w_L_infty_bound}. We apply the boundary Harnack inequality from \cref{th:bdry_harnack} to $u$ and $\varphi_{j_1}$ (rescaled to $B_{2^{-j_1}}$)\footnote{
		More specifically, we apply \cref{th:bdry_harnack} to $\widehat{u} \coloneqq u(2^{-j_1}\cdot)$ and $\widehat{\varphi}_{j_1} \coloneqq \varphi_{j_1}(2^{-j_1}\cdot) / \varphi_{j_1}(2^{-j_1-1}e_n)$, where we note that the normalization clearly yields $\widehat{\varphi}_{j_1}(e_n/2)=1$, so \cref{rmk:bdry_harnack_one_side_condition} allows to apply \cref{th:bdry_harnack}.
	}
	\begin{equation}
		\label{eq:bdry_harnack_at_2-j1-1}
		\frac{u}{\varphi_{j_1}} \geq \frac{1}{C}\, \frac{\min\{u(2^{-j_1-1}e_n),\, u(-2^{-j_1-1}e_n)\}}{\varphi_{j_1}(2^{-j_1-1}e_n)}
		\quad\text{in } \Omega \cap B_{2^{-j_1-1}}.
	\end{equation}
	Moreover, as a consequence of our choice of $e$ in \cref{th:lower_bound_thm}, it holds (see \cref{lem:spherical_distance_boundary}) $\dist(re, \S) \geq c_0 r$ for some $c_0 \coloneqq c_0(n, \alpha_0, \beta_0) > 0$ whenever $\omega_0$ is small enough. Then, it is easy to connect $re$ to $\pm 2^{-j_1-1}e_n$ by a Harnack chain of overlapping balls inside $\Omega \cap B_{2^{-j_1}}$, where the number of balls depends only on $n$, $\alpha_0$ and $\beta_0$,\footnote{
		Indeed, to connect these points, one may forget about the slit domain $\Omega$, and just move inside the conical regions defined in \cref{lem:spherical_distance_boundary}, which are clearly Harnack chain connected.
	} so that by repeated application of interior Harnack we get
	\begin{equation} \label{eq:harnack_chain_bound}
		\min\left\{u(2^{-j_1-1}e_n), u(-2^{-j_1-1}e_n)\right\} \geq C^{-1} u(re).
	\end{equation}
	Furthermore, by \cref{cor:equiv_prop3.3_from_clara,lem:h_comparable_sqrt_r}, we have $\varphi_{j_1}(2^{-j_1-1}e_n) \leq 3\sqrt{2^{-j_1-1}}
	\le 3\sqrt r$, which combined with \eqref{eq:bdry_harnack_at_2-j1-1} and \eqref{eq:harnack_chain_bound} yields
	\begin{equation}
		\label{eq:lower_bound_initial}
		u 
		\geq 
		\frac{1}{C} \frac{u(re)}{\sqrt{r}} \varphi_{j_1} 
		=: 
		c_{j_1} \varphi_{j_1} 
		\quad\text{in } B_{2^{-j_1-1}},
	\end{equation}
	where $C = C(n,\alpha_0,\beta_0)$.
	
	Then, using the lower bound of \cref{lem:varphi_k-1_comp_varphi_k} iteratively, we obtain that, for $k \geq j_1$, it holds
	\begin{equation} \label{eq:lower_bound_induction}
		u \geq c_k \varphi_k \quad \text{in } B_{2^{-k-1}},
	\end{equation}
	where $c_k = (1 - A \eps_{k-1}) \, c_{k-1}$ and $A = A(n, \alpha_0,\beta_0)$ is a large constant.
	
	Now, by choosing $\omega_0$ small enough, we can ensure that, for $j \geq j_0$, it holds
	\begin{align*}
		A \varepsilon_{j-1}
		&\leq 
		A C (\norm{\nabla\gamma}_{L^{\infty}(B''_{10\cdot2^{-j+1}})} + \norm{\nabla\Gamma}_{L^{\infty}(B'_{10\cdot2^{-j+1}})})
		\leq 
		A C \omega( 10\cdot2^{-j+1} )
		\\ &\leq 
		A C \omega (10 \cdot 2^{-j_0+1})
		\leq 
		A C \omega (2^{-j_0+5})
		\leq 
		A C \omega_0
		\leq 
		\frac12,
	\end{align*}
	where we have used \cref{def:C1_slit_domain} and the definition of $\varepsilon_k$ from \cref{lem:v-w_L_infty_bound}. 
	This bound implies that the elementary inequality $1 - t \geq 4^{-t}$ for $t \in [0, 1/2]$ can be used for $A\varepsilon_{j-1}$ to obtain
	\begin{equation*}
		c_{j_2-1}
		= 
		c_{j_1} \prod_{j = j_1}^{j_2-2} (1 - A \eps_j) 
		\geq 
		c_{j_1} 4^{-A \sum_{j = j_1}^{j_2-2} \eps_j}
		\geq 
		c_{j_1} \exp\left(-C \sum_{j = j_1}^{j_2-2} \eps_j \right).
	\end{equation*}
	Thus, recalling the choice of $j_2$, we get, using also \eqref{eq:lower_bound_induction}, \cref{cor:equiv_prop3.3_from_clara} and \cref{lem:h_comparable_sqrt_r}, 
	\begin{align*}
		u(\rho e) 
		\geq 
		c_{j_2-1} \varphi_{j_2-1}(\rho e) 
		\geq 
		c_{j_2-1} \left(3 \sqrt{\rho} \right)^{-\eps_{j_2}} \left( \kappa \sqrt{\rho}\right)^{1+\eps_{j_2}} 
		\geq 
		c_{j_1} \exp\left(-C \sum_{j = j_1}^{j_2-2} \eps_j \right)  \frac{\kappa^2}{3} \sqrt{\rho}.
	\end{align*}
	Taking into account that we also showed \eqref{eq:lower_bound_initial}, we are left to bound from above that sum of $\varepsilon_j$. For that, recalling \cref{def:C1_slit_domain} and the definition of $\varepsilon_k$ from \cref{lem:v-w_L_infty_bound}, and using that $\omega$ is monotone by definition of modulus of continuity, we estimate 
	\begin{align*}
		\sum_{j=j_1}^{j_2-2} \varepsilon_j
		& \leq 
		C \sum_{j=j_1}^{j_2-2} \omega(10 \cdot 2^{-j})
		\leq
		C \sum_{j=j_1-4}^{j_2-6} \omega(2^{-j}) 
		\\ & \leq 
		\frac{C}{\ln 2} \sum_{j=j_1-4}^{j_2-6} \int_{2^{-j}}^{2^{-j + 1}}  \omega(s) \frac{ds}{s} 
		\leq 
		\frac{C}{\ln 2} \int_{2^{-j_2+6}}^{2^{-j_1+5}} \omega(s) \frac{ds}{s} 
		\leq 
		\frac{C}{\ln 2} \int_{64\rho}^{64r} \omega(s) \frac{ds}{s},
	\end{align*}
	which is the estimate that we sought in \cref{th:lower_bound_thm}.

	
	\subsection{Proof of \cref{th:upper_bound_thm}}
	
	Let $\omega_0$ be a small constant to be determined later. By definition of modulus of continuity, we can find an index $j_0 \in \mathbb{N}$ such that $\omega(s), \omega_f(s), \omega_g(s) < \omega_0$ for all $s \in [0, 2^{-2j_0+6}]$. We will show that \cref{th:upper_bound_thm} holds with $r_0 \coloneqq 2^{-2j_0}$. As in the proof of \cref{th:lower_bound_thm}, fix $0 < \rho < r < r_0$ and let $j_1, j_2 \in \mathbb{N}$ be indices such that
	\begin{equation*}
		r \in (2^{-2j_1 -2}, 2^{-2j_1}] \quad \text{and} \quad \rho \in (2^{-2j_2 - 2}, 2^{-2j_2}],
	\end{equation*}
	so that it holds $j_0 \leq j_1 \leq j_2$. As in the proof of \cref{th:lower_bound_thm}, whenever $j_2 \leq j_1 + 1$, we have $r/16 \leq \rho \leq r$ and the result follows from interior Harnack. Therefore, we may assume $j_2 \geq j_1+2$ for the rest of the proof.
	
	Replacing $u$ with the function $v \coloneqq u - g(0) - \nabla g(0) \cdot x'$, which satisfies that $-\Delta v = f$ and $v = \tilde{g}$ on $\partial\Omega \cap B_1$ with $\tilde{g}(0) = 0$ and $\nabla \tilde{g}(0) = 0$, we may assume without loss of generality that $g(0) = 0$ and $\nabla g(0) = 0$.
	
	For $k \geq j_0$, let $\varphi_k \coloneqq \varphi_{2^{-2k}}$ and $\eps_k \coloneqq C(\norm{\nabla\gamma}_{L^{\infty}(B_{10\cdot2^{-2k}})} + \norm{\nabla\Gamma}_{L^{\infty}(B_{10\cdot2^{-2k}})})$ be defined as in \cref{cor:equiv_prop3.3_from_clara}. First, our aim is to prove by induction the following claim for all $k$ from $j_1 + 1$ to $j_2$:
	\begin{equation*}
		|u| \leq c_k \varphi_k + 2^{-k} d_k \quad \text{in } \Omega \cap B_{2^{-2k-1}},
	\end{equation*}
	where
	\begin{equation*}
		c_k \coloneqq (1 + A \eps_{k-1})c_{k-1} + Ad_{k-1},
		\qquad
		d_k \coloneqq \omega_g(2^{-2k}) + C \omega_f(2^{-2k}) ,
	\end{equation*}
	and $A$ is a large constant depending only on $n$ that will be fixed later.
	
	\begin{enumerate}[wide,label=\textbf{Step \arabic*.},ref=Step \arabic*]
		\item\label{it:upper_bound_th_step1} Let us start with the base case of the induction. Let $\tilde{u}_{j_1 + 1}$ be the solution to
		\begin{equation*}
			\left\{
			\begin{aligned}
				\Delta \tilde{u}_{j_1 + 1} &= 0    &&\text{in } \Omega \cap B_{2^{-2j_1 -2}}, \\
				\tilde{u}_{j_1 + 1}        &= |u|  &&\text{on } \Omega \cap \partial B_{2^{-2j_1 - 2}}, \\
				\tilde{u}_{j_1 +1} &= 0           &&\text{on } \partial \Omega \cap B_{2^{-2j_1 - 2}}.
			\end{aligned}
			\right.
		\end{equation*}
		Then, by the boundary Harnack inequality (\cref{th:bdry_harnack}), and arguing as in  \cref{th:lower_bound_thm}, 
		\begin{equation}\label{eq:upper_bound_th_base_case}
			\tilde{u}_{j_1 +1} \leq c_{j_1+1} \varphi_{j_1+1} \quad\text{in } \Omega \cap B_{2^{-2j_1-3}},
		\end{equation}
		where
		\begin{equation*}
			c_{j_1+1} \coloneqq C' \frac{\norm{\tilde{u}_{j_1 + 1}}_{L^\infty(\Omega \cap B_{2^{-2j_1 -2}})}}{\sqrt{2^{-2j_1 -2}}}\leq C\frac{\norm{u}_{L^\infty(B_r)}}{\sqrt{r}}.
		\end{equation*}
		Now, consider the difference $u - \tilde{u}_{j_1+1}$ and note that
		\begin{equation*}
			\left\{
			\begin{aligned}
				-\Delta(u - \tilde{u}_{j_1+1}) &= f \leq |f| &&\text{in } \Omega \cap B_{2^{-2j_1-2}}, \\
				u - \tilde{u}_{j_1+1} &= u -|u| \leq 0        &&\text{on } \Omega \cap \partial B_{2^{-2j_1-2}}, \\
				u - \tilde{u}_{j_1+1} &= g                    &&\text{on } \partial \Omega \cap B_{2^{-2j_1-2}},
			\end{aligned}
			\right.
		\end{equation*}
		and hence, we can apply \cref{th:GilbargTrudinger_8.16} to the function $u - \tilde{u}_{j_1+1}$ to obtain
		\begin{equation*}
			u - \tilde{u}_{j_1+1} \leq  \| g\|_{L^\infty(B_{2^{-2j_1-2}})} + C 2^{-2j_1-2} \|f\|_{L^n(\Omega \cap {B_{2^{-2j_1-2}}})} \leq 2^{-2j_1-2}d_{j_1+1} \leq 2^{-j_1-1}d_{j_1+1},
		\end{equation*}
		where we have used that $g(0) = 0$ and $\nabla g(0) = 0$ to bound
		$\|g\|_{L^\infty(B_{2^{-2j_1-2}})} \leq 2^{-2j_1-2}\omega_g(2^{-2j_1-2})$.
		
		Repeating the above argument with the function $-u-\tilde{u}_{j_1+1}$ (which in this case satisfies $-\Delta(-u - \tilde{u}_{j_1+1}) = -f \leq |f|$) we get
		\begin{equation*}
			-u - \tilde{u}_{j_1+1} \leq  \| g\|_{L^\infty(B_{2^{-2j_1-2}})} + C 2^{-2j_1-2} \|f\|_{L^n(\Omega \cap {B_{2^{-2j_1-2}}})} \leq 2^{-j_1-1}d_{j_1 + 1},
		\end{equation*}
		and combining both estimates with \eqref{eq:upper_bound_th_base_case} we conclude
		\begin{equation*}
			|u| \leq c_{j_1+1} \varphi_{j_1+1} +  2^{-j_1-1}d_{j_1+1}
			\quad\text{in } \Omega\cap B_{2^{-2j_1 -3}}.
		\end{equation*}
		\item\label{it:upper_bound_th_step2} Now we proceed with the induction step. Assume the claim holds at step $k-1$, i.e.,
		\begin{equation}\label{eq:upper_bound_thm_ind_hyp}
			|u| \leq c_{k-1}\varphi_{k-1} + 2^{-k+1}d_{k-1} \quad \text{in } \Omega \cap B_{2^{-2k+1}},
		\end{equation}
		and let $\tilde{u}_k$ be the solution to
		\begin{equation*}
			\left\{
			\begin{aligned}
				\Delta\tilde{u}_k &= 0 &&\text{in } \Omega \cap B_{2^{-2k+1}}, \\
				\tilde{u}_k &= |u|     &&\text{on } \Omega \cap \partial B_{2^{-2k+1}}, \\
				\tilde{u}_k &= 0       &&\text{on } \partial \Omega \cap B_{2^{-2k+1}}.
			\end{aligned}
			\right.
		\end{equation*}
		Then, by the comparison principle $\tilde{u}_k \leq c_{k-1}\varphi_{k-1} + 2^{-k+1}d_{k-1}$ in $\Omega \cap B_{2^{-2k+1}}$. Hence, the boundary Harnack inequality \cref{th:bdry_harnack} applied to $\tilde{u}_k - c_{k-1}\varphi_{k-1}$ and $\varphi_{k-1}$ yields, keeping \cref{rmk:bdry_harnack_neg} in mind,
		\begin{equation*}
			\frac{\tilde{u}_k - c_{k-1}\varphi_{k-1}}{\varphi_{k-1}}
			\leq C\frac{\|\tilde{u}_k - c_{k-1}\varphi_{k-1}\|_{L^\infty(\Omega \cap B_{2^{-2k+1}})}}{\varphi_{k-1}(2^{-2k}e_{n-1})}
			\leq C\frac{2^{-k+1}d_{k-1}}{\varphi_{k-1}(2^{-2k}e_{n-1})},
			\quad \text{in } \Omega \cap B_{2^{-2k}},
		\end{equation*}
		where the constant $C$ depends only on the dimension $n$. Further, since
		\begin{equation}\label{eq:eps_modulus_upper_bound}
			\eps_{k-1} \leq C\omega(10\cdot2^{-2k+2}) \leq C\omega(2^{-2j_0+6}) \leq C\omega_0
		\end{equation}
		by definition of $\eps_{k-1}$ and \cref{def:C1_slit_domain}, choosing $\omega_0 \leq (2C)^{-1}$ we have, by \cref{cor:equiv_prop3.3_from_clara,lem:h_comparable_sqrt_r}, that
		\begin{equation*}
			\varphi_{k-1}(2^{-2k}e_{n-1}) \geq (3\cdot2^{-k})^{\eps_{k-1}} h(2^{-2k}e_{n-1})^{1-\eps_{k-1}} \geq \kappa^{1/2} 2^{-k},
		\end{equation*}  
		and therefore,
		\begin{equation*}
			\frac{\tilde{u}_k - c_{k-1}\varphi_{k-1}}{\varphi_{k-1}}
			\leq C\frac{2^{-k+1}d_{k-1}}{\varphi_{k-1}(2^{-2k}e_{n-1})}
			\leq Cd_{k-1},
			\quad\text{in } \Omega \cap B_{2^{-2k}}.
		\end{equation*}
		Consequently, the upper bound of \cref{lem:varphi_k-1_comp_varphi_k} (applied twice, with indices $2k$ instead of $k$) implies that, in $\Omega \cap B_{2^{-2k-1}}$,
		\begin{align*}
			\tilde{u}_k
			&\leq (c_{k-1} + C d_{k-1})\varphi_{k-1} \\
			&\leq (c_{k-1} + C d_{k-1})(1 + A\eps_{k-1})^2\varphi_k \\
			&\leq \left((1 + A\eps_{k-1})c_{k-1} + C(1 + A\eps_{k-1})d_{k-1}\right)\varphi_k \\
			&\leq \left( (1 + A\eps_{k-1})c_{k-1} + Ad_{k-1}\right) \varphi_k,
		\end{align*}
		provided that $\eps_{k-1}$ is small enough, which, recalling \eqref{eq:eps_modulus_upper_bound}, can be achieved by choosing $\omega_0$ sufficiently small. Also, the constant $A$ depends only on $n$ since we are using the direction $e_{n-1}$ (see \cref{rmk:const_A_depend}). Lastly, we use once again the same comparison argument we used for $\tilde{u}_{j_1+1}$ in \cref{it:upper_bound_th_step1} to obtain
		\begin{equation*}
			|u| \leq c_k \varphi_k + 2^{-k}d_k
			\quad \text{in } \Omega \cap B_{2^{-2k-1}},
		\end{equation*}
		and finish the induction step.
		
		\item The induction argument shows, in particular, that at the target scale $j_2-1$,
		\begin{equation}
			\label{eq:target_scale_inequality}
			|u| \leq c_{j_2-1} \varphi_{j_2-1} + 2^{-j_2+1}d_{j_2-1} \quad \text{in } \Omega \cap B_{\rho}.
		\end{equation}
		Unrolling the recurrence for $c_{j_2-1}$ and using the elementary inequality $1+t \leq e^t$ gives us
		\begin{equation*}
			c_{j_2-1} = c_{j_1+1} \prod_{j=j_1 +1}^{j_2-2} (1 + A \eps_j) + A \sum_{i=j_1 +1}^{j_2-2} \left[d_i\prod_{j=i+1}^{j_2-2}(1 + A \eps_j) \right]
			\leq \left(c_{j_1 +1} + A \sum_{i=j_1 +1}^{j_2-1} d_i\right)  e^{A \sum_{j=j_1 +1}^{j_2-2} \eps_j},
		\end{equation*}
		which, inserted back in \eqref{eq:target_scale_inequality} and combined with $\varphi_{j_2-1} \leq (3\sqrt{\rho})^{\eps_{j_2-1}} h^{1-\eps_{j_2-1}} \leq 3\sqrt{\rho}$ (by \cref{cor:equiv_prop3.3_from_clara,lem:h_comparable_sqrt_r}), yields
		\begin{equation*}
			\norm{u}_{L^\infty(B_\rho)} \leq 3\sqrt{\rho}\left(c_{j_1+1} + A \sum_{i=j_1+1}^{j_2-1} d_i\right)  e^{A \sum_{j=j_1+1}^{j_2-2} \eps_j} + 2^{-2j_2+2}d_{j_2-1}.   
		\end{equation*}
		Moreover, since $2^{-j_2+1}d_{j_2-1} \leq 4\sqrt{\rho} d_{j_2-1}$, the last term can be absorbed in the sum to get
		\begin{equation}
			\label{eq:u_upper_bounded_by_sum}
			\frac{\norm{u}_{L^\infty(B_\rho)}}{\sqrt{\rho}} \leq C \left(c_{j_1+1} + A \sum_{i=j_1+1}^{j_2-1} d_i\right)  e^{A \sum_{j=j_1+1}^{j_2-2} \eps_j}.
		\end{equation}
		
		Similarly as in the proof of \cref{th:lower_bound_thm}, recalling \cref{def:C1_slit_domain}, the definition of $\eps_k$ and the monotonicity of the modulus of continuity $\omega$, the sum of $\eps_j$ can be bounded as follows:
		\begin{equation}\label{eq:upper_bound_th_eps_bound}
			\sum_{j=j_1}^{j_2-2} \varepsilon_j
			\leq C \sum_{j=j_1}^{j_2-2} \omega(10\cdot2^{-2j})
			\leq \frac{C}{\ln 4} \sum_{j=j_1-2}^{j_2-4} \int_{2^{-2j}}^{2^{-2j+2}} \omega(s) \frac{ds}{s}
			\leq \frac{C}{\ln 4} \int_{256\rho}^{256r} \omega(s) \frac{ds}{s}.
		\end{equation}
		Also, recalling that $d_i \coloneqq \omega_g(2^{-2i}) + C \omega_f(2^{-2i})$, we have in the same way
		\begin{equation}\label{eq:upper_bound_th_dj_bound}
			\sum_{j=j_1+1}^{j_2-1} d_j
			\leq C \sum_{j=j_1}^{j_2-1} (\omega_g(2^{-2j}) + \omega_f(2^{-2j}))
			\leq \frac{C}{\ln 4} \int_{4\rho}^{4r} \big( \omega_g(s) + \omega_f(s) \big) \frac{ds}{s}.
		\end{equation}
		Substituting \eqref{eq:upper_bound_th_eps_bound} and \eqref{eq:upper_bound_th_dj_bound} back in \eqref{eq:u_upper_bounded_by_sum} we obtain
		\begin{equation} \label{eq:second_thm_first_part}
			\frac{\| u \|_{L^\infty(B_\rho)}}{\sqrt{\rho}} 
			\leq 
			C \left(\frac{\norm{u}_{L^\infty(B_r)}}{\sqrt{r}}
			+ \int_{4\rho}^{4r} (\omega_g(s) + \omega_f(s)) \frac{ds}{s} \right) \exp \left(C \int_{256\rho}^{256r} \omega(s) \frac{ds}{s} \right),
		\end{equation}
		which completes the proof of the first part of the \cref{th:upper_bound_thm}, because the normalizations we performed at the beginning of the proof make $u$ coincide with $v$ as in the statement.
	\end{enumerate}
	
	\subsection{Proof of \cref{cor:boundary_regularity}}
	
	The $C^{\tilde{\omega}}$ regularity of $u$ can be proved as in \cite[Theorem 1.2]{ClaraC1}, but using \cref{th:upper_bound_thm} instead.


	\appendix
	
	\tocless\section{The second derivative of the inverse function \label{sec:sec_ord_IFT}}
	
	Let $U, V \subset \R^n$ be open sets. Assume $P:U \rightarrow V$, $Q: V \rightarrow U$ are inverse functions in these sets, which are additionally $C^2$. Then, for every $y \in V$, it holds $P(Q(y))=y$, so $P_a(Q(y)) = y_a$ for every $a=1, \ldots, n$ (sub-indices refer to components of functions). Differentiating with respect to $y_j$ yields
	\begin{equation*}
		\sum_b \partial_{x_b} P_a (Q(y)) \, \partial_{y_j} Q_b(y) = \delta_{j, a},
	\end{equation*}
	where $\delta$ denotes the Kronecker symbol. 
	Differentiating again, this time with respect to $y_k$, gives
	\begin{align*}
		0
		& =
		\sum_b \partial_{y_k} \left( \partial_{x_b} P_a (Q(y)) \right) \partial_{y_j} Q_b(y)
		+ \sum_b \partial_{x_b} P_a (Q(y)) \, \partial_{y_j y_k} Q_b(y)
		\\ & =
		\sum_{b, c} \partial_{x_b x_c} P_a (Q(y)) \, \partial_{y_k} Q_c(y) \, \partial_{y_j} Q_b(y)
		+ \sum_b \partial_{x_b} P_a (Q(y)) \, \partial_{y_j y_k} Q_b(y).
	\end{align*}
	Therefore, multiplying by $\partial_{y_a} Q_\ell(y)$ for $a, \ell \in \{1, \ldots, n\}$, and summing over $a$ yields
	\begin{equation} \label{eq:IFT_2}
		\sum_{a, b} \partial_{y_a} Q_\ell(y) \, \partial_{x_b} P_a (Q(y)) \, \partial_{y_j y_k} Q_b(y)
		=
		- \sum_{a, b, c}  \partial_{y_a} Q_\ell(y) \, \partial_{x_b x_c} P_a (Q(y)) \, \partial_{y_j} Q_b(y) \, \partial_{y_k} Q_c(y).
	\end{equation}
	
	To simplify this identity, we use the fact that $Q(P(x)) = x$ for every $x \in U$, so $Q_\ell(P(x)) = x_\ell$ for every $\ell \in \{1, \ldots, n\}$, and differentiating with respect to $x_b$ gives
	\begin{equation*}
		\sum_a \partial_{y_a} Q_\ell (P(x)) \, \partial_{x_b} P_a (x) = \delta_{\ell, b}.
	\end{equation*}
	Hence, putting $x = Q(y)$, we can simplify the left hand side of \eqref{eq:IFT_2}:
	\begin{multline*}
		\sum_{a, b} \partial_{y_a} Q_\ell(y) \, \partial_{x_b} P_a (Q(y)) \, \partial_{y_j y_k} Q_b(y)
		=
		\sum_b \left( \sum_a \partial_{y_a} Q_\ell (y) \, \partial_{x_b} P_a (Q(y)) \right) \partial_{y_j y_k} Q_b(y)
		\\ =
		\sum_b \delta_{\ell, b} \, \partial_{y_j y_k} Q_b(y)
		=
		\partial_{y_j y_k} Q_\ell(y),
	\end{multline*}
	which recalling \eqref{eq:IFT_2} implies that, for every $j,k,\ell \in \{1, \ldots, n\}$,
	\begin{equation*}
		\partial_{y_j y_k} Q_\ell(y)
		=
		- \sum_{a, b, c}  \partial_{y_a} Q_\ell(y) \, \partial_{x_b x_c} P_a (Q(y)) \, \partial_{y_j} Q_b(y) \, \partial_{y_k} Q_c(y).
	\end{equation*}

\end{document}